\def\blfootnote{\xdef\@thefnmark{}\@footnotetext}
\newcommand\ccnote{
    \blfootnote{\copyright\,\, William M. Feldman}
    \blfootnote{\ccLogo\, \ccAttribution\,\, Licensed under a \href{https://creativecommons.org/licenses/by/4.0/}{Creative Commons Attribution License (CC-BY)}.}
}
\numberwithin{equation}{section}
\renewcommand{\leq}{\leqslant}
\renewcommand{\geq}{\geqslant}
\renewcommand{\mathbb}{\varmathbb}
\newtheorem{theorem}{Theorem}[section]
\newtheorem{lemma}[theorem]{Lemma}
\newtheorem{corollary}[theorem]{Corollary}
\newtheorem{proposition}[theorem]{Proposition}
\newtheorem{remark}[theorem]{Remark}
\newcommand{\eref}[1]{(\ref{e.#1})}
\newcommand{\tref}[1]{Theorem \ref{t.#1}}
\newcommand{\lref}[1]{Lemma \ref{l.#1}}
\newcommand{\pref}[1]{Proposition \ref{p.#1}}
\newcommand{\cref}[1]{Corollary \ref{c.#1}}
\newcommand{\sref}[1]{Section \ref{s.#1}}
\newcommand{\rref}[1]{Remark \ref{r.#1}}
\newcommand{\aref}[1]{Assumption \ref{a.#1}}
\renewcommand{\Box}{\square}
\newcommand{\Z}{\mathbb{Z}}
\newcommand{\R}{\mathbb{R}}
\newcommand{\grad}{\nabla}
\def\Xint#1{\mathchoice
{\XXint\displaystyle\textstyle{#1}}%
{\XXint\textstyle\scriptstyle{#1}}%
{\XXint\scriptstyle\scriptscriptstyle{#1}}%
{\XXint\scriptscriptstyle\scriptscriptstyle{#1}}%
\!\int}
\def\XXint#1#2#3{{\setbox0=\hbox{$#1{#2#3}{\int}$ }
\vcenter{\hbox{$#2#3$ }}\kern-.6\wd0}}
\def\dashint{\Xint-}
\newcommand{\ep}{\varepsilon}
\address{William M Feldman, University of Utah, Department of Mathematics}
\email{feldman@math.utah.edu}
\begin{document}

\thispagestyle{empty}

\begin{minipage}{0.28\textwidth}
\begin{figure}[H]
%\centering
\includegraphics[width=2.5cm,height=2.5cm,left]{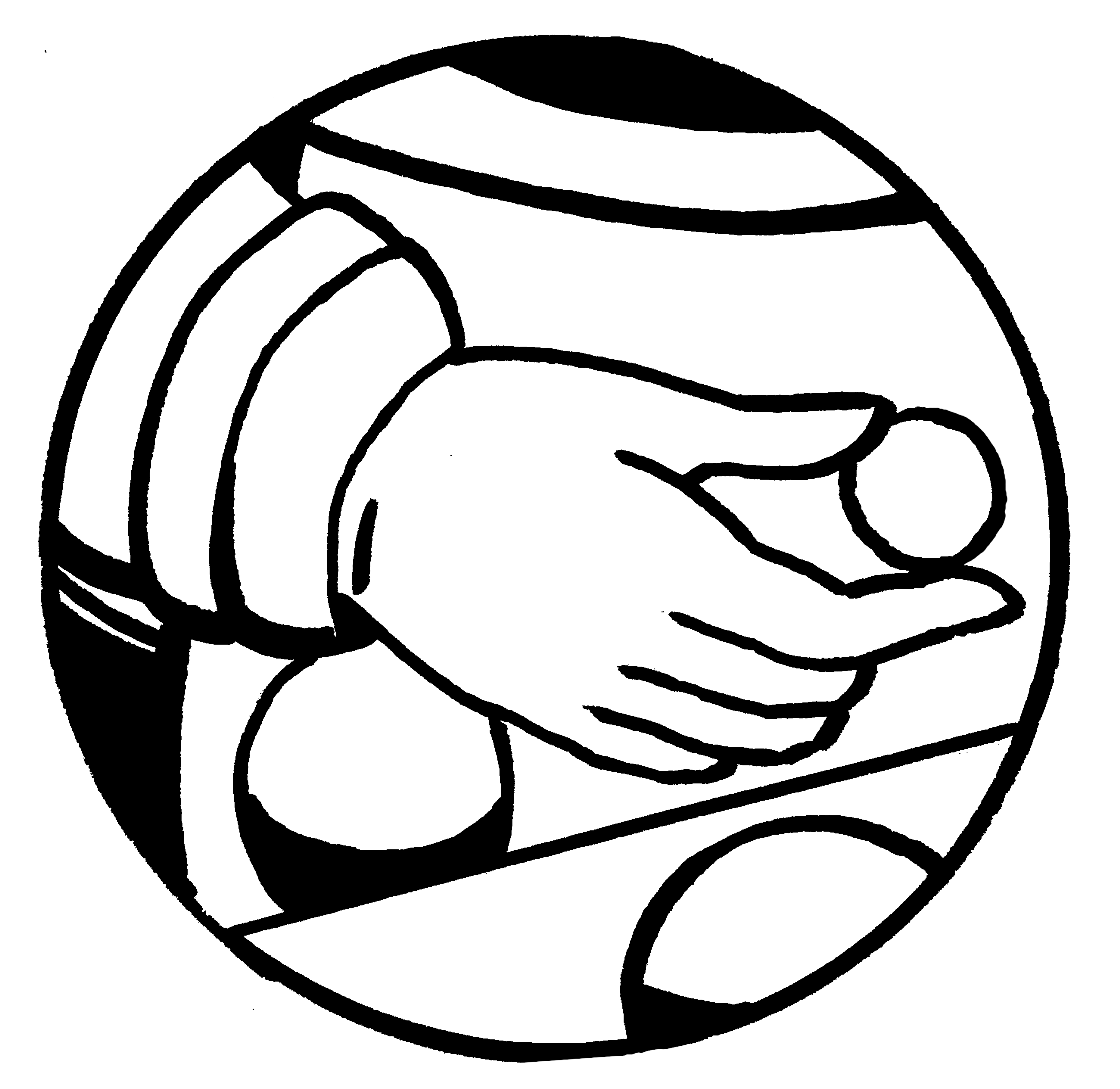}
\end{figure}
\end{minipage}
\begin{minipage}{0.7\textwidth} 
\begin{flushright}
%% The following metadata, in particular
%% the Paper No. and the DOI will be inserted by the journal
Ars Inveniendi Analytica (2023), Paper No. 2, 41 pp.
\\
DOI 10.15781/52x3-ja93
\\
ISSN: 2769-8505
\end{flushright}
\end{minipage}

\ccnote

\vspace{1cm}

%%      -------------------------------------------------------------------------------
%%      -------------------------- TITLE ----------------------------
%%      -------------------------------------------------------------------------------
%% Authors, please put here the full title of the article

\begin{center}
\begin{huge}
\textit{Large scale regularity of almost minimizers of the one-phase problem in periodic media}

\end{huge}
\end{center}

\vspace{1cm}

%%      -------------------------------------------------------------------------------
%%      -------------------------- AUTHORS AND AFFILIATIONS ----------------------------
%%      -------------------------------------------------------------------------------
%% Authors, please put here your full names and affiliations
\begin{center}
\begin{minipage}[t]{.28\textwidth}
\begin{center}
{\large{\bf{William M Feldman}}} \\
\vskip0.15cm
\footnotesize{University of Utah}
\end{center}
\end{minipage}
\end{center}

\vspace{1cm}

\begin{center}
\noindent \em{Communicated by Scott Armstrong}
\end{center}
\vspace{1cm}

%%      -------------------------------------------------------------------------------
%%      -------------------------- BEGIN ABSTRACT ----------------------------
%%      -------------------------------------------------------------------------------
%% Authors, please put here the ABSTRACT and KEYBOARDS

\noindent \textbf{Abstract.} \textit{We prove that minimizers and almost minimizers of one-phase free boundary energy functionals in periodic media satisfy large scale (1) Lipschitz estimates (2) free boundary flat implies Lipschitz estimates.  The proofs are based on techniques introduced by De Silva and Savin~\cite{DeSilvaSavin} for almost minimizers in homogeneous media.}
\vskip0.3cm

\noindent \textbf{Keywords.}  Homogenization, free boundary, large scale regularity, almost minimizers. 
\vspace{0.5cm}

%%      -------------------------------------------------------------------------------
%%      -------------------------- BEGIN ARTICLE ----------------------------
%%      -------------------------------------------------------------------------------
%% Authors, copy the body of your paper here

\section{Introduction}

We consider minimizers of the one phase free boundary energy functional in a domain $U \subset \R^d$:
\begin{equation}\label{e.energy1}
J(u,U) = \int_U \grad u \cdot a(x)\grad u + Q(x)^2{\bf 1}_{\{u>0\}} \ dx.
\end{equation}
Here the heterogeneous media $Q: \R^d \to (0,\infty)$ and $a: \R^d \to M_{d \times d}^{sym}(\R)$ are $\Z^d$-periodic measurable functions, sufficiently regular (to be specified), satisfying the ellipticity assumptions, for some $\Lambda > 1$,
\begin{equation}\label{e.a-Q-ellipticity-intro}
\Lambda^{-1} I\leq a(x) \leq \Lambda I \ \hbox{ and } \ \Lambda^{-1} \leq Q(x) \leq \Lambda.
\end{equation}
 Consider the domain $U$ to be very large compared to the unit period scale associated with the coefficients.  

The goal of this paper is to prove that almost minimizers of the energy functional $J$ satisfy a Lipschitz estimate and a large scale ``flat implies Lipschitz" estimate for the free boundary $\partial \{u>0\}$.  The results are also new for minimizers, but the generality of almost minimizers is useful for applications.  In a forthcoming paper \cite{FeldmanShape} we will apply these regularity results to obtain quantitative estimates for homogenization rate of a shape optimization problem.  

This follows the now classical Avellaneda and Lin~\cite{AvellanedaLin} idea of inheriting the $C^{1,\alpha}$ regularity iteration from a homogenized problem.  The proof follows the recent quantitative strategy for large scale regularity introduced by Armstrong and Smart \cite{ArmstrongSmart}.  In terms of free boundary regularity theory we are using the very powerful ideas introduced by De Silva and Savin in \cite{DeSilvaSavin}.

Local minimizers of \eref{energy1} satisfy the Euler-Lagrange equation
\begin{equation}\label{e.epeqn}
\begin{cases}
- \grad \cdot(a(x) \grad u) = 0 & \hbox{in } \{u>0\}\cap U\\
n\cdot a(x)n |\grad u|^2 = Q(x)^2 & \hbox{on } \partial \{u>0\} \cap U
\end{cases}
\end{equation}
which is called a one-phase Bernoulli-type free boundary problem. We will study global (almost) energy minimizers which satisfy the additional property that
\[ J(u,B_r) \leq (1+(r/R_0)^\beta)J(v,B_r) \]
for any $B_r \subset U$ and $v \in u+H^1_0(B_r)$.  Here $R_0 \gg 1$ introduces an additional length scale to the problem associated with the almost minimality property, it is a slow scale compared to the unit scale oscillations of the coefficients.  This includes the case of minimizers when $R_0 = +\infty$.

In the large scale limit there is $\Gamma$-convergence to an effective energy functional
\[ J_0(u,U) = \int_U \grad u \cdot \bar{a} \grad u + \langle Q^2 \rangle{\bf 1}_{\{u>0\}} \ dx.\] 
  Here $\langle \cdot \rangle$ is the average over a period cell, and $\bar{a}$ is the effective matrix from classical divergence form elliptic homogenization theory.  The Euler-Lagrange equation associated with critical points of this functional is the classical Bernoulli-type free boundary problem
\begin{equation}\label{e.homeqn}
\begin{cases}
- \grad \cdot (\bar{a} \grad u)= 0 & \hbox{in } \{u>0\}\cap U\\
n\cdot \bar{a}n |\grad u|^2 = \langle Q^2 \rangle& \hbox{on } \partial \{u>0\} \cap U.
\end{cases}
\end{equation}
The free boundary problem \eref{homeqn} has a well developed regularity theory for minimizers and more general solutions of the Euler-Lagrange equation.  In higher dimensions the free boundary may have singular points, but if the free boundary is sufficiently flat in some ball $B_r$ then it will be smooth in $B_{r/2}$.  The theory of almost minimizers of $J_0$ has been developed more recently \cite{DeSilvaSavin,DavidEngelsteinToro,DavidToro}.  We will show that this type of flat implies smooth result for almost minimizers of the effective energy $J_0$ is inherited by almost minimizers of the heterogeneous energy $J$.

The structure of the paper follows the Armstrong and Smart \cite{ArmstrongSmart} quantitative approach to large scale regularity.  First we prove a suboptimal quantitative homogenization result, then we employ the quantitative homogenization result within a $C^{1,\alpha}$ iteration to achieve a Lipschitz estimate of the free boundary.  In order to establish the suboptimal quantitative homogenization result we need to prove some initial regularity of the free boundary.

In this direction, our first main result is a large scale Lipschitz estimate for almost minimizers of $J$.  There are many proofs of the Lipschitz estimate for Bernoulli-type problems, see the book of Velichkov \cite{VelichkovNotes}. Despite some effort the only proof which we were able to adapt to homogenization setting, even for minimizers, is the recent proof of De Silva and Savin \cite{DeSilvaSavin}.  Our argument closely follows theirs with some inputs from homogenization theory.
\begin{theorem}[Large scale Lipschitz estimate]\label{t.main1}
Suppose that $a$ and $Q$ satisfy \eref{a-Q-ellipticity-intro}, $a$ is $\Z^d$-periodic, $1 \leq R \leq c(d,\Lambda,\beta)R_0$, and $u \in H^1(B_R(0))$ satisfies, for all $0 \leq r \leq R$,
\[ J(u,B_r(0)) \leq (1+(r/R_0)^\beta)J(v,B_r(0))\]
 where $v \in u + H^1_0(B_r)$ is the $a$-harmonic replacement.  Then for all $1 \leq r \leq R$
\[ \|\grad u\|_{{\underline{L}}^2(B_r)} \leq C(d,\Lambda)(1+ \|\grad u\|_{\underline{L}^2(B_R)}).\]
If, additionally, $a \in C^{0,\gamma}$ then
\[ |\grad u(0)| \leq C(d,\Lambda,\gamma,[a]_{C^{0,\gamma}})(1+ \|\grad u\|_{\underline{L}^2(B_R)}).\]
\end{theorem}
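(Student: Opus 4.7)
The plan is to mimic the De Silva--Savin Lipschitz argument for almost minimizers, with two homogenization inputs. First, the natural comparison function on each ball $B_r$ is the $a$-harmonic replacement $v = v_r$ (rather than the ordinary harmonic replacement), since almost minimality against $v$ decouples cleanly thanks to $a$-orthogonality: $\int_{B_r} a \grad(u-v) \cdot \grad(u-v) = \int_{B_r} a\grad u\cdot\grad u - \int_{B_r} a \grad v \cdot \grad v$. Second, at scales $r \geq 1$, the Avellaneda--Lin large-scale $C^{0,1}$ estimate for the periodic operator $-\grad\cdot(a\grad \cdot)$ gives $\|\grad v\|_{L^\infty(B_{r/2})} \leq C(d,\Lambda)\|\grad v\|_{\underline{L}^2(B_r)} \leq C\|\grad u\|_{\underline L^2(B_r)}$, so the De Silva--Savin step in which the comparison function is treated as Lipschitz at the next scale carries through unchanged above the unit scale. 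Let $\phi(r) := \|\grad u\|_{\underline{L}^2(B_r)}$.

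\textbf{Key comparison and iteration.} Expanding the almost minimality hypothesis $J(u,B_r) \leq (1+(r/R_0)^\beta)J(v,B_r)$ and using $J(v,B_r) \leq C(\Lambda)(\|\grad u\|_{L^2(B_r)}^2 + r^d)$ (since $v$ minimizes the $a$-Dirichlet energy and $Q \leq \Lambda$), the orthogonality relation yields the Caccioppoli-type bound
\[ \int_{B_r}|\grad(u-v)|^2 \leq C(d,\Lambda)\bigl[(r/R_0)^\beta(\phi(r)^2 + 1) r^d + |B_r \cap \{v>0\}\setminus\{u>0\}|\bigr]. \]
I would then plug into the De Silva--Savin dichotomy: either $u$ is close enough to $v$ on $B_{r/2}$ that the Avellaneda--Lin bound on $\grad v$ transfers, producing a one-step estimate $\phi(r/2) \leq (1 + C(r/R_0)^\beta)\phi(r) + C$, or the positive set deficit is so large that $\phi(r)$ is already controlled by a universal constant. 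Iterating the first alternative over dyadic scales from $R$ down to $r$, the cumulative multiplicative error is $\prod_{k} (1+C(2^{-k}R/R_0)^\beta) \leq \exp(C(R/R_0)^\beta)$, which is bounded by $2$ provided $R \leq c(d,\Lambda,\beta) R_0$; this produces $\phi(r) \leq C(1 + \phi(R))$.

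\textbf{Main obstacle and pointwise estimate.} The genuine difficulty lies in controlling the positive-set deficit $|B_r\cap\{v>0\}\setminus\{u>0\}|$ and converting the Caccioppoli bound into the sharp dichotomy above. In the homogeneous De Silva--Savin argument this uses nondegeneracy and a density estimate for the positive phase of an almost minimizer; I would verify that the proof of these density facts is purely comparison-based and only uses the ellipticity bounds \eref{a-Q-ellipticity-intro}, so it passes to the periodic setting with constants depending only on $d$ and $\Lambda$. For the second (pointwise) assertion, I apply the just-proved $\underline L^2$ estimate at the scale $r = 1$ to obtain $\|\grad u\|_{\underline{L}^2(B_1)} \leq C(1+\|\grad u\|_{\underline{L}^2(B_R)})$, and then invoke the small-scale Lipschitz estimate for almost minimizers of the one-phase problem with $C^{0,\gamma}$ coefficients (essentially the original De Silva--Savin theorem combined with Schauder estimates for $-\grad\cdot(a\grad\cdot)$), which gives $|\grad u(0)| \leq C(d,\Lambda,\gamma,[a]_{C^{0,\gamma}})(1+\|\grad u\|_{\underline{L}^2(B_1)})$, completing the claim.
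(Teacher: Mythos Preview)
Your overall plan---De Silva--Savin with $a$-harmonic replacement and Avellaneda--Lin regularity as the homogenization input---is the same as the paper's. But the iteration you outline does not work, and the fix is exactly the structural piece you are missing.

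The one-step estimate $\phi(r/2) \leq (1+C(r/R_0)^\beta)\phi(r)+C$ is not available. Even in the ideal case $u=v$, the Avellaneda--Lin \emph{Lipschitz} bound only gives $\|\grad v\|_{L^\infty(B_{r/2})}\leq C_0\|\grad v\|_{\underline L^2(B_r)}$ with a universal $C_0>1$; there is no mechanism to bring that constant down to $1+o(1)$, so your dyadic product blows up. What the paper actually uses is the Avellaneda--Lin \emph{$C^{1,\alpha}$} estimate: $v$ in $B_r$ is $C\eta^\alpha$-close in $\underline L^2$ gradient to a \emph{corrected linear} function $q\cdot x+\chi_q(x)$ in $B_{\eta r}$. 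This sets up the real dichotomy (Lemma~\ref{l.DSLIPP1}): either $|q|$ is small compared to $\phi(r)$, in which case $\phi(\eta r)\leq\tfrac12\phi(r)$, or $|q|\sim\phi(r)$, in which case $\|\grad u-(q+\grad\chi_q)\|_{\underline L^2(B_{\eta r})}\leq\delta|q|$. In the second case one enters a separate \emph{interior-type} $C^{1,\alpha}$ iteration (Lemmas~\ref{l.DSLIPP2}--\ref{l.DSinteriorLIP}) where the slope $q_k$ stays pinned near $q$ while $\delta_k\to 0$, yielding $\phi\lesssim|q|$ at all smaller scales. The $\tfrac12$-contraction in the first branch and the slope-pinning in the second are what replace your $(1+\text{small})$ factor.

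Your treatment of the set $\{u=0\}$ is also off. First, since $v>0$ in $B_r$ by the strong maximum principle, your deficit $|\{v>0\}\setminus\{u>0\}|$ is simply $|\{u=0\}\cap B_r|$; bounding it by $|B_r|$ gives $\|\grad(u-v)\|_{\underline L^2}^2\leq C(\sigma\phi(r)^2+1)$ and no dichotomy. Second, invoking density estimates here is circular: in this paper (and in De Silva--Savin) density and nondegeneracy are proved \emph{after} the Lipschitz estimate and use it. The paper's control of $|\{u=0\}|$ appears only inside the second branch of the dichotomy, and comes from a Poincar\'e--Sobolev argument: if $\grad u$ is $\delta|q|$-close to $q+\grad\chi_q$ with $|q|$ large, then $u$ is close to a corrected plane that is bounded below by $c|q|r$ on $B_{r/2}$, forcing $|\{u=0\}\cap B_{r/2}|/|B_{r/2}|\leq C\delta^{2+\beta}$.

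Your final paragraph on the pointwise bound is correct and matches the paper: freeze coefficients at the center, check that $u$ is an almost minimizer for the frozen functional with error $O(r^\gamma)$, and apply the original De Silva--Savin theorem on $B_1$.
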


Note that $Q$ does not need to be periodic for this result, the upper and lower bounds are enough.  Periodicity of $a$ is of course important, interior Lipschitz estimates do not hold for general bounded elliptic coefficient fields.

The other basic regularity estimates of free boundary theory require the Lipschitz estimate and then follow by natural adaptations of established arguments: non-degeneracy (\lref{nondegen}), free boundary perimeter (\lref{bdrystripenergy}) and Hausdorff dimension estimates (\lref{boxcovering}). We also give a proof of inner and outer density estimates in \sref{density}, this is not directly needed for our main result but we include it for usefulness in applications.

With these important intermediate results in hand, we establish our (suboptimal) quantitative homogenization result in \sref{suboptimal-hom}. We show that a regularization $\overline{u}$ of $u$ in the domain $\{u>0\} \cap B_r$ is close to minimizing $J_0$ with the following type of minimality condition
\[J_0(\overline{u},B_{r}) \leq J_0(v,B_r) + C\left[ (r/R_0)^\beta + r^{-\alpha}\right]|B_r| \ \hbox{ for all } \ v \in \overline{u} + H^1_0(B_r).\]  
  Note that there are two error terms in this energy estimate, the first from the almost minimality property of $u$ which is good for $r \ll R_0$, and the second from the homogenization which is good for $r \gg 1$ (the periodicity scale).  Importantly both terms are summable over a geometric sequence of scales between $1$ and $R_0$. This allows us to apply De Silva and Savin's \cite{DeSilvaSavin} $C^{1,\alpha}$ iteration for flat almost minimizers to obtain a flat implies Lipschitz result:
\vfill

{\begin{theorem}[Flat free boundaries are (large scale) Lipschitz graphs]\label{t.main2}
Assume that $a$ and $Q$ satisfy \eref{a-Q-ellipticity-intro}, both are $\Z^d$-periodic, and $a$ is Lipschitz continuous.  Suppose that $u$ is an $(R_0,\beta)$-almost minimizer in $B_R$, $1 \leq R \leq c(d,\Lambda,\beta)R_0$, and $\|\grad u\|_{L^\infty(B_R)} \leq L$.  There is a constant $\delta_0(d,\Lambda,L,\|\grad a\|_\infty)$ so that if
\[ \inf_{\nu \in S^{d-1}}\sup_{x \in B_R} \frac{1}{R}\left|u(x) - \tfrac{\langle Q^2 \rangle^{1/2}}{(\nu \cdot \bar{a} \nu)^{1/2}} (x \cdot \nu)_+\right| \leq \delta_0  \]
then, for all $y \in \partial \{u>0\} \cap B_{R/2}$ and for all $1 \leq r \leq R$
\[ \inf_{\nu \in S^{d-1}}\sup_{x \in B_r(y)} \frac{1}{r}\left|u(x) - \tfrac{\langle Q^2 \rangle^{1/2}}{(\nu \cdot \bar{a} \nu)^{1/2}} ((x-y) \cdot \nu)_+\right| \leq C(d,\Lambda,L,\|\grad a\|_\infty)\left[(r/R)^\alpha \delta_0 + r^{-\omega}\right].\]
And, as a consequence, $\partial \{u>0\} \cap B_{r/2}$ is within distance $C(d,\Lambda,L,\|\grad a\|_\infty)$ of a $1$-Lipschitz graph.
\end{theorem}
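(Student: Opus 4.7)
The plan is a Campanato-type iteration on dyadic scales: at each scale I compare $u$ to a solution of the homogenized free boundary problem \eref{homeqn} and apply De Silva and Savin's flatness improvement for almost minimizers of $J_0$. After translating so that $y = 0$, I would work on the geometric sequence of scales $r_k = \lambda^k R$ with $\lambda = \lambda(d,\Lambda,L) \in (0,1/2)$ chosen small enough to achieve a contraction in the De Silva--Savin iteration. The induction hypothesis is that there exist a unit vector $\nu_k$ and slope $s_k = \langle Q^2\rangle^{1/2}/(\nu_k \cdot \bar{a}\nu_k)^{1/2}$ with
\[ \delta_k := \sup_{x \in B_{r_k}} \tfrac{1}{r_k}\bigl|u(x) - s_k (x \cdot \nu_k)_+\bigr| \leq C\bigl[(r_k/R)^\alpha \delta_0 + r_k^{-\omega}\bigr]. \]

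The key input at each scale is the suboptimal quantitative homogenization result advertised in the introduction. Using a two-scale expansion with correctors, together with the Lipschitz bound from \tref{main1} and the nondegeneracy and perimeter estimates mentioned in the excerpt, I would produce a regularization $\bar{u}_k$ of $u$ in $\{u>0\}\cap B_{r_k}$ satisfying
\[ J_0(\bar{u}_k, B_{r_k}) \leq J_0(v, B_{r_k}) + C\bigl[(r_k/R_0)^\beta + r_k^{-\alpha}\bigr]|B_{r_k}| \]
for every $v \in \bar{u}_k + H^1_0(B_{r_k})$, together with an $L^\infty$-closeness bound $\|u-\bar{u}_k\|_{L^\infty(B_{r_k})} \leq C r_k \eta_k$, where $\eta_k$ goes to zero polynomially in $r_k^{-1}$ and $r_k/R_0$. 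When $\delta_k$ and $\eta_k$ are small compared with the slope $s_k$, $\bar{u}_k$ is a flat almost minimizer of the constant-coefficient functional $J_0$ at scale $r_k$.

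I would then feed $\bar{u}_k$ into the De Silva--Savin flatness improvement iteration for almost minimizers of $J_0$ from \cite{DeSilvaSavin}. This produces a rotated normal $\nu_{k+1}$ with $|\nu_{k+1} - \nu_k| \leq C(\delta_k + \eta_k^{1/2})$ and a contracted flatness at scale $r_{k+1} = \lambda r_k$,
\[ \delta_{k+1}(\bar{u}_k) \leq \tfrac{1}{2}\delta_k + C\bigl[(r_k/R_0)^\beta + r_k^{-\alpha}\bigr]^{1/2}, \]
which transfers back to $u$ with the same form of estimate after absorbing $\|u-\bar{u}_k\|_\infty$. Crucially, both error terms are summable along a geometric sequence $1 \leq r_k \leq R \leq c(d,\Lambda,\beta) R_0$, so iterating the one-step improvement and summing yields $\delta_k \leq C[(r_k/R)^\alpha \delta_0 + r_k^{-\omega}]$ at every dyadic scale, which gives the stated bound at arbitrary $1 \leq r \leq R$ by interpolation. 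The $1$-Lipschitz graph conclusion then follows by the standard geometric argument from uniform flatness across all scales combined with the summability $\sum_k |\nu_{k+1} - \nu_k| \leq C$.

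The principal obstacle is matching the smallness thresholds in the De Silva--Savin iteration against the homogenization error at the smallest scales $r_k \sim 1$, where $r_k^{-\alpha}$ is \emph{not} small: the iteration must be stopped at $r_k = O(1)$ and the residual absorbed into the additive $r_k^{-\omega}$ term, which is why the conclusion is a large scale flatness rather than a pointwise $C^{1,\alpha}$-type statement. A secondary technical point is that the direction change $\nu_{k+1} - \nu_k$ forces a change in the anisotropic slope $\langle Q^2\rangle^{1/2}/(\nu \cdot \bar{a}\nu)^{1/2}$; keeping this change inside the contraction estimate, while using the Lipschitz hypothesis on $a$ to bound the corrector gradients needed for the quantitative homogenization step, is the delicate part of the argument.
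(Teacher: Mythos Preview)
Your proposal is correct and follows essentially the same route as the paper: the paper proves a one-step improvement of flatness (\lref{eplip1step}) by upscaling $u$ to $\bar u$ via \lref{energyconv}, applying De Silva--Savin's \lref{dsalmostmin-iteration} to $\bar u$, and transferring back to $u$; this is then iterated over the geometric sequence $r_k=\eta^k R$ (\lref{eplip}), with the two error terms $(r_k/R_0)^{\beta/(d+4)}$ and $r_k^{-\omega}$ summed exactly as you describe, and the iteration halted at a bounded scale $r_{\bar k}=O_{d,\Lambda,L}(1)$. The only small discrepancies are cosmetic: the De Silva--Savin error exponent is $\tfrac{1}{d+4}$ rather than $\tfrac12$, and the Lipschitz hypothesis on $a$ is used in the paper not for corrector gradient bounds but for the Hopf-lemma step underlying strong non-degeneracy (\lref{poissonkernel}, \lref{nondegen}), which in turn feeds the perimeter estimate needed for \lref{energyconv}.
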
}

\begin{remark}A large scale flatness assumption is needed because even minimizers of $J_0$ can have singular free boundary points in higher dimensions \cite{JerisonSavin}.
\end{remark}

\begin{remark}Note that both results are new even for minimizers $R_0 = +\infty$.  We consider the more general case of almost minimizers because it adds only minor additional difficulties and it is useful for applications.
\end{remark}
\begin{remark}We must point out that it is extremely important that we are considering energy minimizers or almost minimizers and not general local minimizers or critical points \eref{epeqn}. It is not true in general that solutions of \eref{epeqn}, or even local energy minimizers of $J$, converge to solutions of \eref{homeqn} in the large scale limit \cite{CaffarelliLee,Feldman,FeldmanSmart}.  \end{remark}
\begin{remark}
The matching of the periodicity lattices of $a$ and $Q$ is actually not important at all.  In fact the arguments easily extend to non-periodic $Q$ with a quantitative estimate on the convergence of the spatial averages
\[ \sup_{x} \sup_{r \geq R}|\frac{1}{r^d}\int_{x+[-\frac{r}{2},\frac{r}{2})^d} Q(y)^2 \ dy - \langle Q^2 \rangle | \leq CR^{-\alpha}\]
where the important feature of the H\"older rate is that it is summable over any geometric sequence of scales. Sufficiently mixing stationary ergodic random fields $Q$ where the previous estimate holds without the supremum and with a random variable $C(x)$ satisfying some tail bounds should also be easy to fit into the arguments.

We also expect that analogous results hold for finite range dependence random fields $a(x)$ a la \cite{ArmstrongSmart}. The proofs are set up to be adaptable to this case but this is a more difficult extension and there could be unforeseen challenges.
\end{remark}
\begin{remark}\label{r.liouville-remark}
The result \tref{main2} implies a Liouville result for large scale flat minimizers on the whole space.  This is independently interesting. The Liouville result and related discussion can be found below in \sref{liouville}.  In particular, related to the difficulty of obtaining $C^{1,\alpha}$ estimates of the free boundary all the way down to the unit scale, see \rref{c1alpha-difficulty}. 
\end{remark}
\begin{remark}
The large scale flatness assumption would typically be justified in an application by proving an initial sub-optimal convergence rate to a regular limiting object.  The result \tref{main2} then takes this large scale flatness and iterates it down to all scales above the unit scale.  There is $C^{1,\alpha}$ improvement of flatness at intermediate scales which could be useful in some cases, we did not include the statement here in the introduction but it can be found below in \sref{flatimplieslipschitz}.
\end{remark}
\begin{remark}\label{r.microscopic-a-reg}
Generally it is preferable to remove microscopic regularity hypotheses on the coefficient field, which are philosophically unrelated to large scale regularity.  There are some impediments to completely removing these assumptions in \tref{main2}.  In \tref{main1} we use $a \in C^{0,\gamma}$ to obtain pointwise gradient bounds on $u$.   The pointwise gradient bounds are convenient, but could probably be replaced by the large scale gradient bound in \tref{main1} for most of the elements in the proof of \tref{main2}. 

The key place where we do seem to need local regularity of $u(x)$, and therefore of $a(x)$, is in the strong non-degeneracy estimate \lref{nondegen}.  Strong non-degeneracy can be proved from weak non-degeneracy using a now standard Harnack chain type argument due to Caffarelli \cite{CaffarelliIII} which needs to start from weak non-degeneracy in a, possibly, very small ball near the free boundary, see also \rref{harnack-chain-comment} below. 

Also in the course of proving strong non-degeneracy we use a standard Hopf Lemma barrier construction in \lref{poissonkernel}. This is the only place that Lipschitz as opposed to $C^{0,\gamma}$ is used in order to write the equation in non-divergence form. Hopf Lemma does hold for H\"older continuous coefficients, see  \cite{ApushkinskayaNazarov}, so the Lipschitz assumption could possibly be relaxed to just $C^{0,\gamma}$ with more work.
\end{remark}
\subsection{Literature}\label{s.literature}

The idea that elliptic problems in heterogeneous media could inherit regularity, up to a certain threshold, from their large scale effective limit was first introduced by Avellaneda and Lin in a series of papers \cite{AvellanedaLin}.  Often this kind of regularity theory can be used to obtain optimal quantitative convergence estimates for the homogenization limit.  This idea was re-emphasized by Armstrong and Smart  \cite{ArmstrongSmart} who were able to adapt the Avellaneda and Lin idea for the first time in random media.  The idea of \cite{ArmstrongSmart} is to first obtain a suboptimal quantitative homogenization result, which can then be used within the $C^{1,\alpha}$ iteration as long as the quantitative rate is summable over geometric sequences of scales.  It is exactly this idea which we are applying in the context of free boundary problems.

The regularity theory for minimizers of $J_0$ was first significantly developed by Alt and Caffarelli \cite{AltCaffarelli}.  The topic has been extremely popular since then and we do not make an attempt at a thorough recounting.   We will just point out the most relevant, which is the recent development of regularity theory of almost minimizers.  A series of papers by (various subsets of) the authors David, Engelstein, Smit Vega Garcia, and Toro \cite{DavidToro,DavidEngelsteinToro,DavidEngelsteinSVGToro} have initiated and made major developments in the study of almost minimizers of the more general two-phase version of the energy $J$.  David, Engelstein, Smit Vega Garcia and Toro \cite{DavidEngelsteinSVGToro} (two-phase) and, independently, Trey \cite{Trey} (vector-valued) have proved a Lipschitz estimate for almost minimizers of variable coefficient Bernoulli-type problems. De Silva and Savin \cite{DeSilvaSavin} gave another account of the regularity theory for almost minimizers of the one phase problem, and, as mentioned before, we will be following their ideas.  Note that all these works are considering small scale regularity, while we are considering large scales.  In other words one can consider our work to be on oscillatory coefficients $a(x/\ep)$ which do not satisfy a uniform in $\ep>0$ H\"older regularity hypothesis.

There are very few works on the large scale regularity theory of free boundary problems in heterogeneous media.  The only other result we are aware of is by Aleksanyan and Kuusi \cite{AleksanyanKuusi}. They obtain a result analogous to ours for a version of the obstacle problem in random media.    A major difficulty in studying the Bernoulli free boundary problem, as far as we understand it, is that the Euler-Lagrange equation \eref{epeqn} cannot be enough information by itself to get regularity.  This is already reflected in techniques for the homogeneous problem which variously use the additional properties of being a minimizer, almost minimizer, maximal subsolution, or minimal supersolution.  As established in \cite{CaffarelliLee,Kim,FeldmanSmart,Feldman} the large scale limit of minimal supersolutions, maximal subsolutions, and minimizers are all rather different Bernoulli free boundary problems and thus regularity theory techniques would need to be adapted to each case.  In fact the regularity theory for the homogenized problem for minimal supersolutions and maximal subsolutions is not understood yet \cite{Feldman}.  Thus we were led to consider first the case of minimizers and almost minimizers as we do in this paper.  In this case the large scale effective problem is a standard constant coefficient minimization problem and the regularity theory is well understood.
\subsection{Acknowledgments} The author was supported by the NSF grant DMS-2009286.  The author would like to thank Farhan Abedin for extensive and motivating discussions on the topics of the paper.

\section{Background results}
This section introduces notations and conventions which will be used in the paper and review some well known results from the literature about divergence form elliptic homogenization.  \subsection{Assumptions and conventions} As described in the introduction we will study energy functionals of the following type
\begin{equation*}
J(u,U) = \int_U \grad u \cdot a(x)\grad u + Q(x)^2{\bf 1}_{\{u>0\}} \ dx.
\end{equation*}

We make precise the assumptions on the coefficients which will be in place throughout the paper, unless otherwise specified in some few locations.

\begin{enumerate}[label = (Q\arabic*)]
\item \label{a.Q1} The coefficient $Q: \R^d \to (0,\infty)$ is assumed to satisfy the ellipticity condition
\begin{equation*}\label{e.Qellipticity}
\Lambda^{-1} \leq Q(x) \leq \Lambda.
\end{equation*}
\item \label{a.Q2} The coefficient $Q$ is assumed to be $\Z^d$-periodic and satisfy the normalization condition
\begin{equation*}\label{e.Qnormalization}
\langle Q^2 \rangle = \int_{[0,1]^2} Q(x)^2 \ dx = 1
\end{equation*}
\end{enumerate}
\begin{enumerate}[label = (a\arabic*)]
\item \label{a.a1} The coefficients $a: \R^d \to M_{d \times d}^{sym}(\R)$ are $\Z^d$-periodic and satisfy the ellipticity bounds
\begin{equation*}\label{e.aellipticity}
\Lambda^{-1} I\leq a(x) \leq \Lambda I 
\end{equation*}
\item \label{a.a2}The coefficient field $a: \R^d \to M_{d \times d}^{sym}(\R)$ is Lipschitz.
\end{enumerate}
See \rref{microscopic-a-reg} above for comments about the role of the microscopic regularity \aref{a1}.
\begin{enumerate}[label=$\bullet$]
\item  Constants $C$ and $c$ in the text below which depend at most on $d$, $\Lambda$, $\beta$, $\|\grad a\|_\infty$ are called \emph{universal}.  Such constants may change meaning from line to line without mention.  If we intend to fix the value of such a universal constant for a segment of the argument we may denote it as $c_0$, $C_0$, $c_1$ etc.
\end{enumerate}

Notations:
\begin{enumerate}[label = $\bullet$]
\item Balls are denoted $B_r$ (if the center does not need denotation) or $B_r(x)$. Boxes are denoted
\[ \Box_r = [-r/2,r/2)^d \ \hbox{ and } \ \Box_r(x) = x + \Box_r.\]
\item Lebesgue measure of a measurable set $U$ is denoted $|U|$.  This notation may also be used for lower dimensional measures if the set is obviously a particular Hausdorff dimension, i.e. $|\partial B_r|$ is the surface area of the sphere of radius $r$.
\item Averaged integrals \[ \dashint_U f\  dx = \frac{1}{|U|} \int_U f \ dx\]
\item Averaged $L^p$ norms \[\|f\|_{{\underline{L}}^p(U)} = (\frac{1}{|U|} \int_U |f|^p \ dx)^{1/p}.\]
\end{enumerate}

\subsection{Divergence form elliptic equations with periodic coefficients}\label{s.homrecall}
We consider elliptic energy functionals of the type
\[ E(u,U) = \int_{U}   \grad u\cdot a(x)\grad u \ dx\]
evaluated on a domain $U$ of $\R^d$. 

Given $g \in W^{1,p}(U)$ for some $p \geq 2$ we can consider the Dirichlet problem
\[ \min \{ E(u,U) : u \in g + H^1_0(U)\}\]
with the associated Euler-Lagrange PDE
\begin{equation}
\begin{cases}
-\grad \cdot (a(x) \grad u) = 0 & \hbox{in } U\\
u =g & \hbox{on } \partial U
\end{cases}
\end{equation}
all interpreted in the appropriate weak sense.

We introduce the correctors $\chi_q$ which are the unique global $\Z^d$ periodic and mean zero solutions of the problem
\[ - \grad \cdot (a(x)(q+\grad \chi_q)) = 0  \ \hbox{ in } \ \R^d\]
existing for each $q \in \R^d$.  It follows from the previous properties that $\chi_q$ depends linearly on $q \in \R^d$.  

We make a few notes about corrector bounds which will be useful. By multiplying the corrector equation by $q$ and integrating over a unit period cell,
\begin{equation*}\label{e.correctorest}
\|\grad \chi_q\|_{L^2([0,1)^d)}\leq \Lambda^2|q|.
\end{equation*}
By Poincar\'e and mean zero also
\begin{equation*}\label{e.correctorest2}
\|\chi_q\|_{L^2([0,1)^d)}\leq C|q|
\end{equation*}
and using periodicity we can also conclude that for all $r \geq 1$
\begin{equation}\label{e.correctorest3}
\|\grad \chi_q\|_{\underline{L}^2(B_r)}+\|\chi_q\|_{\underline{L}^2(B_r)}\leq C|q|
\end{equation}
for a universal $C \geq 1$.  By standard elliptic regularity theory \eref{correctorest3} can be upgraded to an $L^\infty$ estimate if we assume that $a \in C^{0,\gamma}$, the new constant $C$ will also depend on $\gamma$ and $[a]_{C^{0,\gamma}}$.

The homogenized matrix is defined
\begin{equation}\label{e.baradef}
 \bar{a}_{ij} =\langle (e_i + \grad \chi_{e_i}(x))a(x)(e_j + \grad \chi_{e_j}(x)) \rangle 
 \end{equation}
where $e_i$ are the standard basis vectors.  We define the homogenized energy functional
\[E_0(u,U) = \int_{U}   \grad u\cdot \overline{a}\grad u \ dx \]

Given these definitions the following convergence result is classical.  See \cite{ArmstrongSmart,AKM} for a proof in the more difficult context of random media.
\begin{theorem}[Homogenized replacement in balls]\label{t.quanthomregulardomain}
Suppose $r \geq 1$, $g \in W^{1,p}(B_r)$ for some $p > 2$, and $u$ and $u_0$ respectively minimize $E$ and $E_0$ over $H^1_0(B_r) + g$. Then there are $\alpha \in (0,1)$ and $C \geq 1$ depending on $d$, $\Lambda$, and $p$ so that
\[ |E(u,B_r) - E_0(u_0,B_r)| +\frac{1}{r^2}\| u -  u_0\|^2_{L^2(B_r)} \leq C\|\grad g\|_{\underline{L}^p(U)}^2r^{-\alpha}|B_r|.\]
\end{theorem}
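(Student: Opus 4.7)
The plan is to apply the classical two-scale expansion of periodic homogenization adapted to $W^{1,p}$ Dirichlet data. First I would establish higher integrability: since $\bar a$ is constant-coefficient and $g \in W^{1,p}$ with $p>2$, Calderón–Zygmund plus Meyers self-improvement gives $\nabla u_0 \in L^{p'}(B_r)$ for some $p' \in (2,p]$ with norm controlled by $\|\nabla g\|_{\underline{L}^p}$. This extra integrability is what allows a quantitative rate, and is the reason $p>2$ enters the statement.

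Next I would construct an admissible competitor via the correctors: set
\[ w(x) = u_0(x) + \eta(x)\,\chi_{e_i}(x)\,(S_\rho \partial_i u_0)(x), \]
with implicit sum in $i$, where $S_\rho$ is a smooth mollification at scale $\rho \in [1,r]$ and $\eta = \eta_\rho$ is a cutoff equal to $1$ outside a $\rho$-neighborhood of $\partial B_r$ and vanishing on $\partial B_r$. Then $w \in g + H_0^1(B_r)$ is admissible, so minimality of $u$ gives $E(u,B_r) \le E(w,B_r)$. Expanding $\nabla w$ and invoking the corrector equation $-\nabla\cdot(a(e_i + \nabla\chi_{e_i})) = 0$, the leading contribution to $E(w,B_r)$ equals
\[ \int_{B_r}\eta^2\,(e_i+\nabla\chi_{e_i})\cdot a\,(e_j+\nabla\chi_{e_j})\,(S_\rho\partial_i u_0)(S_\rho \partial_j u_0)\,dx. \]
The integrand is $\Z^d$-periodic in $x$ with mean $\bar a_{ij}$; writing the oscillatory part as $\nabla\cdot F$ for a bounded periodic flux corrector $F$ and integrating by parts against the smooth function $(S_\rho \partial_i u_0)(S_\rho \partial_j u_0)$ replaces this by $\int_{B_r}\eta^2\,\bar a_{ij}(S_\rho\partial_i u_0)(S_\rho\partial_j u_0)\,dx$ at cost $O(\rho^{-1})$ times the $W^{1,2}$ energy of the mollified gradients, which by standard smoothing bounds is $O(\rho^{-1}\|\nabla u_0\|_{\underline{L}^2}^2|B_r|)$.

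The remaining errors are (i) a boundary layer contribution on $\{\eta < 1\}$, of relative size $(\rho/r)^{1-2/p'}$ via higher integrability and Hölder, and (ii) a mollification error $\|(I-S_\rho)\nabla u_0\|_{L^2}$, also controlled by interpolation against $\|\nabla u_0\|_{L^{p'}}$. Combining gives $E(u,B_r) \le E_0(u_0,B_r) + C(\rho^{-1} + (\rho/r)^{1-2/p'})\|\nabla g\|_{\underline{L}^p}^2|B_r|$, and optimizing $\rho \in [1,r]$ yields $r^{-\alpha}$ for a universal $\alpha(d,\Lambda,p) \in (0,1)$. The reverse energy inequality is obtained symmetrically by testing $E_0$ against a similar construction, or by writing $u = u_0 + (u - u_0)$ and using the flux corrector. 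For the $L^2$ bound, since $u - w \in H_0^1(B_r)$, Poincaré and ellipticity of $a$ give $\|u - w\|_{L^2(B_r)}^2 \le C r^2 \|\nabla(u - w)\|_{L^2(B_r)}^2 \le C r^2 (E(u,B_r) - E(w,B_r) + E(w,B_r) - E_0(u_0,B_r))$, and combining with $\|w - u_0\|_{L^2(B_r)}^2 \le C\|\chi\|_{L^\infty}^2 \|\nabla u_0\|_{L^2}^2 |B_r|^{0}$ type bounds (using boundary layer control for the cutoff region) delivers the stated estimate. The principal technical obstacle is the boundary layer analysis and the simultaneous balancing of the three competing error scales $\rho^{-1}$, $(\rho/r)^{1-2/p'}$, and the mollification error, for which the strict inequality $p > 2$ is essential.
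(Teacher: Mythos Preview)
The paper does not actually prove this theorem: it is stated as a classical background result in \sref{homrecall}, with the remark ``the following convergence result is classical. See \cite{ArmstrongSmart,AKM} for a proof in the more difficult context of random media.'' So there is no in-paper proof to compare against.

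Your outline is the standard periodic two-scale expansion argument and is the approach one finds in the cited references (and in Shen's book \cite{Shen}, which the paper invokes for the closely related \pref{quanthom}). The ingredients --- Meyers higher integrability to exploit $p>2$, the corrector ansatz $u_0 + \eta\,\chi_{e_i} S_\rho \partial_i u_0$, the flux corrector to handle the periodic oscillation, and the boundary layer / mollification balance --- are exactly right.

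One small slip: in your $L^2$ estimate you write $\|\nabla(u-w)\|_{L^2}^2 \le C(E(u,B_r)-E(w,B_r)+\cdots)$, but since $u$ minimizes $E$ the quadratic identity gives the opposite sign, namely $\Lambda^{-1}\|\nabla(u-w)\|_{L^2}^2 \le E(w,B_r)-E(u,B_r)$. You then bound this by $(E(w)-E_0(u_0)) + (E_0(u_0)-E(u))$, and each piece is controlled by your upper and lower energy bounds respectively. With that correction the argument goes through.
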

This result does hold in the more general class of Lipschitz domains, and that assumption can be pushed a bit further, but we will not use that generality in our paper.  We will use a slightly different homogenization result in the positivity set $\{u>0\}$ which takes advantage of the a-priori Lipschitz bound from \sref{Lipschitz}, see \sref{recallquanthom} for that statement and more discussion.

From an initial quantitative homogenization result one can also prove a version of elliptic regularity for $a$-harmonic functions.  We will make use of the following facts.

\begin{theorem}[Interior regularity]\label{t.hominteriorreg}

(1) (Lipschitz estimate \cite[Lemma 16]{AvellanedaLin}) If $u$ is $a$-harmonic in $B_r$ then
\[ \|\grad u\|_{L^\infty(B_{r/2})} \leq \frac{C}{r}\|u\|_{L^\infty(B_r)}\]
where the constant $C$ is universal.

\noindent (2) ($C^{1,\alpha}$ estimate \cite[Lemma 15]{AvellanedaLin}) If $u$ is $a$-harmonic in $B_R$ then for any $r_0 \leq r \leq R$
\[  \sup_{B_{r}}|u(x) - (A+q \cdot x + \chi_q(x))| \leq C(\frac{r}{R})^{1+\alpha} \|u\|_{\underline{L}^2(B_R)}\]
for some $q$ and $A$ depending on $r$ with $|A| + r|q| \leq C\|u\|_{\underline{L}^2(B_R)}$.  And, since $A + q \cdot x + \chi_q$ is $a$-harmonic,
\[ \|\grad u - (q + \grad \chi_q)\|_{\underline{L}^2(B_{r})} \leq C(\frac{r}{R})^{\alpha}\frac{1}{R}\|u\|_{\underline{L}^2(B_R)}\,.\]
\end{theorem}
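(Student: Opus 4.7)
The plan is to follow the Avellaneda-Lin Campanato iteration in the quantitative style of Armstrong-Smart: use \tref{quanthomregulardomain} as the compactness input at large scales and classical $C^2$ interior regularity of $\bar a$-harmonic (constant coefficient) functions as the smoothing step at each scale. The natural object to iterate is the gradient excess
\[ \Phi(u; r) := \inf_{p \in \R^d} \|\grad u - p - \grad \chi_p\|_{\underline{L}^2(B_r)}, \qquad r \geq 1, \]
measuring how well $\grad u$ is approximated at scale $r$ by the gradient of a globally $a$-harmonic function of the form $A + p\cdot x + \chi_p$. Linearity of $p \mapsto \chi_p$ makes updates between scales additive, and since the remainder $u - A_r - p_r\cdot x - \chi_{p_r}$ is itself $a$-harmonic, the excess propagates cleanly under rescaling.

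The key estimate is a one-step decay: for small fixed $\theta \in (0,1)$, some $\alpha \in (0,1)$, and all $1 \leq r \leq R$,
\[ \Phi(u; \theta r) \leq \tfrac{1}{2}\theta^\alpha \Phi(u;r) + C r^{-\alpha/2}\, \|\grad u\|_{\underline{L}^2(B_R)}. \]
To prove this, let $w := u - p_r\cdot x - \chi_{p_r}$ ($a$-harmonic, with $\|\grad w\|_{\underline{L}^2(B_r)} = \Phi(u;r)$) and let $w_0$ be its $\bar a$-harmonic replacement on $B_{r/2}$. Combining \tref{quanthomregulardomain} with Meyers higher integrability of $\grad w$ on a nested interior ball and inserting the two-scale expansion yields
\[ \|\grad w - \grad w_0 - (\partial_i w_0)\, \grad \chi_{e_i}\|_{\underline{L}^2(B_{r/4})} \leq Cr^{-\alpha/2}\Phi(u;r). \]
On $B_{\theta r}$, the constant-coefficient interior $C^2$ estimate gives $\|\grad w_0 - \tilde p\|_{L^\infty(B_{\theta r})} \leq C\theta\, \Phi(u;r)$ with $\tilde p := \grad w_0(0)$, and combined with the uniform corrector-gradient bound $\|\grad \chi_{e_i}\|_{\underline{L}^2(B_{\theta r})} \leq C$ from \eref{correctorest3} (valid since $\theta r \geq 1$), the decay closes once $\theta$ is chosen so that $C\theta \leq \tfrac{1}{2}\theta^\alpha$. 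The essential point is the use of the two-scale expansion $\grad w \approx \grad w_0 + (\partial_i w_0)\grad \chi_{e_i}$ rather than the naive $\grad w \approx \grad w_0$: a direct argument would leave an order-one corrector contribution $\sim |\tilde p|$, whereas the two-scale expansion converts this into $\|\partial_i w_0 - \tilde p_i\|_{L^\infty(B_{\theta r})} \cdot \|\grad \chi_{e_i}\|_{\underline{L}^2(B_{\theta r})} \lesssim \theta\Phi(u;r)$, which decays. This is the main technical subtlety of the scheme.

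Iterating along $r_k = \theta^k R$ down to $r_k \sim 1$, the geometric factor $\theta^{k\alpha}$ dominates and the summable additive error $\sum_k r_k^{-\alpha/2}$ gives, for $1 \leq r \leq R$,
\[ \Phi(u; r) \leq C(r/R)^\alpha \|\grad u\|_{\underline{L}^2(B_R)}, \qquad |p_r| \leq C\|u\|_{\underline{L}^2(B_R)}/R. \]
Applying Moser's $L^\infty$ bound and Poincar\'e to the $a$-harmonic remainder $u - A_r - p_r\cdot x - \chi_{p_r}$ upgrades this to the $L^\infty$ approximation statement in (2). For the Lipschitz estimate (1), apply (2) at scale $r = 1$: the $a$-harmonic remainder has $L^\infty(B_1)$ norm $\leq C\|u\|_{L^\infty(B_R)}/R$. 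Since $a \in C^{0,\gamma}$ by \aref{a2}, classical Schauder interior regularity applied to this remainder and the pointwise Lipschitz bound for $\chi_{p_1}$ (also from Schauder with H\"older data) deliver $|\grad u(0)| \leq C\bigl(|p_1| + \|\grad \chi_{p_1}\|_{L^\infty(B_1)}\bigr) + (\text{small}) \leq C\|u\|_{L^\infty(B_R)}/R$. The iteration terminates precisely at the unit scale because both the quantitative homogenization estimate and the corrector bounds are only effective on balls of radius $\geq 1$; below that, microscopic Schauder theory (requiring the H\"older regularity of $a$) must take over, which is where hypothesis \aref{a2} enters for statement (1).
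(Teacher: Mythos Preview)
The paper does not actually prove \tref{hominteriorreg}: it is stated in \sref{homrecall} as a background result from the literature, with parts (1) and (2) cited directly from Avellaneda--Lin \cite[Lemmas 15 and 16]{AvellanedaLin}. So there is no ``paper's own proof'' to compare against; your sketch is a reconstruction of the (by now standard) quantitative Campanato scheme.

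Your outline is the right strategy and is essentially the Armstrong--Smart quantitative version of the Avellaneda--Lin compactness argument. One inconsistency is worth flagging. You state the one-step decay as
\[
\Phi(u;\theta r)\leq \tfrac12\theta^\alpha\Phi(u;r)+Cr^{-\alpha/2}\|\grad u\|_{\underline L^2(B_R)},
\]
with an \emph{additive} error of size $r^{-\alpha/2}$ times the global gradient scale, and then claim the iterated error is summable. It is not: along $r_k=\theta^kR$ the terms $r_k^{-\alpha/2}$ grow, and at the terminal scale $r_k\sim 1$ the additive contribution is of order $\|\grad u\|_{\underline L^2(B_R)}$, not $(r/R)^\alpha\|\grad u\|_{\underline L^2(B_R)}$. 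Fortunately your own derivation a few lines later gives the correct form: the two-scale expansion estimate you write is $\leq Cr^{-\alpha/2}\Phi(u;r)$, i.e.\ the homogenization error is \emph{multiplicative} in the excess. This yields
\[
\Phi(u;\theta r)\leq\bigl(\tfrac12\theta^\alpha+Cr^{-\alpha/2}\bigr)\Phi(u;r),
\]
which does iterate cleanly down to any fixed $r_0$ with $Cr_0^{-\alpha/2}\leq\tfrac12\theta^\alpha$. So the argument is fine once you replace the additive one-step statement by the multiplicative one your computation actually produces.
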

An analogous $C^{1,1}$ estimate is also true, the general theory in \cite{AKM} explains this, but the statements there are for random media and in a less convenient form for our purposes.

\section{Lipschitz estimate}\label{s.Lipschitz}

In this section we prove the Lipschitz estimate \tref{main1} for almost minimizers of energy functionals of the form
\[ J(u,U) = \int_U a(x) \grad u \cdot \grad u + Q(x)^2 {\bf 1}_{\{u>0\}} \ dx.\]
In this section $a(x)$ will be $\Z^d$-periodic, and $a$ and $Q$ will satisfy the ellipticity bounds i.e. we are under \aref{a1} and \aref{Q1}.  We do not need to assume any further structural assumptions (e.g. periodicity, regularity) on $Q$ at this stage.

A competitor $u \in H^1(U)$ is called a $(R_0,\beta)$-almost minimizer in a domain $U$ if, for any $B_r \subset U$, and any $v \in u + H^1_0(B_r)$
\begin{equation}\label{e.almostminimal}
 J(u,B_r) \leq (1+(r/R_0)^\beta)J(v,B_r) .
 \end{equation}
 \begin{remark}
 As mentioned in the statement of \tref{main1}, for the purposes of the Lipschitz estimate we only need the almost minimizing property to hold for the $a$-harmonic replacement.  That fact is convenient for making translations between slightly different notions of almost minimality.
 
We also remark that the proofs go through with almost no change with an additional additive term: for any $B_r \subset U$, and any $v \in u + H^1_0(B_r)$
\begin{equation}\label{e.almostminimaladditive}
 J(u,B_r) \leq (1+(r/R_0)^\beta)J(v,B_r) +(r/R_0)^\beta|B_r|.
 \end{equation}
 \end{remark}
 
We follow the approach of De Silva and Savin \cite{DeSilvaSavin} which proceeds in three steps:
\begin{itemize}
\item Step 1 (Dichotomy): if an almost minimizer $u$ has large $L^2$-averaged slope in a ball $B_r$ then either the $L^2$ averaged slope decreases by half in a smaller ball $B_{\eta r}$ or  $u$ is close to a corrected linear $a$-harmonic function in $B_{\eta r}$.
\item Step 2 (Interior-like improvement): In the second case of the previous dichotomy $u$ has interior $C^{1,\alpha}$-like improvement down to the unit scale.  
\item Step 3 (Iteration): Iterating the dichotomy either case results in a Lipschitz bound.
\end{itemize}
The main point for us is to use the correctors and the Avellaneda-Lin \cite{AvellanedaLin} interior estimates \tref{hominteriorreg} in place of the corresponding results for homogeneous problems.  Although this sounds rather straightforward, we were unable to make such an idea succeed in any of the other standard proofs of the Lipschitz estimate from the literature, see \cite{VelichkovNotes} for a survey of such methods.

\subsection{Step 1: Dichotomy}
\begin{lemma}\label{l.DSLIPP1}
Let $u \in H^1(B_r)$ and suppose that
\[ J(u,B_r) \leq (1+\sigma)J(v,B_r) \ \hbox{ for all } \ v \in u + H^1_0(B_r).\]
 Let $\frac{1}{4} \geq \delta>0$ there exist constants $\eta$, $M$ and $\sigma_0$ (depending on $\delta$ and universal constants) so that if $\sigma \leq \sigma_0$, $\eta r \geq 1$, and 
\[ \|\grad u\|_{{\underline{L}}^2(B_r)}  \geq M\]
then the following dichotomy holds: either
\[ \|\grad u\|_{{\underline{L}}^2(B_{\eta r})} \leq \frac{1}{2} \|\grad u\|_{{\underline{L}}^2(B_r)} \]
or 
\[\|\grad u - (q + \grad \chi_q(x))\|_{{\underline{L}}^2(B_{\eta r})}  \leq \delta \|\grad u\|_{{\underline{L}}^2(B_r)} \]
with some $q \in \R^n$ satisfying
\[ C_0^{-1}\|\grad u\|_{{\underline{L}}^2(B_r)} \leq |q| \leq C_0 \|\grad u\|_{{\underline{L}}^2(B_r)}\]
with a constant $C_0 \geq 1$ universal (not depending on $\delta$).
\end{lemma}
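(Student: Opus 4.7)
The plan is to follow the classical harmonic replacement strategy, with the $a$-harmonic replacement and the Avellaneda-Lin corrector approximation playing the roles that harmonic replacement and affine approximation play in the homogeneous setting. Let $v$ denote the $a$-harmonic replacement of $u$ in $B_r$, i.e.\ $v \in u + H^1_0(B_r)$ with $-\grad \cdot (a\grad v)=0$. Applying the almost-minimality hypothesis to $v$ and using $Q^2 \leq \Lambda^2$ gives
\[
E(u,B_r) - E(v,B_r) \leq \sigma E(u,B_r) + \sigma \Lambda^2 |B_r| + \Lambda^2|B_r|,
\]
where $E$ denotes the quadratic part of $J$. Combined with the orthogonality identity $\int_{B_r} \grad(u-v)\cdot a\grad(u-v) = E(u,B_r)-E(v,B_r)$ (valid since $u-v \in H^1_0(B_r)$ and $v$ is $a$-harmonic) and ellipticity, this yields
\[
\|\grad(u-v)\|_{{\underline{L}}^2(B_r)}^2 \leq C\sigma \|\grad u\|_{{\underline{L}}^2(B_r)}^2 + C.
\]
The assumption $\|\grad u\|_{{\underline{L}}^2(B_r)}\geq M$ absorbs the additive constant as $CM^{-2}\|\grad u\|_{{\underline{L}}^2(B_r)}^2$, so that $\|\grad(u-v)\|_{{\underline{L}}^2(B_r)} \leq C(\sigma^{1/2}+M^{-1})\|\grad u\|_{{\underline{L}}^2(B_r)}$.

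Next I apply the Avellaneda-Lin $C^{1,\alpha}$ estimate \tref{hominteriorreg}(2) to $v$ on the pair of scales $\eta r \leq r$ (which is legitimate because $\eta r \geq 1$). Applied to the mean-zero centering $v - \langle v \rangle_{B_r}$, the estimate produces $q \in \R^d$ with
\[
|q| \leq C\|\grad v\|_{{\underline{L}}^2(B_r)} \leq C\|\grad u\|_{{\underline{L}}^2(B_r)} \quad\text{and}\quad \|\grad v - (q + \grad \chi_q)\|_{{\underline{L}}^2(B_{\eta r})} \leq C\eta^\alpha \|\grad u\|_{{\underline{L}}^2(B_r)},
\]
where I have used Poincar\'e to convert the $\underline{L}^2$ norm of $v-\langle v\rangle_{B_r}$ divided by $r$ into $\|\grad v\|_{{\underline{L}}^2(B_r)}$, and that $v$ is the minimizer of $E$ over its Dirichlet class. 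Combining the two steps via the triangle inequality and the averaged-norm containment $\|\cdot\|_{{\underline{L}}^2(B_{\eta r})} \leq \eta^{-d/2}\|\cdot\|_{{\underline{L}}^2(B_r)}$ gives
\[
\|\grad u - (q + \grad \chi_q)\|_{{\underline{L}}^2(B_{\eta r})} \leq \left(C\eta^\alpha + C\eta^{-d/2}(\sigma^{1/2} + M^{-1})\right)\|\grad u\|_{{\underline{L}}^2(B_r)}.
\]
Choosing $\eta$ small so that $C\eta^\alpha \leq \delta/2$, then $\sigma_0$ small and $M$ large (depending on $\eta$, hence on $\delta$) so that the remaining term is at most $\delta/2$, secures the approximation by $q + \grad\chi_q$ with error $\delta\|\grad u\|_{{\underline{L}}^2(B_r)}$.

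The dichotomy then becomes a trichotomy on the size of $|q|$. The corrector bound \eref{correctorest3} gives $\|q + \grad \chi_q\|_{{\underline{L}}^2(B_{\eta r})} \leq C_1 |q|$ since $\eta r \geq 1$. If $|q| \leq (4C_1)^{-1}\|\grad u\|_{{\underline{L}}^2(B_r)}$, the triangle inequality yields
\[
\|\grad u\|_{{\underline{L}}^2(B_{\eta r})} \leq \delta\|\grad u\|_{{\underline{L}}^2(B_r)} + C_1|q| \leq (\delta + \tfrac{1}{4})\|\grad u\|_{{\underline{L}}^2(B_r)} \leq \tfrac{1}{2}\|\grad u\|_{{\underline{L}}^2(B_r)}
\]
provided $\delta \leq 1/4$, which is the first alternative. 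Otherwise $|q| \geq (4C_1)^{-1}\|\grad u\|_{{\underline{L}}^2(B_r)}$, which together with the upper bound from the previous paragraph gives the second alternative with $C_0 = \max(4C_1, C)$.

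The main subtlety, compared with the classical (constant-coefficient) argument, is the need to approximate by a \emph{corrected} plane $q\cdot x + \chi_q(x)$ rather than a pure affine function, which forces the lower bound $\eta r \geq 1$ on the target scale so that the spatial averages of $\chi_q$ behave as predicted by the cell problem. A minor but important bookkeeping point is to balance the $\eta^{-d/2}$ blow-up from restricting averaged norms against the $\eta^\alpha$ gain in Avellaneda-Lin, and to absorb the additive volume term coming from $Q^2 {\bf 1}_{\{u>0\}}$ through the largeness assumption on $\|\grad u\|_{{\underline{L}}^2(B_r)}$; once these are handled cleanly, the two-sided bound on $|q|$ in the second alternative follows from the Avellaneda-Lin gradient estimate together with the dichotomic threshold imposed on $|q|$.
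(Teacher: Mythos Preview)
Your proof is correct and follows essentially the same approach as the paper: $a$-harmonic replacement with the orthogonality identity to control $\|\grad(u-v)\|_{\underline{L}^2(B_r)}$, Avellaneda--Lin $C^{1,\alpha}$ to approximate $\grad v$ by $q+\grad\chi_q$ on $B_{\eta r}$, and then a threshold on $|q|$ to split into the two alternatives. The only cosmetic differences are that you absorb the additive constant $C$ into $CM^{-2}\|\grad u\|_{\underline{L}^2(B_r)}^2$ immediately whereas the paper carries it to the end before invoking $\|\grad u\|_{\underline{L}^2(B_r)}\geq M$, and the paper writes the dichotomy threshold as $\tfrac14 C_1^{-1}$ rather than $(4C_1)^{-1}$ with your $C_1$ coming from the corrector bound; these are the same argument.
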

\begin{proof}
1. Let $v$ be the $a$-harmonic replacement of $u$ in $B_1$.  Then
\[ \int_{B_r}  a(x) \grad (u-v) \cdot \grad (u-v) \ dx\leq J(u,B_r)+\int_{B_r} a(x) \grad v \cdot \grad v - 2 a(x)\grad u \cdot \grad v \ dx.\]
Using the minimizing property of $u$ and ellipticity
\[ \Lambda^{-1}\|\grad u - \grad v\|^2_{L^2(B_r)} \leq (1+\sigma)J(v,B_r) + \int_{B_r} a(x) \grad v \cdot \grad v - 2 a(x)\grad u \cdot \grad v \ dx.\]
Then combining the $|\grad v|^2$ terms on the right
\[ \Lambda^{-1} \|\grad u - \grad v\|^2_{L^2(B_r)} \leq \sigma \|\grad v\|^2_{L^2(B_r)} + \Lambda |B_r| + \int_{B_r}  2a(x) \grad v \cdot \grad (v-u) \ dx.\]
The last term is $0$ because $v$ is $a$-harmonic so
\[ \|\grad u - \grad v\|^2_{\underline{L}^2(B_r)} \leq C \sigma \|\grad v\|^2_{\underline{L}^2(B_r)} + C \leq C (\sigma \|\grad u\|^2_{\underline{L}^2(B_r)} + 1).\]

2. Since $v$ is $a$-harmonic in $B_r$ we can apply the interior $C^{1,\alpha}$ estimate \tref{hominteriorreg} to find that, as long as $\eta r \geq 1$,
\[ \|\grad v - (q + \grad \chi_q)\|_{\underline{L}^2(B_{\eta r})} \leq C\eta^\alpha \|\grad v\|_{\underline{L}^2(B_r)} \leq C \eta^\alpha \|\grad u\|_{\underline{L}^2(B_r)}\]
for some
\[ |q| \leq  C\|\grad u\|_{\underline{L}^2(B_r)}.\]
Note also, by the corrector estimates \eref{correctorest3},
\[ \|\grad \chi_q\|_{\underline{L}^2(B_r)} \leq C|q|.\]

3. Now we combine the previous parts to estimate
\begin{align*}
 \|\grad u - (q + \grad \chi_q(x))\|_{\underline{L}^2(B_{\eta r})}&\leq \|\grad v - (q + \grad \chi_q(x))\|_{\underline{L}^2(B_{\eta r})} + \|\grad u - \grad v\|_{\underline{L}^2(B_{\eta r})}\\
&\leq C \eta^\alpha \|\grad u\|_{\underline{L}^2(B_r)}+C \sigma^{1/2}\eta^{-d/2} \|\grad u\|_{\underline{L}^2(B_r)} + C\eta^{-d/2}
\end{align*}
and so, also,
\[ \|\grad u\|_{\underline{L}^2(B_{\eta r})} \leq C \eta^\alpha \|\grad u\|_{\underline{L}^2(B_r)}+C \sigma^{1/2}\eta^{-d/2} \|\grad u\|_{\underline{L}^2(B_r)} + C\eta^{-d/2} + C|q|. \]
Both estimates require that $\eta r \geq 1$.

Now, we first fix $\eta>0$ small depending on $\delta$, and then fix $\sigma_0>0$ small and $M>1$ large depending on $\delta$ and the choice of $\eta$ so that
\begin{align*}
 C \eta^\alpha \|\grad u\|_{\underline{L}^2(B_r)}+&C \sigma^{1/2}\eta^{-d/2} \|\grad u\|_{\underline{L}^2(B_r)} + C\eta^{-d/2} \\
    &\leq \frac{1}{4} \delta \|\grad u\|_{\underline{L}^2(B_r)} + C\eta^{-d/2}M^{-1}\|\grad u\|_{\underline{L}^2(B_r)} \\
    &\leq  \delta \|\grad u\|_{\underline{L}^2(B_r)}.
 \end{align*}
 Now, as long as $\eta r \geq 1$, we have the two inequalities
 \[  \|\grad u - (q + \grad \chi_q(x))\|_{\underline{L}^2(B_{\eta r})}\leq  \delta \|\grad u\|_{\underline{L}^2(B_r)}\]
 and, using $\delta \leq \frac{1}{4}$,
 \[  \|\grad u\|_{\underline{L}^2(B_{\eta r})} \leq  \frac{1}{4} \|\grad u\|_{\underline{L}^2(B_r)} + C_1|q|. \]
 
 4. Finally we divide up into the dichotomy in the statement.  If 
 \[ |q| \leq \frac{1}{4}C_1^{-1}\|\grad u\|_{\underline{L}^2(B_r)}\]
 then
 \[ \|\grad u\|_{\underline{L}^2(B_{\eta r})} \leq  \frac{1}{2} \|\grad u\|_{\underline{L}^2(B_r)}\]
 while, otherwise, 
 \[ \|\grad u - (q + \grad \chi_q(x))\|_{\underline{L}^2(B_{\eta r})}\leq  \delta \|\grad u\|_{\underline{L}^2(B_r)}\]
 with some
 \[ \frac{1}{4}C_1^{-1}\|\grad u\|_{\underline{L}^2(B_r)} \leq |q| \leq C\|\grad u\|_{\underline{L}^2(B_r)}.\]
\end{proof}

\subsection{Step 2: Close to planar implies interior-like $C^{1,\alpha}$ improvement} The next lemma says that if $\grad u$ is $\delta|q|$ close to a corrector gradient $q + \grad \chi_q$ in $\underline{L}^2$, then there is an interior $C^{1,\alpha}$-like improvement of oscillation.  Basically an energy argument shows that the zero level $\{u = 0\}$ must be relatively small and so the oscillation improvement of the $a$-harmonic replacement carries over to $u$.

\begin{lemma}\label{l.DSLIPP2}
Let $u$ as in \lref{DSLIPP1} and $r \geq 1$.  Suppose that $|q| \geq1$ and
\[ \|\grad u - (q + \grad \chi_q(x))\|_{\underline{L}^2(B_{r})} \leq \delta |q| \]
for some $1>\delta>0$.

  There are $\alpha_0>0$, $\mu>0$, $1>\bar{\delta}>0$ and $c_0>0$ universal such that if
\[ \delta \leq \bar{\delta}, \ \sigma \leq c_0  \delta^2, \ \hbox{ and } \ \mu r \geq 1\]
then
\[ \|\grad u - (q' + \grad \chi_{q'}(x))\|_{\underline{L}^2(B_{\mu r})} \leq \mu^{\alpha_0}\delta   |q'| \]
with $q' \in \R^d$ satisfying
\[ |q' - q| \leq C_1 \delta |q|\]
for a universal $C_1\geq 1$.
\end{lemma}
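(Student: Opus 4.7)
The plan is to follow the De Silva--Savin flat-improvement argument, replacing the linear target $q\cdot x$ throughout by the $a$-harmonic \emph{corrected affine} function $\ell(x):=q\cdot x + \chi_q(x)+c$, and using the Avellaneda--Lin interior estimate \tref{hominteriorreg}(2) in place of the classical Schauder estimate. Choose $c$ via Poincar\'e applied to $u-\ell$ so that $\|u-\ell\|_{\underline{L}^2(B_r)}\leq Cr\delta|q|$. The argument then has three stages: (i) a smallness estimate on the coincidence set $\{u=0\}\cap B_r$; (ii) an $H^1$ comparison of $u$ with its $a$-harmonic replacement $v$ in $B_r$ via almost minimality; (iii) application of the corrector-based interior $C^{1,\alpha}$ estimate to $v-\ell$ and reassembly.

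\textbf{Steps (i) and (ii).} For (i), on $\{u=0\}$ we have $\ell^2=(u-\ell)^2$, so $\int_{\{u=0\}\cap B_r}\ell^2\leq Cr^2\delta^2|q|^2|B_r|$. Combining this with the slab estimate for $\{|\ell|<s\}$ (using $\|\chi_q\|_\infty\leq C|q|$ from $a\in C^{0,\gamma}$), a Chebyshev bound, the hypothesis $|q|\geq 1$, and the periodicity of $|q+\grad\chi_q|^2$ (whose period-cell average is $\geq \Lambda^{-2}|q|^2$), I expect to obtain
\[ |\{u=0\}\cap B_r|\leq C\delta^2|B_r|. \]
For (ii), let $v$ be the $a$-harmonic replacement of $u$ in $B_r$. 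Polarization (using $\int a\grad v\cdot\grad(u-v)=0$) together with almost minimality gives
\[
\|\grad(u-v)\|_{L^2(B_r)}^2 \leq C\sigma\|\grad v\|_{L^2(B_r)}^2 + \Lambda^2|\{u=0\}\cap B_r| + \sigma\Lambda^2|B_r|.
\]
Inserting $\|\grad v\|_{\underline{L}^2(B_r)}\leq C|q|$, the bound $\sigma\leq c_0\delta^2$, the zero-set estimate from (i), and $|q|\geq 1$ yields $\|\grad(u-v)\|_{\underline{L}^2(B_r)}\leq K\delta|q|$, where $K$ is a universal constant that can be made as small as desired by shrinking $c_0$ and the implied constant in the zero-set bound.

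\textbf{Step (iii) and assembly.} The function $\tilde v:=v-\ell$ is $a$-harmonic in $B_r$ with $r^{-1}\|\tilde v\|_{\underline{L}^2(B_r)}\leq C(K+1)\delta|q|$ (by Poincar\'e on $v-u\in H^1_0(B_r)$ and triangle inequality with $\|u-\ell\|_{\underline{L}^2(B_r)}\leq Cr\delta|q|$). Applying \tref{hominteriorreg}(2) to $\tilde v$, and using $\mu r\geq 1$, provides $\tilde q\in\R^d$ with $|\tilde q|\leq C(K+1)\delta|q|$ satisfying
\[
\|\grad\tilde v-(\tilde q+\grad\chi_{\tilde q})\|_{\underline{L}^2(B_{\mu r})}\leq C\mu^\alpha(K+1)\delta|q|.
\]
Setting $q':=q+\tilde q$ and using linearity $\chi_{q'}=\chi_q+\chi_{\tilde q}$, the triangle inequality at scale $\mu r$ yields
\[
\|\grad u-(q'+\grad\chi_{q'})\|_{\underline{L}^2(B_{\mu r})} \leq \mu^{-d/2}K\delta|q| + C\mu^\alpha(K+1)\delta|q|.
\]
First fix $\mu$ small universal so $C\mu^\alpha\leq \tfrac14\mu^{\alpha_0}$ for some $\alpha_0\in(0,\alpha)$, then fix $c_0$ and $\bar\delta$ small universal so $K\mu^{-d/2}\leq \tfrac14\mu^{\alpha_0}$, and absorb $|q|\leq (1+C_1\bar\delta)|q'|$; this delivers the claimed inequality with $C_1=C(K+1)$ universal.

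\textbf{Main obstacle.} The technical heart of the argument is the sharp zero-set estimate $|\{u=0\}\cap B_r|\leq C\delta^2|B_r|$. A naive balance of the slab measure of $\{|\ell|<s\}$ against the Chebyshev bound on $\{u=0,|\ell|>s\}$ yields only $|\{u=0\}\cap B_r|\leq C\delta^{2/3}|B_r|$, which is insufficient because the loss factor $\mu^{-d/2}$ incurred in passing from scale $r$ to $\mu r$ cannot be absorbed without the sharp $\delta^2$ exponent. Obtaining this sharp bound appears to require an averaging argument exploiting the periodicity of $|q+\grad\chi_q|^2$ to lower-bound $\int_E|q+\grad\chi_q|^2$ for sets $E\subset B_r$ of density close to one, possibly combined with $a$-subharmonicity of $u$ (so $u\leq v$) and interior regularity of $v-\ell$ from elliptic theory.
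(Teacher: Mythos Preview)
Your overall architecture matches the paper's: harmonic replacement, zero-set estimate, Avellaneda--Lin $C^{1,\alpha}$ applied to $v-\ell$, then reassembly. The genuine gap is exactly where you flag it, but your diagnosis of what is needed and how to get it is off in two respects.

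\textbf{First, $\delta^2$ is not enough.} Your claim that ``$K$ can be made as small as desired by shrinking $c_0$ and the implied constant in the zero-set bound'' is false: the constant $C$ in any estimate $|\{u=0\}\cap B_r|\leq C\delta^2|B_r|$ is universal and not at your disposal. With such a bound the zero-set contribution to $\|\grad(u-v)\|_{\underline L^2}^2$ is a fixed universal multiple of $\delta^2$, so after localizing to $B_{\mu r}$ you carry a term $C\mu^{-d/2}\delta|q|$ with $C$ fixed; no choice of small $\mu$ makes this $\leq \tfrac14\mu^{\alpha_0}\delta|q|$. What is actually required is $|\{u=0\}\cap B_{r/2}|\leq C\delta^{2+\beta}|B_{r/2}|$ for some universal $\beta>0$, so that the extra factor $\delta^{\beta}$ can absorb $\mu^{-d}$ once $\delta\leq\bar\delta(\mu)$.

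\textbf{Second, the route to the sharp bound is not periodicity averaging or subharmonicity.} The paper obtains $\delta^{2^*}$ (with $2^*=\tfrac{2d}{d-2}>2$) in two steps. Using $u\geq 0$ and $\|u-\ell\|_{\underline L^2(B_r)}\leq C\delta|q|r$, one first shows that the affine part $q\cdot x+\text{const}$ of $\ell$ must be positive throughout $B_{3r/4}$ (otherwise $\|(\ell)_-\|_{\underline L^2(B_r)}\gtrsim |q|r$, contradicting smallness for $\delta$ small and $r$ large via $\|\chi_q\|_\infty\leq C|q|$). Hence $\ell\geq c|q|r$ on $B_{r/2}$, which eliminates the slab/Chebyshev balance entirely. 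Then Poincar\'e--\emph{Sobolev} (not Poincar\'e) gives $\|u-\ell\|_{\underline L^{2^*}(B_r)}\leq C\delta|q|r$, and since $|u-\ell|=|\ell|\geq c|q|r$ on $\{u=0\}\cap B_{r/2}$ one reads off $|\{u=0\}\cap B_{r/2}|/|B_{r/2}|\leq C\delta^{2^*}$. With this in hand your Step~(iii) closes exactly as in the paper: fix $\mu$ for the $\mu^{2\alpha}$ term, then $\bar\delta$ so that $\delta^{\beta}$ beats $\mu^{-d}$, then $c_0$ for the $\sigma$ term.
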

The result should be possible with any $\alpha \in (0,1)$ with parameters depending on $\alpha$ using the $C^{1,1}$ version of \tref{hominteriorreg}.

\begin{proof}
1. (Harmonic replacement) Let $\bar{v}$ be the $a$-harmonic replacement of $u$ in $B_{r/2}$ and let $v$ be
\[ v = u \ \hbox{ in } \ B_r \setminus B_{r/2} \ \hbox{ and }\ v = \bar{v} \ \hbox{ in } \ B_{r/2}.\]
Then
\[ J(u,B_r) \leq (1+\sigma)J(v,B_r)\]
or
\[ J(u,B_{r/2}) \leq \sigma J(u,B_r \setminus B_{r/2}) + (1+\sigma) J(v,B_{r/2}).\]
Also note, using the corrector estimate $\| \grad \chi_q\|_{\underline{L}^2(B_r)} \leq C|q|$ from \eref{correctorest3},
\[ \|\grad u\|_{\underline{L}^2(B_{r})} \leq C|q|.\]
  Thus
\[ \int_{B_{r/2}} a(x) \grad u \cdot \grad u - a(x) \grad v \cdot \grad v \ dx +\int_{B_{r/2}} Q(x) {\bf 1}_{\{u>0\}} \ dx \leq \sigma (C|q|^2+\Lambda)|B_r| + \int_{B_{r/2}} Q(x) \ dx.\]
Since $v$ is the $a$-harmonic replacement for $u$ in $B_{r/2}$
\[ \int_{B_{r/2}} a(x) \grad u \cdot \grad u - a(x) \grad v \cdot \grad v \ dx = \int_{B_{r/2}} a(x) \grad (u-v) \cdot \grad (u-v) \ dx\]
so, plugging that in, re-arranging, and using ellipticity,
\[ \|\grad u - \grad v\|_{\underline{L}^2(B_{r/2})}^2 \leq C\sigma (|q|^2 + 1) + \frac{1}{|B_{r/2}|} \int_{B_{r/2}} Q(x){\bf 1}_{\{ u = 0\}} \ dx\]
or
\begin{equation}\label{e.uvzerosetest}
 \|\grad u - \grad v\|_{\underline{L}^2(B_{r/2})}^2 \leq C\sigma |q|^2 + \Lambda \frac{|\{ u(x) =0\} \cap B_{r/2}|}{|B_{r/2}|}
 \end{equation}

2. (Large slope implies small measure of zero level) Next we claim that
\[ \frac{|\{ u(x) =0\} \cap B_{r/2}|}{|B_{r/2}|} \leq C\delta^{2+\beta} \]
for some universal $C,\beta>0$.  Actually it will hold for $2+\beta \leq 2^* = \frac{2d}{d-2} = 2 + \frac{4}{d-2}$ the Sobolev embedding exponent for $L^2$ (or $2+\beta < 2^* = +\infty$ in $d=2$).

Note that
\[ \int_{B_r} (u - \ell) \ dx = 0 \ \hbox{ where } \ \ell(x) = q \cdot x + \chi_q(x) + \dashint_{B_r} u - \chi_q \ dx.\]
So, by Poincar\'e,
\[ c\|u - \ell\|_{\underline{L}^2(B_{r})} \leq  r\|\grad u - \grad \ell\|_{\underline{L}^2(B_{r})} \leq \delta |q| r.\]
Since $u \geq 0$ 
\[ c\|(\ell)_-\|_{\underline{L}^2(B_{r})}\leq \delta |q| r.\]
If $q \cdot x + \dashint_{B_r} (u - \chi_q )$ were not positive in $B_{3r/4}$ then $|q \cdot x| \geq \frac{1}{8}|q|r$ in some ball of radius $r/16$ contained in $B_r$ so, using $\|\chi_q\|_{\underline{L}^2(B_r)} \leq K|q|$ from \eref{correctorest3},
\[ \|(\ell)_-\|_{\underline{L}^2(B_{r})} \geq c|q|r - \|\chi_q\|_{\underline{L}^2(B_{r})} \geq c|q| (r-c^{-1}K)\]
for a universal constant $c$.  This is a contradiction for $r \geq 2c^{-1}K$ and $\delta$ sufficiently small depending on $K$ and other universal constants.  Note that, given the hypothesis $\mu r \geq 1$, we can think of this as a requirement on the choice of $\mu$, that we must choose $\mu \leq \frac{1}{2}cK^{-1}$.

Thus
\[ \ell(x) \geq  \frac{1}{4}|q|r - \|\chi_q\|_{\infty} \geq \frac{1}{4}|q|(r-4K) \geq c|q|r \ \hbox{ in } \ B_{r/2}\]
for a universal $c>0$ and using $r \geq 8K$ (imposing additionally $\mu \leq \frac{1}{8K}$).

By Poincar\'e-Sobolev
\[ \|u - \ell\|_{\underline{L}^{2^*}(B_{r})} \leq Cr\|\grad u - \grad \ell\|_{\underline{L}^2(B_{r})} \leq C\delta |q| r \]
but also
\[ \|u - \ell\|_{\underline{L}^{2^*}(B_{r})} \geq c|q| r \frac{|\{ u = 0\} \cap B_{r/2}|^{1/2^*}}{|B_{r/2}|^{1/2^*}}.\]
So the claim follows combining the previous two inequalities.

3. Note that $v(x) - (q \cdot x + \chi_q(x))$ is the $a$-harmonic replacement of $u(x)- (q \cdot x + \chi_q(x))$.  So
\[ \|\grad v - (q + \grad \chi_q)\|_{\underline{L}^2(B_{r/2})} \leq \Lambda^2\|\grad u - (q + \grad \chi_q)\|_{\underline{L}^2(B_{r/2})} \leq \delta \Lambda^2 |q|.\]
By the Avellaneda-Lin interior $C^{1,\alpha}$ estimates for $a$-harmonic functions \tref{hominteriorreg} applied to $v(x) - (q \cdot x + \chi_q(x))$, for any $ 0 < \mu < 1/2$ such that $\mu r \geq 1$,
\[\|\grad v - (q + \grad \chi_q) - (\bar{q}+ \grad \chi_{\bar{q}})\|_{\underline{L}^2(B_{\mu r})} \leq C\mu^\alpha \|\grad v - (q + \grad \chi_q)\|_{\underline{L}^2(B_{r/2})}  \leq C \mu^\alpha \delta |q|\]
for some $\bar{q} \in \R^d$ with
\[ |\bar{q}| \leq C\|\grad v - (q + \grad \chi_q)\|_{\underline{L}^2(B_{r/2})} \leq C\delta |q|.\]
Write $q' = q + \bar{q}$ and recall the linearity of the corrector $\chi_q + \chi_{\bar{q}} = \chi_{q + \bar{q}}$.  Assuming $C\delta \leq \frac{1}{2}$ we have $ 2|q'| \geq |q| \geq 1$.

Combining this with estimate \eref{uvzerosetest}
\[ \|\grad u - (q' + \grad \chi_{q'})\|^2_{\underline{L}^2(B_{\mu r})} \leq C\sigma \mu^{-d}|q|^2 +C \mu^{-d}\delta^{2+\beta} + C\mu^{2\alpha} \delta^2 |q|^2.\]
Now choose parameters in the following order: let $\alpha_0 = \alpha/2$ (or anything smaller than $\alpha$ works), then choose $\mu$ small enough so that $C\mu^{2(\alpha-\alpha_0)} \leq \frac{1}{6}$ for the third term (in addition to the previous requirements), then choose $\delta \leq \bar{\delta}$ with $\bar{\delta}^{\beta} = \frac{1}{6}C^{-1}\mu^{d+2\alpha_0}$ for the second term.  Finally the condition $\sigma \leq \frac{1}{6}C^{-1}\delta^2\mu^{d+2\alpha_0}$ gives that all three terms above are smaller than $\frac{1}{6}\mu^{2\alpha_0}\delta^2 |q'|^2$ (note we are using $|q'| \geq \frac{1}{2}|q| \geq \frac{1}{2}$).

\end{proof}

\begin{lemma}\label{l.DSinteriorLIP}
Suppose that $u$ is a $(R_0,\beta)$-almost minimizer of $J$ in $B_R$ for some $R_0 \geq R \geq 1$ and for some $|q| \geq 2$
\[ \|\grad u - (q + \grad \chi_q(x))\|_{\underline{L}^2(B_{R})} \leq \delta |q|. \]
There is universal $\delta_*>0$ so that if $\delta \leq \delta_*$ then
\[  \|\grad u\|_{\underline{L}^2(B_{r})} \leq C|q| \ \hbox{ for all } \ 1 \leq r \leq R.\]
\end{lemma}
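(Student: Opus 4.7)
The strategy is to iterate \lref{DSLIPP2} along the geometric sequence of scales $r_k := \mu^k R$ for $k = 0, 1, 2, \ldots$, as long as $\mu r_k \geq 1$. Setting $q_0 := q$ and $\delta_0 := \delta$, I aim to construct inductively vectors $q_k \in \R^d$ satisfying
\[ \|\grad u - (q_k + \grad \chi_{q_k})\|_{\underline{L}^2(B_{r_k})} \leq \delta_k |q_k|, \qquad \tfrac{1}{2}|q| \leq |q_k| \leq 2|q|, \]
with $\delta_k := \mu^{k\alpha_0}\delta$. The base case $k=0$ is the hypothesis of the lemma, and I let $k^\star$ denote the last index where $\mu r_k \geq 1$, so that $r_{k^\star+1} \in [1, \mu^{-1})$.

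For the inductive step at scale $r_k$, the three hypotheses of \lref{DSLIPP2} must be verified. The smallness $\delta_k \leq \bar\delta$ is automatic because $\delta_k$ decreases in $k$ from $\delta_0 \leq \delta_*$. The scale condition $\mu r_k \geq 1$ is exactly what defines the stopping index. The almost-minimality defect $\sigma_k = (r_k/R_0)^\beta$ must satisfy $\sigma_k \leq c_0 \delta_k^2$; invoking the remark following \lref{DSLIPP2} that $\alpha_0$ may be taken arbitrarily small, I fix $\alpha_0 \leq \beta/2$, so that
\[ \sigma_k = \mu^{k\beta}(R/R_0)^\beta \leq \mu^{2k\alpha_0}(R/R_0)^\beta, \]
and $\sigma_k \leq c_0 \delta_k^2$ reduces to the single base-case condition $(R/R_0)^\beta \leq c_0 \delta^2$, which is absorbed into the definition of $\delta_*$. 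Meanwhile \lref{DSLIPP2} produces $q_{k+1}$ with $|q_{k+1} - q_k| \leq C_1 \delta_k |q_k|$; summing the geometric series yields $|q_k - q| \leq \tfrac{2C_1 \delta}{1-\mu^{\alpha_0}}|q|$, and taking $\delta_*$ small enough preserves the sandwich $\tfrac{1}{2}|q| \leq |q_k| \leq 2|q|$, in particular $|q_k| \geq 1$ as required by the next invocation of \lref{DSLIPP2}.

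Once the iteration has produced the estimate at every scale $r_0, r_1, \ldots, r_{k^\star+1}$, the triangle inequality combined with the corrector estimate \eref{correctorest3} gives
\[ \|\grad u\|_{\underline{L}^2(B_{r_k})} \leq (1+\delta_k)|q_k| + \|\grad \chi_{q_k}\|_{\underline{L}^2(B_{r_k})} \leq C|q| \]
at each dyadic scale. For an intermediate scale $r$ with $r_{k+1} \leq r \leq r_k$ (and analogously for $1 \leq r \leq r_{k^\star+1}$), the monotonicity of the unaveraged integral $\int_{B_r}|\grad u|^2 \leq \int_{B_{r_k}}|\grad u|^2$ yields
\[ \|\grad u\|_{\underline{L}^2(B_r)}^2 \leq (r_k/r)^d\|\grad u\|_{\underline{L}^2(B_{r_k})}^2 \leq \mu^{-d} C^2 |q|^2, \]
covering all scales $1 \leq r \leq R$. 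The main obstacle is the calibration of $\alpha_0$ against $\beta$ so that the ratio $\sigma_k/\delta_k^2$ remains bounded as the iteration proceeds — this is precisely where the summability of the almost-minimality error $(r/R_0)^\beta$ over a geometric sequence of scales enters.
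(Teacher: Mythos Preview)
Your overall strategy matches the paper's: iterate \lref{DSLIPP2} along $r_k=\mu^kR$, control $|q_k-q|$ by summing the geometric series, and fill in intermediate scales by volume scaling. There is, however, a genuine gap in your verification of the hypothesis $\sigma_k\leq c_0\delta_k^2$.

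With your choice $\delta_k=\mu^{k\alpha_0}\delta$ the condition indeed reduces to $(R/R_0)^\beta\leq c_0\delta^2$. But this involves the \emph{actual} flatness level $\delta$, which the lemma allows to be arbitrarily small (the only assumption is $\delta\leq\delta_*$). Shrinking $\delta_*$ can only force $\delta$ to be smaller, which makes $c_0\delta^2$ smaller and the inequality \emph{harder} to satisfy; it cannot be ``absorbed into the definition of $\delta_*$'' as you claim. Concretely, if $R=R_0$ and $\delta$ is tiny, your iteration never gets off the ground.

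The paper's fix is simple but essential: run the induction with $\delta_k=\mu^{k\alpha_0}\delta_*$ rather than $\mu^{k\alpha_0}\delta$. Since $\delta\leq\delta_*$, the base hypothesis $\|\nabla u-(q+\nabla\chi_q)\|_{\underline{L}^2(B_R)}\leq\delta|q|\leq\delta_*|q|$ still launches the iteration, and now the reduction gives $(R/R_0)^\beta\leq c_0\delta_*^2$, a condition involving only the universal constant $\delta_*$ and the scale ratio $R/R_0$. The paper then says ``it suffices to consider'' this case; the point is that if it fails then $R/R_0$ lies in a universally bounded interval, so one may replace $R$ by $R'=(c_0\delta_*^2)^{1/\beta}R_0$ (carrying the flatness hypothesis to $B_{R'}$ at the cost of a bounded factor $(R/R')^{d/2}$ in front of $\delta$, which is then absorbed by taking $\delta_*$ smaller) and cover the remaining scales $R'\leq r\leq R$ by the trivial bound $\|\nabla u\|_{\underline{L}^2(B_r)}\leq (R/r)^{d/2}\|\nabla u\|_{\underline{L}^2(B_R)}\leq C|q|$.
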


\begin{proof}
We iterate \lref{DSLIPP2} with $\alpha = \beta/2$.  Pick $\delta_*$ so that
\[ \delta_* \leq \bar{\delta} \ \hbox{ and } \ 4C_1\frac{1}{1-\mu^\alpha}\delta_* \leq 1 \]
where $\bar{\delta}$ is the universal constant from \lref{DSLIPP2}.

It suffices to consider the case
\begin{equation}\label{e.initialsigmaineq}
\sigma_0 := (R/R_0)^{\beta} \leq c_0 \delta_*^2
\end{equation}
 where $c_0$ is the universal constant from \lref{DSLIPP2} and $\delta_*$ is defined above.

   We claim that for all $j$ such that $\mu^jR \geq 1$ there are $q_j$ such that
\begin{equation}\label{e.induction1}
\|\grad u - (q_j + \grad \chi_{q_j}(x))\|_{\underline{L}^2(B_{\mu^j R})} \leq \delta_* \mu^{\alpha j}   |q_j|
\end{equation}
with
\begin{equation}\label{e.induction2}
 |q_{j+1} - q_j| \leq C_1\delta_* \mu^{\alpha j} |q_j| \ \hbox{ and } \ 1 \leq |q_j| \leq 2|q|.
 \end{equation}
 
 We will prove this by induction.  By the hypothesis of the theorem \eref{induction1} and \eref{induction2} hold for $j=0$.  Suppose that \eref{induction1} and \eref{induction2} hold for all $0 \leq j \leq k$.  

Note that we have
\[ |q_{k+1}-q| \leq \sum_{j=0}^{k}|q_{j+1}-q_j| \leq  2C_1\frac{1}{1-\mu^\alpha}\delta_* |q|.\]
In particular 
\[ |q_{k+1}| \leq (1+2C_1\frac{1}{1-\mu^\alpha}\delta_*)|q| \leq 2|q|\]
because of the choice of $\delta_*$, and, similarly,
\[ |q_{k+1}| \geq (1-2C_1\frac{1}{1-\mu^\alpha}\delta_*)|q| \geq \frac{1}{2}|q| \geq 1.\]
Thus we have established the second part of \eref{induction2} for $k+1$.

If $ \mu^{k+1}R \leq 1$ then we are already done.  Otherwise $u$ is in the set up of \lref{DSLIPP2} in $B_{\mu^kR}$ with \eref{induction1} and
\begin{equation}\label{e.hypotheses-check-1}
 \delta_k = \delta_* \mu^{\alpha k} \ \hbox{ and } \ \sigma_k =  (r_k/R_0)^\beta =  \mu^{k\beta} \sigma_0.
 \end{equation}
Note that $\delta_k \leq \delta_* \leq \bar{\delta}$ and
\begin{equation}\label{e.hypotheses-check-2}
 \sigma_k = \mu^{2k\alpha} \sigma_0 = \delta_k^2\delta_*^{-2}\sigma_0 \leq c_0\delta_k^2
 \end{equation}
by \eref{initialsigmaineq}. Thus all the assumptions of \lref{DSLIPP2} hold and we find that there is $q_{k+1}$ such that
\[ \|\grad u - (q_{k+1} + \grad \chi_{q_{k+1}}(x))\|_{\underline{L}^2(B_{\mu^{k+1}R})} \leq \delta \mu^{\alpha(k+1)}   |q_{k+1}|\]
and with
\[ |q_{k+1} - q_k| \leq C_1 \delta_k |q_k| = C_1 \mu^{\alpha k} \delta_*|q_k|.\]

Then we have the result of the theorem because \eref{induction1} and \eref{induction2} imply
\[ \|\grad u\|_{\underline{L}^2(B_{\mu^kR})} \leq C|q_j| \leq C|q|\]
and the estimate for intermediate values $\mu^{k+1}R \leq r < \mu^k R$ also holds up to an additional factor of $\mu^{-d/2}$ which is also universal.

\end{proof}
\begin{remark}\label{r.dini-mod-replace-func-lip}
The previous proof will also go through when the almost minimality hypothesis \eref{almostminimal} is replaced by
\begin{equation}\label{e.almostminimal-omeg}
 J(u,B_r) \leq (1+\omega(r/R_0))J(v,B_r) + \omega(r/R_0)|B_r| 
 \end{equation}
 (for $v$ the $a$-harmonic replacement in $B_r$) as long as $\omega^{\frac{1}{2}}$ is a Dini modulus, in the sense that $\int_0^1\omega(t)^{\frac{1}{2}}\frac{dt}{t} < +\infty$, with $\omega(1) \leq 1$.  The inductive hypotheses \eref{induction1} and \eref{induction2} with $\delta_k=\delta_* \mu^{k\alpha}$ are replaced by 
 \[\delta_k = \delta_*\omega(R/R_0)^{-\frac{1}{2}}\omega(\mu^kR/R_0)^{\frac{1}{2}}.\]  This choice is so that later in \eref{hypotheses-check-1}, when $\sigma_k = \omega(\mu^kR/R_0)$,
  we will have
   \[\sigma_k = \delta_k^2\delta_*^{-2} \omega(R/R_0) \leq c_0\delta_k^2\]
   as long as $R/R_0$ small enough depending on $\omega$ so that $\omega(R/R_0) \leq c_0 \delta_*^2$.  The purpose of the Dini modulus assumption is to guarantee that $\sum_{k=1}^\infty \delta_k < + \infty$.
\end{remark}
\subsection{Step 3: Iteration}

\begin{theorem}
If $u$ is a minimizer of $J$ in $B_R(0)$ then for all $1 \leq r \leq R$
\[ \|\grad u\|_{\underline{L}^2(B_{r}(0))} \leq C(\|\grad u\|_{\underline{L}^2(B_R(0))}+1).\]
\end{theorem}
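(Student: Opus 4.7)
The plan is to iterate the dichotomy of \lref{DSLIPP1} along a geometric sequence of scales $r_j = \eta^j R$ and close the argument via the interior improvement \lref{DSinteriorLIP} as soon as the close-to-planar alternative triggers. Fix $\delta = \delta_*$ where $\delta_*$ is the universal constant from \lref{DSinteriorLIP}; \lref{DSLIPP1} then supplies universal constants $\eta, M, \sigma_0$ depending only on this choice. Since $u$ is a minimizer, $\sigma = 0 \leq \sigma_0$ automatically. I would enlarge $M$ if necessary so that $C_0^{-1} M \geq 2$, ensuring that the lower bound $|q| \geq C_0^{-1} A_j$ provided by \lref{DSLIPP1} delivers the hypothesis $|q| \geq 2$ required by \lref{DSinteriorLIP}.

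Writing $A_j := \|\grad u\|_{\underline{L}^2(B_{r_j})}$, I apply \lref{DSLIPP1} iteratively starting from $A_0 = \|\grad u\|_{\underline{L}^2(B_R)}$. So long as $A_j \geq M$ and $r_{j+1} \geq 1$, exactly one of two outcomes occurs: case (a), $A_{j+1} \leq A_j/2$, or case (b), the existence of some $q$ with $|q|$ comparable to $A_j$ satisfying $\|\grad u - (q + \grad \chi_q)\|_{\underline{L}^2(B_{r_{j+1}})} \leq \delta_* |q|$. In branch (b), \lref{DSinteriorLIP} applies at scale $r_{j+1}$ and yields $\|\grad u\|_{\underline{L}^2(B_r)} \leq C|q| \leq C C_0 A_j$ for every $1 \leq r \leq r_{j+1}$; combined with the halving bound $A_k \leq A_0$ from the preceding chain of case (a) steps and the trivial integration comparison $\|\grad u\|_{\underline{L}^2(B_r)} \leq \eta^{-d/2} A_k$ on the intermediate intervals $r \in [r_{k+1}, r_k]$, this delivers the desired Lipschitz bound at all scales. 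If case (b) never triggers and halving continues all the way until $r_{j+1} < 1$, the halving chain itself furnishes $A_j \leq A_0$ and the same integration controls intermediate scales.

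The main obstacle is the branch in which $A_j$ drops below $M$ at some intermediate step, at which point \lref{DSLIPP1} ceases to apply and one must still rule out blow-up at smaller scales. The resolution is to restart the dichotomy iteration from the first subsequent scale at which the norm again exceeds $M$: the norm at any scale immediately following a sub-$M$ step is bounded by $\eta^{-d/2} M$ via trivial integration, a universal constant, so every restart begins from a universally bounded value. Iterating the restart procedure -- each restart terminating either in a case (b) closing or in a chain to the unit scale with norm universally controlled by a multiple of $M$ -- yields the overall estimate $\|\grad u\|_{\underline{L}^2(B_r(0))} \leq C(A_0 + 1)$ for every $1 \leq r \leq R$, with the additive $+1$ absorbing the universal contributions from $M$ and from the $Q^2 {\bf 1}_{\{u>0\}}$ term in the dichotomy estimates.
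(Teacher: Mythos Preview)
Your approach is essentially the paper's: iterate the dichotomy of \lref{DSLIPP1} along the geometric scales $r_j=\eta^jR$ and close via \lref{DSinteriorLIP} once the planar branch triggers. The paper packages the halving/restart bookkeeping into a single inductive inequality $b_k \leq \eta^{-d/2}M + 2^{-k}b_0$, which is cleaner than your restart description but logically equivalent.

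There is one genuine (if easily fixed) gap: your choice $\delta = \delta_*$ is too large. The second alternative of \lref{DSLIPP1} yields
\[\|\grad u - (q + \grad \chi_q)\|_{\underline{L}^2(B_{\eta r_j})} \leq \delta\, \|\grad u\|_{\underline{L}^2(B_{r_j})} = \delta_* A_j,\]
whereas \lref{DSinteriorLIP} requires the right side to be at most $\delta_* |q|$. Since the dichotomy only guarantees $|q| \geq C_0^{-1} A_j$ with $C_0 \geq 1$, the bound $\delta_* A_j \leq \delta_* |q|$ need not hold; your claimed form of case~(b) therefore does not follow. The remedy is exactly what the paper does: take $\delta = C_0^{-1}\delta_*$ in \lref{DSLIPP1}, so that $\delta A_j = C_0^{-1}\delta_* A_j \leq \delta_* |q|$. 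With that single adjustment your argument goes through.
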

\begin{proof}
 Let $\eta>0$ small enough and $M\geq 2C_0$ large so that the statement of \lref{DSLIPP1} holds with $\delta = C_0^{-1}\delta_*$ constants $C_0$ from \lref{DSLIPP1} and $\delta_*$ from \lref{DSinteriorLIP}.   Define
\[ b_k = \|\grad u\|_{\underline{L}^2(B_{\eta^kR})}. \]

Consider the inequality
\begin{equation}\label{e.lipiteration}
 b_k \leq \eta^{-d/2}M + 2^{-k}b_0.
 \end{equation}
This is true for $k=0$, suppose that the statement holds for $0 \leq k \leq j$.  If $\eta^{j+1}R \leq 1$ we are done so we can assume $\eta^{j+1}R \geq 1$.

If $b_j \leq M$ then
\[ b_{j+1} \leq \eta^{-d/2}b_j \leq  \eta^{-d/2}M \]
so \eref{lipiteration} holds for $k=j+1$.

Otherwise $b_j \geq M$ and so \lref{DSLIPP1} applies and either
\[ b_{j+1} \leq \frac{1}{2}b_j \leq \eta^{-d/2}M + 2^{-(j+1)}b_0,\]
so \eref{lipiteration} holds for $k=j+1$, or
\[ \|\grad u  - (q + \grad \chi_q)\|_{\underline{L}^2(B_{\eta^{j+1}R})} \leq C_0^{-1}\delta_*b_j\]
for some
\[ C_0^{-1}b_j \leq |q| \leq C_0 b_j.\]
Since $b_j \geq M \geq2C_0$ we can apply \lref{DSinteriorLIP} to find
\[ \|\grad u\|_{\underline{L}^2(B_{r})} \leq Cb_j \leq C(\eta^{-d/2}M + 2^{-j}b_0)\]
for all $1 \leq r \leq \eta^{j+1}R$.
\end{proof}

\subsection{Local regularity} Finally we mention that when the $C^{0,\gamma}$ hypothesis is added, \aref{a2}, we can also recover a true Lipschitz estimate from \tref{main1}.

\begin{corollary}
Suppose \aref{Q1}, \aref{a1}, and $a \in C^{0,\gamma}$.  If $u$ is $(R_0,\beta)$-almost minimal in $B_R(0)$ for some $R_0 \geq R > 0$ then
\[ |\grad u(0)| \leq C(1+\|\grad u\|_{\underline{L}^2(B_R)})\]
for $C\geq 1$ depending on $\Lambda$, $d$, $\beta$, $\gamma$, and $[a]_{C^{0,\gamma}}$.
\end{corollary}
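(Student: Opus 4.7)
The plan is to combine the $\underline{L}^2$ gradient bound from the first conclusion of \tref{main1} with a sub-unit-scale Lipschitz estimate that relies on the H\"older regularity of $a$. Assume without loss of generality $R \geq 1$ (the case $R < 1$ is handled by the sub-unit-scale machinery below applied directly to $B_R$). At the unit scale, \tref{main1} yields
\[ \|\grad u\|_{{\underline{L}}^2(B_1(0))} \leq M := C(d,\Lambda,\beta)\bigl(1 + \|\grad u\|_{{\underline{L}}^2(B_R)}\bigr), \]
reducing the task to a pointwise estimate at $0$ given this $\underline{L}^2$ control at the unit scale.

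At sub-unit scales $r \leq 1$ the corrector $\chi_q$ varies only by $O(r|q|)$ on $B_r(0)$, by classical Schauder applied to the corrector equation using $a \in C^{0,\gamma}$, and therefore it plays no essential role in the dichotomy. The lemmas \lref{DSLIPP1}--\lref{DSinteriorLIP} specialize at sub-unit scales to their classical De Silva--Savin formulations with pure linear references $p \cdot x$, where $p \in \R^d$ absorbs $\grad \chi_q(0)$ and the Taylor remainder $\chi_q(x) - \chi_q(0) - \grad \chi_q(0) \cdot x = O(r^{1+\gamma}|q|)$ is a negligible perturbation. Similarly the Avellaneda--Lin large-scale $C^{1,\alpha}$ estimate \tref{hominteriorreg}(2) is replaced at sub-unit scales by the classical Schauder $C^{1,\alpha}$ estimate for divergence-form equations with H\"older coefficients, which holds all the way down to $r \to 0^+$ with constant depending on $\gamma$ and $[a]_{C^{0,\gamma}}$. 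Rerunning the Step 3 iteration of the proof of \tref{main1} at sub-unit scales, starting from the initial $\underline{L}^2$ bound $M$, then gives
\[ \|\grad u\|_{{\underline{L}}^2(B_r(0))} \leq C(1+M) \quad \text{for every} \ 0 < r \leq 1, \]
with $C$ depending additionally on $\gamma$ and $[a]_{C^{0,\gamma}}$. Since this bound holds at every scale, $\grad u$ is essentially bounded on a neighborhood of $0$ and the claimed pointwise estimate follows by Lebesgue differentiation.

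The main obstacle is verifying that the sub-unit-scale dichotomy with pure linear references goes through cleanly, since the original proofs use the restriction $\eta r, \mu r \geq 1$ in a few places. The sensitive step is the ``large slope implies small zero level'' argument in the proof of \lref{DSLIPP2}, which at large scales required $r$ to exceed a universal multiple of the corrector $L^2$-norm. In the sub-unit-scale version the reference $\ell = p \cdot x + c$ is purely linear (the corrector being absorbed as described above), so the geometric comparison ``if $\ell$ is negative somewhere in $B_{3r/4}$ then $|\ell|$ is large on a ball of definite fractional radius'' carries no lower bound on $r$ at all. With this scale-free replacement in place, the iteration is identical to the classical De Silva--Savin one, producing the sub-unit-scale bound above and concluding the proof.
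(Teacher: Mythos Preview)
Your approach is correct but follows a different route from the paper's. You re-run the dichotomy and $C^{1,\alpha}$ iteration at sub-unit scales, replacing the corrected linear references $q\cdot x + \chi_q(x)$ by pure affine functions and the Avellaneda--Lin estimate by classical Schauder; this amounts to a re-derivation of the De~Silva--Savin Lipschitz estimate for almost minimizers with H\"older variable coefficients (a result already available, cf.\ \cite{DavidEngelsteinSVGToro,Trey}). You correctly isolate the one delicate point---the lower bound on $r$ in the ``large slope implies small zero level'' step of \lref{DSLIPP2}---and observe that it disappears once the corrector is absorbed into the linear part. The paper instead takes a shorter coefficient-freezing route: it introduces the frozen functional $\tilde{J}_0(v,B_r)=\int_{B_r}a(0)\grad v\cdot\grad v+Q^2{\bf 1}_{\{v>0\}}\,dx$, observes that $|J(v,B_r)-\tilde{J}_0(v,B_r)|\leq [a]_{C^{0,\gamma}}r^\gamma\|\grad v\|_{L^2}^2$, and chains this with the $(R_0,\beta)$-almost minimality of $u$ for $J$ to obtain almost minimality of $u$ for $\tilde{J}_0$ with a combined multiplicative error $(1+(r/R_0)^\beta)(1+Cr^\gamma)^2$. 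Then \cite{DeSilvaSavin}[Theorem~1.1] is invoked as a black box for the constant-coefficient problem. The paper's argument is shorter and avoids revisiting the internals of the iteration; yours is more self-contained but leaves the reader to fill in that the variable-coefficient small-scale machinery goes through (which it does, though some care is needed since $p\cdot x$ is no longer exactly $a$-harmonic).
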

\begin{proof}
If $R \geq 2 r_0$ then \tref{main1} implies that
\[ \|\grad u\|_{\underline{L}^2(B_{r_0}(x_0))} \leq C(d,\Lambda,\beta)(1+ \|\grad u\|_{\underline{L}^2(B_R)})\]
for all $x_0 \in B_{R/2}$.  If $R \leq 2r_0$ we can just do the below argument in $B_R$ instead of in $B_{r_0}$.

Without loss we can consider the case $x_0 = 0$, define the functional centered at $0$
\[ \tilde{J}_0 (v,U) = \int_U a(0) \grad v \cdot \grad v  + Q(x)^2{\bf 1}_{\{v>0\}}\ dx.\]
Note that
\[ |\tilde{J}_0 (v,B_r(0)) - J (v,B_r(0))| \leq [a]_{C^{0,\gamma}}r^\gamma \| \grad v\|_{L^2(B_r(0) \cap \{v>0\})}^2 \]
by ellipticity the gradient norm on the right hand side can be bounded above by $J(v,B_r)$ or by $\tilde{J}_0(v,B_r)$.  

So we can compute writing $B_r$ for $B_r(0)$
\begin{align*} 
\tilde{J}_0 (u,B_r) &= J(u,B_r) +( \tilde{J}_0 (u,B_r)-J(u,B_r)) \\
&\leq (1+(r/R_0)^\beta)J(v,B_r) +Cr^\gamma J(u,B_r)\\
&\leq (1+(r/R_0)^\beta)(1+Cr^\gamma)J(v,B_r)\\
&\leq (1+(r/R_0)^\beta)(1+Cr^\gamma)^2\tilde{J}_0(v,B_r)
\end{align*}
on the second inequality we used the almost minimality of $u$.  Thus $u$ has an almost minimality property for $\tilde{J}_0$ and so we can apply \cite{DeSilvaSavin}[Theorem 1.1] to get
\[ |\grad u(0)| \leq C(1+\|\grad u\|_{\underline{L}^2(B_{r_0})})\leq C(1+ \|\grad u\|_{\underline{L}^2(B_R)}).\]
\end{proof}
\section{Initial free boundary regularity}
In this section we explain the non-degeneracy at the free boundary of almost minimizers, see \sref{nondegen}.  Some inputs from homogenization theory are needed, and since we want to consider almost minimizers we again follow the arguments introduced by De Silva and Savin \cite{DeSilvaSavin}.  Non-degeneracy then implies a Hausdorff dimension $(d-1)$ estimate of the free boundary, see \sref{Hausdorffdim}.  Together with the Lipschitz estimate proved previously this is sufficient domain regularity to apply quantitative homogenization estimates in $\{u>0\}$, see \sref{recallquanthom} later. 

The following hypotheses on $u  \in H^1(U)$ non-negative will be used in this section.

\begin{enumerate}[label = $\bullet$]
\item (Lipschitz estimate)
\begin{equation}\label{e.lipschitzhyp}
 \|\grad u\|_{L^{\infty}(U)} \leq L.
 \end{equation}
\item ($\sigma$-almost minimal)  There is $\sigma\geq0$ so that for any ball $B_\rho \subset U$
\begin{equation}\label{e.sigmaminhyp}
 J(u,B_\rho) \leq J(v,B_\rho)+\sigma |B_\rho| \ \hbox{ for all } \ v \in u + H^1_0(B_\rho).
 \end{equation}
\end{enumerate}

The interior Lipschitz estimate of almost minimizers has been proved already in \sref{Lipschitz}, and will be considered a hypothesis in this section.

Note that given the Lipschitz estimate the almost minimality condition above follows from the almost minimality of the type \eref{almostminimal} via
\[J(u,B_r) \leq (1+(r/R_0)^\beta)J(v,B_r) \leq J(v,B_r) + (r/R_0)^\beta(\Lambda^2 + L^2)|B_r|  \]
for all $v \in u + H^1_0(B_r)$.

\subsection{Non-degeneracy}\label{s.nondegen} Under \eref{lipschitzhyp} and \eref{sigmaminhyp} we will show that $u$ is non-degenerate at its free boundary.  More precisely, if $\sigma$ is sufficiently small, then for any $x \in \partial \{u>0\}$ and $r>0$ so that $B_r(x) \subset U$ then
\[ \sup_{B_r(x)} u \geq c r.\]

If we were considering minimizers we could follow the original argument of Alt and Caffarelli \cite{AltCaffarelli} with some small inputs from homogenization for the oscillatory operator. Specifically we would use a Lipschitz estimate up to the boundary for the $a$-harmonic function interpolating between $1$ on $\partial B_r$ and $0$ on $\partial B_{r/2}$ in the annulus $B_r \setminus B_{r/2}$. 

To consider almost minimizers we are instead following the line of arguments by De Silva and Savin \cite{DeSilvaSavin}. Still we view the adaptations as fairly natural,  if a bit technical, using $a$-harmonic and $\overline{a}$-harmonic replacements and quantitative homogenization theory in $B_r$.  Note that, importantly, we are not using quantitative homogenization theory for the domain $\{u>0\}$ yet.

\begin{lemma}[Weak non-degeneracy]\label{l.weaknondegen}
Suppose $u>0$ and $L$-Lipschitz in $B_r$ and
\[J(u,B_r) \leq J(v,B_r)+\sigma |B_r| \ \hbox{ for all } \ v \in u + H^1_0(B_r)\]
then there is $\sigma_0$, depending on $L$ and universal parameters, and $c>0$, universal, so that if $\sigma \leq \sigma_0$ then
\[ u(0) \geq c r.\]
Furthermore if there is $x \in \partial B_r(0) \cap \partial \{u>0\}$ then
\[ \sup_{\partial B_{(1-\eta)r}} u \geq (1+\delta)u(0)\]
for some constants $\delta$ and $\eta>0$ depending on $L$ and universal parameters.
\end{lemma}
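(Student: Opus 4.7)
Following De Silva--Savin, I will test the almost minimality of $u$ against its $a$-harmonic replacement $v$ in $B_r$. Since $u > 0$ in $B_r$ and, by the strong maximum principle (noting that $u(0) > 0$ together with $u$ being $L$-Lipschitz forces $u \not\equiv 0$ on $\partial B_r$), also $v > 0$ in $B_r$, the characteristic function contributions cancel in the energy difference, and the almost minimality inequality reduces to $\int_{B_r} a\grad(u-v)\cdot\grad(u-v)\,dx \le \sigma|B_r|$. Poincare yields $\|u-v\|_{\underline{L}^2(B_r)} \le Cr\sqrt\sigma$; combined with the hypothesis $\|\grad u\|_\infty \le L$ and the Avellaneda--Lin interior Lipschitz estimate (\tref{hominteriorreg}(1)) for $v$ (using $\|v\|_{L^\infty(B_r)} \le \|u\|_{L^\infty(\partial B_r)} \le CLr$), $u - v$ is $CL$-Lipschitz on $B_{r/2}$. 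A standard Lipschitz--$L^2$ interpolation then delivers the pointwise bound $\|u-v\|_{L^\infty(B_{r/2})} \le C(L)\,r\,\sigma^{1/(d+2)}$; this is the only step where the full Lipschitz constant $L$ enters, and it controls the $L$-dependence of $\sigma_0$.

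For the first claim, I assume toward a contradiction that $u(0) \le \epsilon r$ for a universal $\epsilon > 0$ to be chosen. For $\sigma \le \sigma_0(L,\Lambda,d)$ the interpolation gives $v(0) \le 2\epsilon r$, and then the Harnack inequality for the non-negative $a$-harmonic $v$ gives $\sup_{B_{r/2}} v \le 2C_H \epsilon r$; a further application of \tref{hominteriorreg}(1) yields the crucial Lipschitz upgrade $\|\grad v\|_{L^\infty(B_{r/4})} \le C\epsilon$ on the smaller ball. I then test almost minimality against $w := u\zeta$, where $\zeta$ is a smooth radial cut-off equal to $1$ outside $B_{r/4}$ and vanishing on $B_{(1-\eta)r/4}$, with $|\grad\zeta| \le C/(\eta r)$ and $\eta \in (0, \tfrac12)$ free. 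Combining the $O(\epsilon)$ Lipschitz bound on $v$ in $B_{r/4}$ with the $L^2$-gradient and pointwise proximity of $u$ and $v$, and with $u \le C\epsilon r$ on $B_{r/4}$, one estimates $\int_A a|\grad w|^2 \le C(\epsilon^2/\eta + \sigma)\,r^d$ on the cut-off annulus $A := B_{r/4}\setminus B_{(1-\eta)r/4}$, while the $Q^2$-saving on $B_{(1-\eta)r/4}$ is at least $c\Lambda^{-2} r^d$. Optimizing $\eta = \epsilon$,
\[ J(u,B_r) - J(w,B_r) \ge c\Lambda^{-2} r^d - C(\epsilon + \sigma)\,r^d, \]
which exceeds $\sigma|B_r|$ for $\epsilon$ a small universal constant and $\sigma \le \sigma_0$. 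The resulting contradiction yields $u(0) \ge cr$ with $c$ universal.

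For the second claim, fix $x^* \in \partial B_r \cap \partial\{u > 0\}$, so $u(x^*) = 0$, and set $\rho = (1-\eta)r$ for $\eta > 0$ small to be chosen. The Lipschitz hypothesis gives $u \le 2L\eta r$ on the spherical cap $S := \partial B_\rho \cap B_{2\eta r}(x^*)$, whose relative surface measure $\mu := |S|/|\partial B_\rho|$ satisfies $\mu \ge c\eta^{d-1}$. Let $v_\rho$ be the $a$-harmonic replacement of $u$ in $B_\rho$. The two-sided comparison of the $a$-harmonic measure $\omega_0$ at $0$ with surface measure on $\partial B_\rho$ (a consequence of De Giorgi--Nash--Moser boundary Harnack for balls, with constants depending only on $d$ and $\Lambda$) gives
\[ v_\rho(0) = \int_{\partial B_\rho} u\,d\omega_0 \le p\,(2L\eta r) + (1-p)\sup_{\partial B_\rho} u, \qquad p := \omega_0(S) \ge c\mu. \]
Applying the first paragraph's interpolation on $B_\rho$ in place of $B_r$ yields $|u(0) - v_\rho(0)| \le C(L)\,r\,\sigma^{1/(d+2)}$. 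Combined with the first claim's lower bound $u(0) \ge cr$, choosing $\eta$ small enough that $2Lp\eta r \le u(0)/4$ and $\sigma$ small enough that the error is $\le u(0)/4$, rearrangement gives
\[ \sup_{\partial B_\rho} u \ge \frac{u(0)/2}{1-p} \ge (1 + c\mu/2)\,u(0), \]
so the second claim holds with $\delta := c\mu/2 > 0$, where both $\eta$ and $\delta$ depend on $L$ and universal parameters.

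The central technical difficulty is ensuring that the $L$-dependence in $\sigma_0$ (introduced by the Lipschitz--$L^2$ interpolation step) does not contaminate the universal constant $c$ in the first claim. This succeeds because Harnack plus \tref{hominteriorreg}(1) reduces the effective Lipschitz constant of $v$ from $O(L)$ to $O(\epsilon)$ on the smaller ball $B_{r/4}$, so that the cut-off competitor's gradient cost scales with $\epsilon^2$ rather than $L^2$. Using \tref{hominteriorreg} requires sufficient scale separation at the relevant step ($r/4 \ge 1$); tracking the interplay of the scales and small parameters $\epsilon$, $\eta$, $\sigma$ is the main bookkeeping challenge.
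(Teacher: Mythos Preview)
Your Part 1 argument is correct and in fact cleaner than the paper's: you apply the De Giorgi--Nash--Moser Harnack inequality directly to the $a$-harmonic replacement $v$, whereas the paper passes through a further $\bar a$-harmonic replacement $\bar h$ (and an $a(0)$-harmonic replacement for small $r$) in order to invoke Harnack for a constant-coefficient operator. Since DGNM Harnack already holds for the non-negative $a$-harmonic $v$ with constants depending only on $d,\Lambda$, your route avoids the homogenization error term $r^{-\alpha/(d+2)}$ and the associated large/small $r$ case split. The paper also tests the cutoff against $h\phi$ rather than $u\zeta$, but this is a cosmetic difference once the $O(\epsilon)$ control on $v$ (and hence on $u$ via the interpolation) is in hand. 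Your acknowledged concern about $r/4\ge 1$ is not really a restriction: under the paper's standing hypothesis $a\in C^{0,1}$, the interior Lipschitz estimate of \tref{hominteriorreg}(1) holds at all scales with a universal constant (Schauder at small scales, Avellaneda--Lin at large).

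Part 2, however, contains a genuine gap. You assert that the $a$-harmonic measure $\omega_0$ at the center of $B_\rho$ is two-sidedly comparable to surface measure on $\partial B_\rho$ with constants depending only on $d,\Lambda$, ``a consequence of De Giorgi--Nash--Moser boundary Harnack.'' This does not follow: such a comparison is equivalent to a two-sided bound on the Poisson kernel, and for merely bounded measurable $a$ the Poisson kernel need not be bounded below (or even be a bounded function). The paper confronts exactly this point. For bounded radii $r\le r_0$ it proves the Poisson kernel lower bound as a separate lemma (\lref{poissonkernel}), explicitly invoking the Lipschitz regularity of $a$---and flags this as the one place in the paper where $a\in C^{0,1}$ rather than $C^{0,\gamma}$ is used. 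For large $r$ it instead transfers the Caffarelli argument to the $\bar a$-harmonic replacement $\bar h$, for which the Poisson kernel is explicit, and then moves the conclusion back to $u$ via the $L^\infty$ proximity. Your argument can be repaired by following either of these routes, but the one-line justification via DGNM does not suffice. You actually only need the weaker statement $\omega_0(S)\ge c(\eta,d,\Lambda)>0$ for a cap of \emph{fixed} relative size $\eta$, which is more accessible (e.g.\ via doubling of harmonic measure and a Harnack chain from a corkscrew point), but even that requires more than what you have written and will not give the linear bound $p\ge c\mu$ you claim.
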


For the second part we will need the following technical Lemma.
\begin{lemma}\label{l.poissonkernel}
Given $r_0 \geq 1$ there is a constant $c$ depending on $r_0$ and universal parameters so that the Poisson kernel at $0$,  $p(y) = P_{B_r}(0,y)$, for the operator $- \grad \cdot(a(x) \grad\cdot)$ in $B_r(0)$ for any $0 < r \leq r_0$ has
\[ p(y) \geq c \frac{1}{|\partial B_r|}.\]
\end{lemma}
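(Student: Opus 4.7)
The plan is to rescale to the unit ball, express the Poisson kernel via the Green's function, and reduce to a Hopf lemma estimate using the Lipschitz regularity of $a$. First I would rescale: set $\tilde{x} = x/r$ and $\tilde{a}(\tilde{x}) = a(r\tilde{x})$. The rescaled operator $-\grad \cdot (\tilde{a} \grad \cdot)$ on $B_1$ has the same ellipticity constant $\Lambda$ and Lipschitz seminorm bounded by $r_0\|\grad a\|_\infty$. The Poisson kernels are related by $p(0,y) = r^{-(d-1)}\tilde{p}(0,y/r)$, so the claim reduces to a uniform lower bound $\tilde{p}(0,\tilde{y}) \geq c$ on $\partial B_1$, with $c$ depending only on $\Lambda$, $d$, and $r_0\|\grad a\|_\infty$.

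Next, let $G(0,\cdot)$ be the Dirichlet Green's function of $-\grad \cdot (a \grad \cdot)$ on $B_1$ with pole at the origin. The standard Green's identity for symmetric divergence-form operators identifies $p(0,y) = -\nu(y) \cdot a(y) \grad_y G(0,y)$ for $y \in \partial B_1$. Since $G(0,\cdot) = 0$ on $\partial B_1$ and $G(0,\cdot) > 0$ inside, $\grad_y G(0,y)$ points along $-\nu(y)$, so $p(0,y) = |\grad_y G(0,y)|\,(\nu \cdot a\nu) \geq \Lambda^{-1}|\grad_y G(0,y)|$. Hence it suffices to prove $|\grad_y G(0,y)| \geq c$ uniformly on $\partial B_1$.

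The final step is a Hopf-type barrier argument. The pole of $G$ at the origin, together with a Harnack chain argument (via the Moser Harnack inequality for divergence-form operators with bounded measurable coefficients), yields a universal lower bound $G(0,\cdot) \geq m > 0$ on $\partial B_{1/2}$. In the annulus $A = B_1 \setminus \overline{B}_{1/2}$, $G(0,\cdot)$ is $a$-harmonic, bounded below by $m$ on $\partial B_{1/2}$, and vanishes on $\partial B_1$. The Lipschitz hypothesis on $a$ allows us to rewrite $-\grad \cdot (a \grad v) = -a_{ij}\partial_{ij}v - (\partial_i a_{ij})\partial_j v$ in non-divergence form with a bounded first-order coefficient. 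The classical barrier $w(x) = c_1(e^{-\alpha|x|^2} - e^{-\alpha})$ is then a subsolution on $A$ for $\alpha$ chosen large in terms of $\Lambda$ and $\|\grad a\|_\infty$: the second-order term (of size $\alpha^2$) dominates both the trace term (of size $\alpha$) and the first-order correction (of size $\alpha\|\grad a\|_\infty$). Choosing $c_1$ small so that $w \leq G(0,\cdot)$ on $\partial A$, the comparison principle gives $w \leq G(0,\cdot)$ in $A$, and taking the inner normal derivative at $\partial B_1$ yields $|\grad_y G(0,y)| \geq |\grad w(y)| = 2\alpha c_1 e^{-\alpha} > 0$.

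The main obstacle is the Hopf barrier step, which is precisely where the Lipschitz assumption enters, in order to control the first-order term arising when passing to non-divergence form. As \rref{microscopic-a-reg} already notes, $C^{0,\gamma}$ regularity would suffice via the sharper Hopf lemma of \cite{ApushkinskayaNazarov}, but the Lipschitz case admits the elementary barrier argument above and this is the route I would pursue.
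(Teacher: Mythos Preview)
Your proposal is correct and follows the same overall architecture as the paper: rescale to the unit ball, pass to the Green's function, and finish with a Hopf-type barrier in an annulus using the Lipschitz hypothesis on $a$ to write the operator in non-divergence form.

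The one substantive difference is in how the interior lower bound on $G(0,\cdot)$ is obtained. You invoke it directly via the pole behaviour and Harnack (equivalently, the Littman--Stampacchia--Weinberger two-sided bounds, which need only bounded measurable coefficients), and then carry out the explicit exponential Hopf barrier in $B_1\setminus \overline{B}_{1/2}$. The paper instead freezes coefficients: it compares $G(\cdot,0)$ with the Green's function $G_0(\cdot,0)$ of the constant-coefficient operator $-\grad\cdot(a(0)\grad\cdot)$, writes $w=G-G_0$, and estimates $|w(x)|\le Cr^\gamma|x|^{2-n+\gamma}$ via gradient Green's function bounds and the modulus of $a$, so that $G(x,0)\ge (c-Cr^\gamma|x|^\gamma)|x|^{2-n}$ for $|x|\le c r_0^{-1}$. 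The actual Hopf barrier step is then left implicit (``a standard Hopf Lemma barrier argument''). Your route is the more textbook one and is fully self-contained; the paper's freezing argument is a somewhat indirect way to reach the same interior lower bound, though it has the mild advantage of making transparent exactly how the $r_0$-dependence enters the constant.
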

The proof of is postponed to the Appendix, \sref{poissonkernel}. This is the only place that we use the assumption that $a \in C^{0,1}$ as opposed to $C^{0,\gamma}$.

\begin{proof}[Proof of \lref{weaknondegen}]
Note: constants in this proof may depend on universal parameters and also on $L$.  We will try to explicitly indicate the $L$ dependence each time it appears, afterwards constants which depend on $L$ will be denoted with $k$ and $K$ and constants which are universal and don't depend on $L$ will be denoted $c$ and $C$.  The values of $k$ and $K$ may change from line to line as is the convention with $c$ and $C$.

1. Let $h$ be the $a$-harmonic replacement of $u$ in $B_r$.   Then, since $u>0$ in $B_r$ and $h > 0$ in $B_r$ (strong maximum principle),
\[ J(h,B_r) = E(h,B_r) +\int_{B_r} Q(x)^2 \ dx \leq E(u,B_r)+\int_{B_r} Q(x)^2 \ dx = J(u,B_r) \leq J(h,B_r)+\sigma |B_r| \]
so subtracting the two energies and using the $a$-harmonic replacement property we find
\[ \int_{B_r} |\grad u - \grad h|^2 dx \leq C\sigma |B_r|.\]
By Poincar\'e
\[ \int_{B_r} (u-h)^2 \ dx \leq C\sigma r^2|B_r|.\]
By the interior Lipschitz estimate \tref{hominteriorreg} for $a$-harmonic functions and maximum principle
\[ \|\grad h\|_{L^{\infty}(B_{\frac{3r}{4}})} \leq \frac{C}{r}\textup{osc}_{B_r} h \leq  \frac{C}{r}\textup{osc}_{B_r} u \leq CL.\]
So 
\begin{equation}\label{e.uminush}
 \| u - h\|_{L^\infty(B_{r/2})}^{d+2} \leq K\sigma r^2|B_r|.
 \end{equation}

Let $\phi$ be a standard smooth cutoff $\equiv1$ outside $\partial B_{r/2}$ and $\equiv0$ in $B_{r/4}$ with $|\grad \phi | \leq C/r$. Using, as earlier, that $h$ and $u$ are positive in $B_r$
\[ J(h,B_r) \leq J(u,B_r) \leq J(h\phi,B_r)+\sigma |B_r|.\]
From this point we argue differently for large and small $r>0$.  

For $r \geq r_0$ large to be specified let $\overline{h}$ be the $\overline{a}$-harmonic replacement of $h$ in $B_r$ then, by \tref{quanthomregulardomain},
\[|E(h,B_r) - E_0(\overline{h},B_r)|+\frac{1}{r^2}\|h- \overline{h}\|_{L^2(B_r)}^2 \leq CL^2r^{-\alpha}|B_r|. \]
Note the same inequality holds for $J(h,B_r) - J_0(\overline{h},B_r)$ by positivity. By similar arguments to above for \eref{uminush}, using the interior Lipschitz estimate,
\begin{equation}\label{e.hminusbarh}
 \|h-\overline{h}\|_{L^\infty(B_{r/2})}^{d+2} \leq K r^{2-\alpha}|B_r|.
 \end{equation}

Since $\overline{h} \geq 0$ solves a constant coefficient equation in $B_r$ (i.e. it is harmonic under a linear non-degenerate change of variables) we have by Harnack inequality
\[ \|\bar{h}\|_{L^{\infty}(B_{3r/4})} \leq C\bar{h}(0)\]
and so for $h$ we have
\[ \|h\|_{L^{\infty}(B_{3r/4})}\leq Ch(0) + K r^{1-\frac{\alpha}{d+2}}.\]
 and also using the interior Lipschitz estimate for $a$-harmonic functions \tref{hominteriorreg}
  \[\|\grad h\|_{L^\infty(B_{r/2})} \leq Cr^{-1}\|h\|_{L^\infty(B_{2r/4})} \leq Cr^{-1}h(0) + K r^{-\frac{\alpha}{d+2}}.\]
So, using several of the previously established inequalities,
\begin{align}
 E(h,B_r)+\int_{B_r} Q(x) \ dx &\leq  J(h,B_r) \notag\\
 &\leq J(h\phi,B_r)+\sigma |B_r| \notag \\
 &=E(h\phi,B_r)+\int_{B_r \setminus B_{r/4}} Q(x) \ dx +\sigma |B_r| \notag\\
 &\leq E(h,B_r)+\int_{B_r \setminus B_{r/4}} Q(x) \ dx \notag \\
 & \quad \quad \cdots +\left[C(\frac{1}{r^2}\|h\|_{L^{\infty}(B_{r/2})}^2 +\|\grad h\|^2_{L^\infty(B_{r/2})})+\sigma\right]|B_r|\notag\\
  &\leq E(h,B_r)+\int_{B_r \setminus B_{r/4}} Q(x) \ dx  +\left(C\frac{1}{r^2}h(0)^2+Kr^{-\frac{\alpha}{d+2}}+\sigma\right)|B_r|\notag\label{e.phicutoffh}
 \end{align}

Rearranging this inequality we find
\[ h(0)^2 \geq c r^2(\frac{1}{|B_r|}\int_{B_{r/4}} Q(x) dx - K r^{-\frac{\alpha}{d+2}}-\sigma) \geq cr^2(\Lambda^{-1}4^{-d} - Kr^{-\frac{\alpha}{d+2}}-\sigma)\]

So as long as $Kr^{-\frac{\alpha}{d+2}}+\sigma$ is sufficiently small depending on universal parameters we find
\[ h(0) \geq c r.\]
Then plugging in \eref{uminush} and \eref{hminusbarh}
\[ u(0) \geq (c- K\sigma^{\frac{1}{d+2}} - Kr^{-\frac{\alpha}{d+2}})r \geq \frac{1}{2}c r\]
This holds for $r \geq r_0$ and $\sigma \leq \sigma_0$ where $r_0$ and $\sigma_0$ depend on universal constants and on the Lipschitz constant $L$.  

Next we argue for $r \leq r_0$.  Note that we can actually argue for $r \leq r_1$ small, to be determined, and then for $r_1 \leq r \leq r_0$ we simply use that $B_{r_1} \subset B_r$ and so 
\[ u(0) \geq c r_1 \geq c \tfrac{r_1}{r_0} r\]
and the additional factor $r_1/r_0$ still depends only on $L$ and universal parameters.

 The small $r_1$ case follows a very similar line of argument to the previous using the $a(0)$-harmonic replacement $\tilde{h}$ in $B_r$ (instead of $\overline{a}$-harmonic replacement) along with the estimate
 \[ \|h - \tilde{h}\|_{\underline{L}^2(B_r)}^2 \leq Cr^2\|\grad(h-\tilde{h})\|_{\underline{L}^2(B_r)}^2\leq CL^2r^2r^{2\gamma} |B_r|\]
 which follows from the standard energy estimate
 \[ \|\grad(h-\tilde{h})\|_{L^2(B_r)} \leq C\|a - a(0)\|_{L^\infty(B_r)}\|\grad \tilde{h}\|_{L^2(B_r)}\leq Cr^{\gamma}(\int_{\partial B_r}|u||\grad u| )^{1/2} \leq Cr^\gamma L|B_r|^{1/2}\]
 using that $a$ is $C^{0,\gamma}$.

2. Since $u$ is Lipschitz in $B_r$, $\inf_{B_r} u = 0$, and $u(0) \geq cr$ (from the first part of the proof) we can also conclude that $u(x) \leq \frac{1}{4}u(0)$ on a constant $k(L)$ fraction of $\partial B_{(1-\eta)r}$ for some $\eta(L) > 0$.  Since
 \[ \sup_{B_{(1-\eta)r}} |u - \bar{h}| \leq K(\sigma^{\frac{1}{d+2}} + r^{-\frac{\alpha}{d+2}})r \leq K(\sigma^{\frac{1}{d+2}} + r^{-\frac{\alpha}{d+2}})u(0)\]
 when $\sigma \leq \sigma _0(L)$ and $r \geq r_0(L)$ also $\bar{h}(x) \leq \frac{1}{2}\bar{h}(0)$ on a constant fraction of $\partial B_{(1-\eta)r}$.  Then by the  Poisson kernel formula and the argument in Caffarelli \cite{CaffarelliIII}[Lemma 7] we must have 
 \[ \sup_{x \in B_{(1-\eta)r}} \bar{h}(x) \geq (1+2\delta) \bar{h}(0)\]
 and choosing $\sigma_0$, and $r_0$ smaller if necessary we get the same for $u$ via
\begin{align*}
 \sup_{x \in B_{(1-\eta)r}} u(x) &\geq \sup_{x \in B_{(1-\eta)r}} \bar{h}(x) - K(\sigma^{\frac{1}{d+2}} + r^{-\frac{\alpha}{d+2}})u(0) \\
 &\geq (1+2\delta) \bar{h}(0)-K(\sigma^{\frac{1}{d+2}} + r^{-\frac{\alpha}{d+2}})u(0) \\
  &\geq (1+2\delta) u(0)-K(\sigma^{\frac{1}{d+2}} + r^{-\frac{\alpha}{d+2}})u(0) \\
  & \geq (1+\delta) u(0).
 \end{align*}
 For $r \leq r_0$ we just do $a$-harmonic replacement $h$ with the estimate
 \[ \sup_{B_{(1-\eta)r}} |u - h| \leq K\sigma^{\frac{1}{d+2}}r \leq K\sigma^{\frac{1}{d+2}}u(0)\]
 and then apply the Poisson kernel lower bound \lref{poissonkernel}, depending on $r_0(L)$, and the same argument from \cite{CaffarelliIII}[Lemma 7] 
 \[ \sup_{x \in B_{(1-\eta)r}} h(x) \geq (1+2\delta) h(0)\]
 where $\eta$ is the same as above, and $\delta$ will now depend on the Poisson kernel lower bound from \lref{poissonkernel}.  Using the supremum $|u-h|$ estimate as before and choosing $\sigma_0$ smaller again if necessary we conclude.
\end{proof}

\begin{lemma}[Strong non-degeneracy]\label{l.nondegen}
Suppose $u$ is $L$-Lipschitz \eref{lipschitzhyp}, and satisfies \eref{sigmaminhyp} in $B_r(0)$.  There are $\sigma_0,c>0$ depending on $L$ and universal parameters so that if $ 0 < \sigma \leq \sigma_0$, and if $0 \in \partial \{u>0\} \cap U$ then
\[ \sup_{B_r(0) \cap U} u \geq c r.\]
\end{lemma}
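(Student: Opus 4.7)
The plan is to deduce strong non-degeneracy from the weak version \lref{weaknondegen} by a Harnack-chain iteration in the style of Caffarelli \cite{CaffarelliIII}. Argue by contradiction: fix a small $\varepsilon>0$ (to be chosen depending only on $L$ and universal parameters) and suppose $\sup_{B_r(0) \cap U} u < \varepsilon r$. Since $0 \in \partial \{u>0\} \cap U$ and $u$ is continuous, every neighborhood of $0$ inside $U$ meets $\{u>0\}$, so we may pick $x_0 \in B_{r/4}(0) \cap U$ with $u(x_0) > 0$. The idea is to iteratively climb in the direction where $u$ grows --- as provided by the second statement of \lref{weaknondegen} --- until $u(x_k)$ is forced to exceed $\varepsilon r$.

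Construct inductively a sequence $\{x_k\}$ as follows. Let $d_k = \dist(x_k, \{u=0\})$; by positivity of $u$ at $x_k$ and the definition of $d_k$, we have $u > 0$ in $B_{d_k}(x_k)$ and $\partial B_{d_k}(x_k)$ meets $\partial\{u>0\}$. As long as $B_{d_k}(x_k) \subset B_r(0) \cap U$, apply \lref{weaknondegen} in $B_{d_k}(x_k)$, which is valid once $\sigma \leq \sigma_0(L)$, to obtain simultaneously
\[ u(x_k) \geq c_1 d_k \ \hbox{ and } \ x_{k+1} \in B_{(1-\eta)d_k}(x_k) \ \hbox{ with } \ u(x_{k+1}) \geq (1+\delta) u(x_k). \]
Iterating yields $u(x_k) \geq (1+\delta)^k u(x_0)$, while the Lipschitz hypothesis \eref{lipschitzhyp} combined with $u = 0$ on $\{u=0\}$ gives the reverse comparability $d_k \geq u(x_k)/L$, so $d_k$ also grows geometrically.

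Since $u \leq \varepsilon r$ on $B_r(0) \cap U$, the geometric growth forces the iteration to terminate at some step $k$, meaning $B_{d_k}(x_k)$ escapes $B_r(0) \cap U$; the exit must be through $\partial B_r(0)$ (else $x_k \in \partial U \cap B_r(0)$ and we are already done), so $d_k \geq r - |x_k|$ and hence $u(x_k) \geq c_1(r - |x_k|)$. On the other hand, the total displacement is a geometric sum dominated by its last term:
\[ |x_k - x_0| \leq (1-\eta)\sum_{j=0}^{k-1} d_j \leq \frac{1-\eta}{c_1}\sum_{j=0}^{k-1} u(x_j) \leq \frac{(1-\eta)(1+\delta)}{c_1 \delta} u(x_{k-1}) \leq \frac{C(L)\varepsilon r}{\delta}. \]
Choosing $\varepsilon = \varepsilon(L)$ small so that $C(L)\varepsilon/\delta < 1/8$ forces $|x_k| < r/4 + r/8 < r/2$, and therefore $u(x_k) \geq c_1(r-|x_k|) \geq c_1 r/2$, contradicting $\sup u < \varepsilon r$ once $\varepsilon < c_1/4$.

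The main obstacle is the bookkeeping in the iteration: one must verify that $B_{d_j}(x_j) \subset B_r(0) \cap U$ at every intermediate step so that \lref{weaknondegen} genuinely applies, and choose $\varepsilon$ universally small enough (depending on $L$) that the geometric sum of displacements cannot drag $x_k$ out of $B_{r/2}(0)$ before the geometric growth of $u(x_k)$ saturates the a-priori upper bound $\varepsilon r$. The parameter dependencies $\sigma_0 = \sigma_0(L)$ and $c = c(L)$ are inherited transparently from the weak non-degeneracy statement.
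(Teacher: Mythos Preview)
Your approach is exactly the Harnack-chain argument of Caffarelli \cite{CaffarelliIII} that the paper invokes, and the contradiction setup together with the geometric-sum displacement bound is correct. One small point to patch: at the terminal step $k$ you write ``hence $u(x_k) \geq c_1(r-|x_k|)$,'' but $B_{d_k}(x_k)$ has just escaped $B_r(0)$, so \lref{weaknondegen} does not apply there directly; either apply it instead in the smaller ball $B_{r-|x_k|}(x_k)\subset B_r(0)$ (on which $u>0$ since $r-|x_k|<d_k$), or simply observe that $d_k\leq |x_k|$ (because $0\in\{u=0\}$) already contradicts $d_k>r-|x_k|$ once you have shown $|x_k|<r/2$.
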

\begin{proof}
As in \cite{DeSilvaSavin}[Lemma 3.5] the proof follows a standard argument applying \lref{weaknondegen}, see originally \cite{CaffarelliIII}[Lemma 7]. We just need the following claim constructing a polygonal chain along which $u$ grows linearly.

Claim: There are $\delta>0$ small and $C \geq 1$ depending on $L$ and universal constants so that: given $x_0 \in \{u>0\} \cap B_r$ (near the origin) there is a sequence $x_k \in B_r \cap \{u>0\}$ with
\[ u(x_{k+1}) \geq (1+\delta)u(x_k)\]
and
\[ |x_{k+1}-x_k| =  cd(x_k,\partial\{u>0\}).\]

This is exactly what we proved in the second part of \lref{weaknondegen}.
\end{proof}
\begin{remark}\label{r.harnack-chain-comment}
Note that Caffarelli's Harnack chain argument requires starting from a possibly arbitrarily small radius ball so we do need \lref{weaknondegen} for all values of $r >0$ and so we do need the $a \in C^{0,\gamma}$ assumption here.  Maybe that assumption could be removed here by using some stronger notion of ``bulk" free boundary point.
\end{remark}

\subsection{Hausdorff dimension of the free boundary}\label{s.Hausdorffdim}

In this section we recall estimates of the Hausdorff dimension / measure of $\partial \{u>0\}$.  
\begin{lemma}[Free boundary strip energy bound]\label{l.bdrystripenergy}
Suppose that $u \in H^1(B_{2r})$ with $r \geq 1$ has
\[ J(u,B_{2r}) \leq J(v,B_{2r})+\sigma|B_{2r}| \ \hbox{ for all } \ v \in u + H^1_0(B_{2r}) \]
Then for $1 \leq t \leq r$
\[ J(u,\{0 < u \leq t\} \cap B_{r}) \leq C\left[\sigma+\frac{t}{r}(1+\|\grad u\|_{\underline{L}^2(B_{2r})})\right]|B_r|\]
with $C \geq 1$ universal.
\end{lemma}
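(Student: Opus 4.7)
The plan is the standard ``push-down'' competitor argument adapted to the variable-coefficient setting. First I would introduce a smooth radial cutoff $\phi\in C^\infty_c(B_{2r})$ with $\phi\equiv 1$ on $B_r$, $\phi\equiv 0$ outside $B_{3r/2}$, and $|\grad\phi|\leq C/r$. Then I would define the competitor
\[ v(x) = (u(x) - t\phi(x))_+, \]
which agrees with $u$ outside $B_{3r/2}$ (since $\phi=0$ and $u\geq 0$), and on $B_r$ satisfies $v = (u-t)_+$, so $\{v>0\}\cap B_r = \{u>t\}\cap B_r$. Thus $v \in u + H^1_0(B_{2r})$ is admissible in the almost-minimality inequality.

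Next I would split the ball $B_{2r}$ into $A_1=\{u>t\phi\}$ and $A_2=\{0<u\leq t\phi\}$ (the set $\{u=0\}$ contributes nothing). On $A_1$ one has $v=u-t\phi>0$, $\grad v=\grad u - t\grad\phi$, and the indicator terms cancel, leaving
\[ \grad u\cdot a\grad u - \grad v\cdot a\grad v = 2t\,\grad u\cdot a\grad\phi - t^2\,\grad\phi\cdot a\grad\phi. \]
On $A_2$ one has $v\equiv 0$, so this contributes the full integrand $\grad u\cdot a\grad u + Q^2$. Subtracting the two energies and using the almost-minimality assumption therefore gives
\[ \int_{A_2}\bigl(\grad u\cdot a\grad u + Q^2\bigr)\,dx \leq \sigma|B_{2r}| + \int_{A_1\cap B_{3r/2}}\bigl(t^2\,\grad\phi\cdot a\grad\phi - 2t\,\grad u\cdot a\grad\phi\bigr)\,dx. \]

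The two ``cost'' terms on the right are bounded by elementary means: the second, via ellipticity, Cauchy--Schwarz, and $|\grad\phi|\leq C/r$, by
\[ C\frac{t}{r}\,\|\grad u\|_{{\underline L}^1(B_{2r})}|B_{2r}| \leq C\frac{t}{r}\|\grad u\|_{{\underline L}^2(B_{2r})}|B_r|, \]
and the first, by $C(t/r)^2 |B_r|\leq C(t/r)|B_r|$ using $t\leq r$. Finally, since $\phi\equiv 1$ on $B_r$ one has $\{0<u\leq t\}\cap B_r \subset A_2$, so by ellipticity $J(u,\{0<u\leq t\}\cap B_r)$ is controlled by the left-hand side. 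Combining everything gives the claimed estimate. The only mild subtlety is bookkeeping between the ball $B_r$ (on which $\phi=1$) and the annular region $B_{3r/2}\setminus B_r$ where the cutoff costs are paid; there is no real analytic obstacle beyond that.
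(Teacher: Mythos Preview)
Your argument is correct and follows essentially the same ``push-down'' competitor strategy as the paper (which in turn refers to \cite{VelichkovNotes}). The only cosmetic difference is the exact form of the competitor: the paper takes $v=(1-\phi)(u-t)_++\phi u$ with $\phi=0$ on $B_r$ and $\phi=1$ near $\partial B_{2r}$, whereas you take $v=(u-t\phi)_+$ with $\phi=1$ on $B_r$; both cut $u$ down by $t$ on $B_r$, pay a gradient cost of order $t/r$ in the annulus, and yield the same estimate after the same bookkeeping.
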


\begin{proof}
The proof is standard, see \cite{VelichkovNotes}[Lemma 5.6], and only uses the ellipticity of the coefficient fields $a(x)$ and $Q(x)$. The idea is to use the following energy competitor: take $0 \leq \phi \leq 1$ be a smooth cutoff function $\phi = 0$ in $B_{r}$, $\phi = 1$ in a neighborhood of $\partial B_{2r}$ and $|\grad \phi| \leq C/r$ and define
\[ v(x) = (1-\phi(x)) (u(x) - t)_+ + \phi(x) u(x).\]
Using the almost minimality property and some computations give the result.
\end{proof}

\begin{lemma}[Free boundary Hausdorff dimension]\label{l.boxcovering}
Suppose that $u$ Lipschitz with constant $L \geq 1$ and $\ell$-non-degenerate in $B_{2r}$.   Then there is a universal $C \geq 1$ so that for any covering $(\Box_i)_{i \in I}$ of $\partial \{u>0\} \cap B_{r}$ by almost disjoint boxes of side length $1 \leq t \leq c(d)r$
\[ \#I \leq C(d,\Lambda)\left(\tfrac{L}{\ell}\right)^d t^{-d}J(u,\{ 0 < u \leq c(d)tL\} \cap B_{2r})\]
\end{lemma}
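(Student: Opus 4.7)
The plan is to discard the gradient contribution to $J$ and reduce to a lower bound on the measure of the strip $\{0<u\le c(d)tL\}\cap B_{2r}$, then produce that lower bound box-by-box via non-degeneracy combined with Lipschitz, and finally sum using the bounded overlap of enlarged balls. Concretely, since $Q^2\ge\Lambda^{-2}$ by \aref{Q1},
\[ J(u,S)\;\ge\;\int_{S\cap\{u>0\}}Q(x)^2\,dx\;\ge\;\Lambda^{-2}\,|S\cap\{u>0\}|,\]
so with $S=\{0<u\le c(d)tL\}\cap B_{2r}\subset\{u>0\}$ the target inequality is reduced to
\[ \#I\;\le\;C(d)\bigl(\tfrac{L}{\ell}\bigr)^{\!d}\,t^{-d}\,\bigl|\{0<u\le c(d)tL\}\cap B_{2r}\bigr|.\]

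For each $i\in I$ I would pick a free boundary point $x_i\in \partial\{u>0\}\cap\Box_i\subset B_r$ and fix a dimensional constant $\kappa$ to be tuned; the hypothesis $t\le c(d)r$ will ensure $B_{2\kappa t}(x_i)\subset B_{2r}$. Strong non-degeneracy \lref{nondegen}, applied in $B_{\kappa t}(x_i)$ (valid since $t\ge 1$), yields $y_i\in B_{\kappa t}(x_i)$ with $u(y_i)\ge c\ell\kappa t$. The $L$-Lipschitz bound then propagates strict positivity onto $B_{c\ell\kappa t/(2L)}(y_i)$, which lies inside $B_{2\kappa t}(x_i)$ provided $c\ell/(2L)\le 1$ (automatic since $\ell\le L$). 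On the other hand, $u(x_i)=0$ and the same Lipschitz bound give $u\le 2L\kappa t$ throughout $B_{2\kappa t}(x_i)$. Choosing $c(d):=2\kappa$ so that the strip threshold matches this Lipschitz upper bound, we get $\{u>0\}\cap B_{2\kappa t}(x_i)\subset\{0<u\le c(d)tL\}$, and hence
\[ \bigl|\{0<u\le c(d)tL\}\cap B_{2\kappa t}(x_i)\bigr|\;\ge\;\bigl|B_{c\ell\kappa t/(2L)}\bigr|\;\ge\;c(d)\bigl(\tfrac{\ell}{L}\bigr)^{\!d}\,t^d.\]

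To conclude, the almost disjointness of the $t$-boxes $\Box_i$ together with $x_i\in\Box_i$ implies that the enlarged balls $B_{2\kappa t}(x_i)$ have bounded overlap multiplicity $N(d)$: any point lies in at most as many of them as there are $t$-boxes within distance $(2\kappa+1)t$, a purely dimensional bound. Summing the local mass estimate over $i\in I$ therefore gives
\[ \#I\cdot c(d)\bigl(\tfrac{\ell}{L}\bigr)^{\!d}\,t^d\;\le\;N(d)\,\bigl|\{0<u\le c(d)tL\}\cap B_{2r}\bigr|,\]
which, combined with the initial reduction, yields the desired bound. The only nontrivial point is calibrating $\kappa$ so that the non-degeneracy radius, the positivity ball radius, and the strip cutoff $c(d)tL$ are mutually consistent and the $t\ge 1$ lower bound is respected when invoking \lref{nondegen}; this is bookkeeping rather than a genuine obstacle, which is why the lemma is standard once the Lipschitz estimate \tref{main1} and the strong non-degeneracy \lref{nondegen} are in hand.
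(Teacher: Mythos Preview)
Your argument is correct and follows essentially the same route as the paper: use non-degeneracy at a free boundary point in each box to locate a point of height $\sim\ell t$, use the Lipschitz bound to inflate this to a ball of radius $\sim(\ell/L)t$ on which $u$ is strictly positive, observe via Lipschitz and $u(x_i)=0$ that this ball sits in the strip $\{0<u\le c(d)Lt\}$, then sum using bounded overlap and bound the strip measure below by $J$ via $Q^2\ge\Lambda^{-2}$. The only cosmetic difference is that the paper works with dilated cubes $2\Box_i,3\Box_i$ rather than balls $B_{\kappa t}(x_i)$ centered at free boundary points, and it records bounded overlap for the small positivity balls rather than the large containing balls; these are equivalent bookkeeping choices. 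One minor slip: you cite \lref{nondegen} and add ``valid since $t\ge1$'', but in this lemma $\ell$-non-degeneracy is a \emph{hypothesis}, not a conclusion of \lref{nondegen}, so no lower bound on $t$ is needed for that step.
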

Of course \lref{bdrystripenergy} can then be used to bound the right hand side in the inequality obtained here.
\begin{proof}
Let $\Box$ be a cube with side length $t$ such that $\partial \{u>0\} \cap \Box \neq \emptyset$ and $2 \Box$ be the cube with the same center dilated by a factor of $2$.  Then by the non-degeneracy hypothesis
\[ \sup_{x \in 2 \Box}u(x) \geq c(d)\ell t  \]
By the Lipschitz assumption there is a ball of radius $c(d)\frac{\ell}{L}t$ inside of $3 \Box$ with
\[ u(x) \geq c(d)\ell t \ \hbox{ in } \ B_{c(d)\frac{\ell}{L}t} \subset 3 \Box. \]
For each $\Box_i$ from the collection in the statement let $B_{\frac{\ell}{2L}}(y_i) \subset 3 \Box_i$ be the corresponding cube described above.  Since $\Box_i$ are almost disjoint there is a dimensional constant $C(d)$ so that each $x \in B_r$ is in at most $C(d)$ of the $(B_{\frac{\ell}{2L}}(y_i))_{i \in I}$. Thus
\[ \#I =  \sum_{i \in I} 1 = \sum_{i \in I} t^{-d}|\Box_i| \leq C(d)(\tfrac{L}{\ell t})^d\sum_{i \in J} |B_{c(d)\frac{\ell}{L}t}(y_i)| \leq C(d)(\tfrac{L}{\ell t})^d | \cup_{i \in I}B_{c(d)\frac{\ell}{L}t}(y_i)|\]
and
\[  | \cup_{i \in I}B_{c(d)\frac{\ell}{L}t}(y_i)| \leq \Lambda J(u,\cup_i 3 \Box_i) \leq \Lambda J(u,\{0 < u < c(d)Lt\} \cap B_{r+3\sqrt{d}t})\]
since $t \leq c(d) r$ we have $B_{r+3\sqrt{d}t} \subset B_{2r}$.
\end{proof}

\section{H\"older rate of homogenization}\label{s.suboptimal-hom}

In this section we combine the previous regularity results to show a sub-optimal quantitative homogenization rate for almost minimizers.  We will apply quantitative homogenization results for Dirichlet boundary value problems.

\subsection{Quantitative homogenization of Dirichlet problems}\label{s.recallquanthom}

We recall some results about quantitative homogenization results for Dirichlet boundary value problems in bounded domains $U \subset \R^n$. Recall the energy functionals
\[ E(u,U) = \int_U a(x) \grad u \cdot \grad u \ dx \ \ \hbox{ and } \ E_0(u,U) = \int_U \bar{a} \grad u \cdot \grad u \ dx \]
originally introduced in \sref{homrecall}.

Given a bounded domain $U$ in $\R^d$ we consider a function $u \in H^1(U)$ satisfying
\[ \|\grad u\|_{L^\infty(U)} \leq L\]
in our application $u$ will be an almost minimizer of the functional $J$ and $U$ will be $\{u>0\}$, but that information is not needed for the present statements.

We also consider a function $u_0 \in H^1(U)$ satisfying
\[ \|\grad u_0\|_{L^\infty(U)} \leq L\]
and
\[ E_0(u_0,U) \leq E(v,U) \ \hbox{ for all } \ v \in u_0 + H^1_0(U).\]
The function $u_0$ will arise by doing a $J_0$ minimizer replacement of an almost minimizer and $U$ will be $\{u_0>0\}$, but that information is not needed for the present statements.

We will take advantage of the a-priori Lipschitz estimate.  It is convenient to work directly with the explicit upscalings/downscalings, which usually appear as intermediate tools in quantitative homogenization proofs, instead of with the usual comparison with the $\bar{a}$-harmonic replacement.  See \rref{fixed-domain-hom-comment} below.

{$\circ$ \bf Specific choice of upscaling.}  Let $\varphi : \R \to [0,1]$ be the piecewise linear continuous cutoff function with $\varphi(t)\equiv 0$ for $t \leq 1$, $\varphi \equiv 1$ for $t \geq 2$, and $|\varphi'| \leq 2$.  Let $t \ll r$ be a scale to be determined define
\[ \xi_t(x) = \dashint_{x + B_t} u(y) \ dy\]
and
\begin{equation}
 \bar{u}(x) = \varphi(\tfrac{d(x)}{t}) \xi_t (x) + (1-\varphi(\tfrac{d(x)}{t}))u(x)
 \end{equation}
where $d(x)$ is the distance function to the complement of $U$.  It is easy to check
\[ \|\grad \xi_t \|_{L^\infty(U)} \leq \|\grad u \|_{L^\infty(U)}\]
and
\[ \|u - \bar{u}\|_{L^\infty(U)} \leq \|\grad u \|_{L^\infty(U)}t.\]
Since
\[ \grad \bar{u} = \varphi(\tfrac{d(x)}{t}) \grad \xi_t (x) + (1-\varphi(\tfrac{d(x)}{t}))\grad u(x) + \frac{1}{t}\grad d(x)\varphi'(\tfrac{d(x)}{t})( \xi_t (x) - u(x))\]
and $|\grad d| \leq 1$
\[\|\grad \bar{u}\|_{L^\infty(U)} \leq 2\|\grad u \|_{L^\infty(U)}. \]

{$\circ$ \bf Specific choice of downscaling.}  Let $\varphi : \R \to [0,1]$ be a fixed smooth cutoff function with $\varphi(t)\equiv 0$ for $t \leq 1$, $\varphi \equiv 1$ for $t \geq 2$, and $|\varphi'| \leq C$.  Let $t \ll r$ be a scale to be determined and
\begin{equation}
 \tilde{v}(x) = \varphi(\tfrac{d(x)}{t}) [v(x) +  \chi_{\grad v(x)}(x) \grad v(x)] + (1-\varphi(\tfrac{d(x)}{t}))v(x)
 \end{equation}
where $d(x)$ is the distance function to the complement of $U$ and $\chi_q$ are the correctors defined in \sref{homrecall}.

\begin{proposition}[Quantitative homogenization]\label{p.quanthom}
Suppose that $U$ is a bounded domain and let $u$, $\bar{u}$, $u_0$, and $\tilde{u}_0$ as above. Let $t \geq 1$ be an integer, and let $U_t$ be the union of the $t \Z^d$ lattice cubes which are contained in $U$.   There is $C \geq 1$ depending on $d$ and $\Lambda$ so that
\[ E_0(\bar{u},U) \leq E(u,U)+ CL^2 \left[\frac{1}{t^{\frac{1}{2}}}|U|+|U \setminus U_t|\right] \]
and
\[ \left| E(\tilde{u}_0 ,U) - E_0(u_0,U)\right| \leq  CL^2\left[\frac{1}{t}|U|+|U \setminus U_t|\right].\]
\end{proposition}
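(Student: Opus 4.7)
Both estimates split along the interior $U_{2t} := \{x \in U : d(x,\partial U) \geq 2t\}$ (where $\bar u = \xi_t$ and $\tilde u_0 = u_0 + \chi_{\grad u_0(\cdot)}(\cdot)$ by the cutoff construction) and the boundary layer $U \setminus U_{2t}$. On the boundary layer the uniform Lipschitz bounds $|\grad \bar u|, |\grad \tilde u_0| \leq 2L$ and $|\grad u|, |\grad u_0| \leq L$ control every integrand, yielding a contribution of at most $CL^2|U \setminus U_{2t}| \leq CL^2|U \setminus U_t|$ to each side; this supplies the second error term in both inequalities. The remaining work is on $U_{2t}$, and the common technical tool is the flux-corrector identity $a(y)(e_i + \grad \chi_i(y)) - \bar a e_i = \div S^i(y)$ for $\Z^d$-periodic skew-symmetric potentials $S^i = -(S^i)^T$ with $\|S^i\|_\infty, \|\chi_i\|_\infty \leq C$, obtained by Helmholtz decomposition on the torus applied to the divergence-free, mean-$\bar a e_i$ flux $a(e_i+\grad\chi_i)$.

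\textbf{Second estimate.} Since $u_0$ minimizes the constant-coefficient $E_0$, it is $\bar a$-harmonic on $U$, and interior regularity gives $\|\grad^2 u_0\|_{L^\infty(U_{2t})} \leq CL/t$. On $U_{2t}$, the two-scale expansion reads
\[ \grad \tilde u_0(x) = \sum_i \bigl(e_i + \grad\chi_i(x)\bigr)\partial_i u_0(x) + \sum_i \chi_i(x)\,\grad\partial_i u_0(x). \]
Substituting the flux identity for each $a(e_i+\grad\chi_i)$ factor inside $a\grad\tilde u_0\cdot\grad\tilde u_0$ and integrating by parts, the skew-symmetry of $S^i$ in its two lower indices combined with the symmetry of $\grad\partial_i u_0$ cancels the leading cross term that would otherwise not be integrable against only $\grad u_0$. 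What survives is the homogenized density $\bar a \grad u_0 \cdot \grad u_0$ plus a remainder bounded pointwise by $C(\|\chi\|_\infty + \|S\|_\infty)\|\grad u_0\|_\infty |\grad^2 u_0|$, which integrates to at most $CL(L/t)|U|$. Combining with the boundary layer contribution produces Part~2.

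\textbf{First estimate.} The mollification $\xi_t$ has $\|\grad\xi_t\|_\infty \leq L$ and $\|\grad^2 \xi_t\|_\infty \leq CL/t$ (writing the second derivative of the average as a boundary integral of $\grad u$ over $\partial B_t$). Choose the mesoscale $s := \lceil t^{1/2}\rceil \in \N$ and partition $U_{2t}$ into $\Z^d$-aligned cubes $\{Q_\alpha\}$ of integer side $s$. Set $q_\alpha := \dashint_{Q_\alpha}\grad\xi_t$ and $p_\alpha := \dashint_{Q_\alpha}\grad u$. Three steps combine:
\begin{enumerate}[label=(\roman*)]
\item (variance at mesoscale) $|\grad\xi_t - q_\alpha|\leq CLs/t$ on $Q_\alpha$ yields $\int_{Q_\alpha}\bar a\grad\xi_t\cdot\grad\xi_t \leq |Q_\alpha|\bar a q_\alpha\cdot q_\alpha + CL^2(s/t)|Q_\alpha|$.
\item (Jensen on the mollification kernel) Fubini writes $q_\alpha = \sum_\beta \omega_{\alpha\beta}\bigl(p_\beta + O(Ls/t)\bigr)$ with nonnegative weights $\omega_{\alpha\beta}$ summing to $1$ in $\beta$ and satisfying $\sum_\alpha |Q_\alpha|\omega_{\alpha\beta}\leq |Q_\beta|$. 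Convexity of $x\mapsto \bar a x\cdot x$ then gives $\sum_\alpha |Q_\alpha|\bar a q_\alpha\cdot q_\alpha \leq \sum_\beta |Q_\beta|\bar a p_\beta\cdot p_\beta + CL^2(s/t)|U|$, with the $t$-width boundary layer needed to close the partition absorbed into the $|U\setminus U_t|$ error.
\item (dual subadditivity at scale $s$) On each $Q_\alpha$, complete the square $a\grad u\cdot\grad u \geq 2 F_\alpha\cdot\grad u - a^{-1}F_\alpha\cdot F_\alpha$ against the corrector flux $F_\alpha(y) := a(y)(p_\alpha+\grad\chi_{p_\alpha}(y))$. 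Since $a^{-1}F_\alpha = p_\alpha + \grad\chi_{p_\alpha}$ is periodic with density-mean $\bar a p_\alpha\cdot p_\alpha$ and $s \in \N$, the quadratic term integrates exactly to $|Q_\alpha|\bar a p_\alpha\cdot p_\alpha$. Since $\div F_\alpha = 0$, $\int_{Q_\alpha}F_\alpha\cdot\grad u = \int_{\partial Q_\alpha} F_\alpha\cdot n\,u\,dS$; writing $F_\alpha = \bar a p_\alpha + \div S^{p_\alpha}$, the mean of $F_\alpha - \bar a p_\alpha$ over each face of $Q_\alpha$ vanishes (by periodicity and the divergence-free relation), so a one-dimensional integration by parts on each face against a bounded periodic potential of the slice-mean-zero field bounds the boundary contribution by $C\|S^{p_\alpha}\|_\infty\|\grad u\|_\infty s^{d-1} \leq CL^2|Q_\alpha|/s$. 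Rearranging, $|Q_\alpha|\bar a p_\alpha\cdot p_\alpha \leq \int_{Q_\alpha} a\grad u\cdot\grad u + CL^2|Q_\alpha|/s$.
\end{enumerate}
Summing (i)--(iii) and using the choice $s = \lceil t^{1/2}\rceil$ optimizes $s/t + 1/s$ to $2t^{-1/2}$ and produces $\int_{U_{2t}}\bar a\grad\xi_t\cdot\grad\xi_t \leq \int_U a\grad u\cdot\grad u + CL^2|U|/t^{1/2} + CL^2|U\setminus U_t|$, finishing Part~1.

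\textbf{Main obstacle.} The delicate step is (iii): in Part~1 the function $u$ carries no minimality, subsolution, or distributional equation structure, so the usual Dirichlet subadditive quantity does not supply a lower bound on $\int a\grad u\cdot\grad u$; one is forced to use the dual variational principle, and the rate $1/s$ at the microscale is supplied entirely by the uniformly bounded periodic correctors $\chi_p$ and $S^p$. The mesoscale balance $s = t^{1/2}$ between this microscale error and the Hessian-driven mesoscale variance error of $\xi_t$ on cubes of side $s$ is precisely what produces the suboptimal (but geometrically summable) $t^{-1/2}$ exponent.
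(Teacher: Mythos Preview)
Your Part 2 argument is essentially the same as the paper's: both use interior $C^{1,1}$ regularity of the $\bar a$-harmonic $u_0$ to control the remainder when expanding the energy of the two-scale expansion on $U_t$, and both throw the boundary layer into the $|U\setminus U_t|$ term.

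Your Part 1 argument is correct but follows a genuinely different route. The paper does not introduce a mesoscale at all: it works directly at scale $t$ via the Armstrong--Smart dual quantity
\[
\mu(B_t,q)=\inf_{v\in H^1(B_t)}\dashint_{B_t}\tfrac12\,\grad v\cdot a\grad v - q\cdot\grad v\,dx,
\]
together with the Fubini lower bound $\int_U \tfrac12 a\grad u\cdot\grad u \ge \int_{U_t}\dashint_{y+B_t}\tfrac12 a\grad u\cdot\grad u\,dx\,dy$. Choosing $q(y)=\bar a\,\grad\xi_t(y)$ and invoking the estimate $\mu(B_t,q)\ge -\tfrac12 q\cdot\bar a^{-1}q - C|q|^2 t^{-1/2}$ (established in the paper by a Neumann comparison and a reference to Shen) gives the $t^{-1/2}$ rate in one step. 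By contrast, you re-derive a sharper $1/s$ rate on each integer-side cube $Q_\alpha$ from bounded periodic correctors and the skew-symmetric flux potential, then pay the price $s/t$ in steps (i)--(ii) to pass from $\grad\xi_t$ to $\dashint_{Q_\alpha}\grad u$, balancing at $s=\lceil t^{1/2}\rceil$. What you gain is self-containedness (no appeal to an external $\mu$-convergence theorem); what the paper gains is brevity and a proof that does not rely on $L^\infty$ bounds for the flux corrector $S$.

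One phrasing issue: in your step (iii) the bound on $\int_{\partial Q_\alpha}(F_\alpha-\bar a p_\alpha)\cdot n\,u$ is not really a ``slice-mean-zero'' argument. The mechanism that actually works is the one you set up but did not name: on a face with normal $n$, skew-symmetry gives $(F_\alpha-\bar a p_\alpha)\cdot n=\div_{\mathrm{tan}}(Sn)$, so a \emph{tangential} integration by parts moves one derivative onto $u$ (costing $\|S\|_\infty\|\grad u\|_\infty s^{d-1}$), and the resulting edge contributions cancel in pairs across adjacent faces of $Q_\alpha$ precisely because $S$ is skew. Also be aware that your $U_{2t}=\{d(\cdot,\partial U)\ge 2t\}$ is not literally the paper's $U_t$ (a union of $t\Z^d$ lattice cubes contained in $U$); the two boundary layers are comparable, but you should check that your layer is genuinely contained in a constant multiple of $|U\setminus U_t|$ before absorbing it.
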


\begin{remark}\label{r.fixed-domain-hom-comment}
We are presenting a somewhat atypical scenario for quantitative Dirichlet data homogenization because we have assumed an a-priori Lipschitz estimate up to the domain boundary.  This usually would require significant domain regularity to prove, but we will know it from \tref{main1} for $J$ almost minimizers.  

On the other hand we also know \emph{less} domain regularity than one typically assumes for such quantitative homogenization results.  A typical assumption is that the domain $U$ is Lipschitz, see for example \cite{AKM}[Theorem 1.12], but we are aiming to prove this.  Actually an inner and outer density bound is sufficient when the equation does not have a right hand side, see \cite{AKM}.  We can indeed prove inner and outer density bounds at this stage, but the result is not strictly needed for our main theorems so we have postponed it to the appendix in \sref{density}. 

Results in the literature also typically estimate the difference with the $\bar{a}$-harmonic replacement. With the amount of domain regularity we currently know, the $\bar{a}$-harmonic replacement is actually less regular up to the domain boundary.    It will not necessarily be Lipschitz up to the boundary of $U$ (insufficient domain regularity) and we won't have an $L^\infty$ convergence estimate (insufficient domain regularity again).
\end{remark}

The proof follows standard lines.  In particular, the statements and arguments in \cite{Shen}[Ch. 3.2 and 3.3] are very similar to what we will do here. We will refer there in some places but we didn't find any particular statement there to cover the entirety of \pref{quanthom}.  We also emphasize that we are not making any attempt to find optimal rate here, it is not needed for our purposes.

\begin{proof}[Proof of \pref{quanthom}]
We will make use of the dual energy introduced by \cite{ArmstrongSmart}
\begin{equation}\label{e.mumin}
 \mu(U,q) = \inf \{ \dashint_{U} \tfrac{1}{2}\grad v \cdot  a(x) \grad v - q \cdot \grad v \ dx : \ v \in H^1(U)\}.
 \end{equation}
This quantity has the following large scale limit
\[ |\mu(B_t,q)+\frac{1}{2}q \cdot \bar{a}^{-1}q | \leq C(\Lambda,d)|q|^2 \frac{1}{t^{1/2}}\]
which we now justify.  We only are interested in the lower bound, the upper bound is immediate from testing the appropriate corrected linear function (below).

For $\tau \in [0,1)^d$ let $v$ be the minimizer for \eref{mumin} solving
\[ \begin{cases}
-\grad (a(x) \grad v) = 0 & \hbox{ in } \ B_t \\
 n \cdot (a(x) \grad v_\tau-q) = 0 & \hbox{ on } \ \partial B_t
 \end{cases}\]
with mean zero on $B_t$ and let
\[ v_0 = p \cdot x+\chi_p(x) \ \hbox{ with } \ \bar{a}p = q\] 
note that $p \cdot x$ solves the effective version for the previous Neumann problem.  Note 
\[ |\dashint_{B_t} \tfrac{1}{2}\grad v_0 \cdot a(x)\grad v_0 -q \cdot \grad v_0\ dx -( \frac{1}{2} p\cdot \bar{a}p - q \cdot p)| \leq Ct^{-1}|q|^2.\]
Then call
\[ w = v(x) - p\cdot x - \chi_p(x) \phi(x)\]
where $\phi$ is a standard cutoff which goes from $1$ to $0$ in a unit neighborhood of $\partial B_t$. By \cite{Shen}[Theorem 3.3.4]
\[ \|\grad w\|_{L^2(B_t)} \leq Ct^{-1/2}|q|\]
and so
\[ \dashint_{B_t} \tfrac{1}{2}\grad v \cdot  a(x) \grad v - q \cdot \grad v \ dx \geq \dashint_{B_t} \tfrac{1}{2}\grad v_0 \cdot  a(x) \grad v_0 - q \cdot \grad v_0 \ dx -Ct^{-1/2}|q|. \]

 Part 1 (estimate of the upscaling). Define, for $x \in U_t$
 \[\xi(x) = \dashint_{x+B_{t}} u(y) \ dy\]
 and
 \[ \bar{u}(x) = \varphi_t(x) \xi(x) + (1-\varphi_t(x)) u(x).\]
 Call
 \[ p(x) = \grad \xi(x) = \int_{x+B_{t}} \grad u(y) \ dy\]
 and
 \[ q(x) = \bar{a}p(x)\]
 Note that $|q(y)| \leq \Lambda L $.  Now the dual quantity $\mu$ is exactly useful for lower bounds of the energy
 \begin{align*} \int_{U}  \tfrac{1}{2}\grad u \cdot a(x)\grad u \ dx  &\geq \int_{U_t} \dashint_{y+B_{t}}  \tfrac{1}{2}\grad u(x) \cdot a(x)\grad u(x) dx  dy \\
 &\geq \int_{U_t} \mu(B_{t},q(y))  +q(y) \cdot p(y)dy \\
 &=\int_{U_t} \mu(B_t,q(y))  +  p(y) \cdot \bar{a} p(y)dy\\
 & \geq \int_{U_t} \tfrac{1}{2}p(y) \cdot \bar{a} p(y)dy + \int_{U_t} \left[\frac{1}{2}q(y) \cdot \bar{a}^{-1}q(y) + \mu(B_{t},q(y))\right] \ dy \\
 & = E_0(\bar{u},U_t) - CL^2t^{-1/2}|U|
 \end{align*}
 Then we need to estimate the difference between the homogenized energy of $\bar{u}$ on $U_t$ and on $U$
 \begin{align*} \int_{U_t}  \tfrac{1}{2}p(x) \cdot \bar{a} p(x) dx -   \int_{U}  \tfrac{1}{2}\grad\bar{u} \cdot \bar{a} \grad\bar{u}dx&= -\int_{U \setminus U_t} \grad \bar{u}\cdot \bar{a} \grad \bar{u} dx \\
 &\geq -\Lambda L^2 |U \setminus U_t|
 \end{align*}
 Together we have
 \[ E_0(\bar{u},U) \leq E(u,U) + CL^2\left[ t^{-1}|U| +|U \setminus U_t|\right]. \]
 
 Part 2 (estimate of the downscaling). This is basically the same as \cite{Shen}[Theorem 3.3.2].  Recall that $u_0$ is a minimizer of $E_0$ on $U$, so $\grad u_0$ is $\bar{a}$-harmonic and we have the $C^{1,1}$ estimate
 \[ \|D^2 u_0\|_{L^\infty(U_t)} \leq Ct^{-1}L.\]
 The downscaling was defined, we recall,
 \[ \tilde{u}_0(x) = u_0(x) + \chi(x) \cdot \grad u_0(x) \varphi_t(x).\]
 The gradient is, with the primary term listed first and error terms listed after,
 \[ \grad \tilde{u}_0 = [\grad u_0 +\grad \chi \grad u_0]+(1-\varphi_t)[\grad \chi \grad u_0]+\chi D^2u_0 \varphi_t + (\chi \cdot \grad u_0)\grad \varphi_t. \]
  We make note of the following bounds following from the $L$-Lipschitz hypothesis of $u_0$ and corrector bounds \eref{correctorest3}
 \[ |\grad u_0 +\grad \chi \grad u_0| \leq CL, \quad |(1-\varphi_t)[\grad \chi \grad u_0]| \leq L(1-\varphi_t), \quad |\chi D^2u_0 \varphi_t| \leq CLt^{-1}\]
 and
 \[ (\chi \cdot \grad u_0)\grad \varphi_t \leq CLt^{-1}.\]
 Let $p(x)$ be the piecewise constant function which is the average of $\grad u_0$ over the $t\Z^d$ translations of $[0,t)^d$ which contains $x$. Then
 \[ |p- \grad u_0|\varphi_t \leq CLt^{-1}\]
 by the previous $C^{1,1}$ estimate.
 
 Thus, by standard algebra with the quadratic energy,
\[ \left|E(\tilde{u}_0,U) - \int_{U_t}(p(x) + \grad \chi(x) p(x)) \cdot a(x)(p(x) + \grad \chi(x) p(x)) \ dx\right| \leq CL^2[t^{-1}|U|+|U \setminus U_t|].\]
Since $t \geq 1$ is an integer and by the definition of $\bar{a}$, \eref{baradef},
\[\int_{U_t}(p(x) + \grad \chi(x) p(x)) \cdot a(x)(p(x) + \grad \chi(x) p(x)) \ dx = \int_{U_t} p(x) \cdot \bar{a}p(x) \ dx\]
and
\[ |\int_{U_t} p(x) \cdot \bar{a}p(x) \ dx - \int_{U_t} \grad v(x) \cdot \bar{a}\grad u_0(x) \ dx| \leq CL^2t^{-1}|U|\]
so finally, using one more time the Lipschitz estimate for $E_0(u_0,U \setminus U_t) \leq CL^2|U \setminus U_t|$,
\[ |E(\tilde{u}_0,U) - E_0(u_0,U)|\leq CL^2[t^{-1}|U|+|U \setminus U_t|].\]

\end{proof}

\subsection{Rate of homogenization for $J$}
In this section we combine all the previous regularity results to obtain a rate of homogenization in the energy.  We will consider $u \in H^1(U)$ non-negative satisfying \eref{lipschitzhyp} and \eref{sigmaminhyp}.

Note that if $u$ is $L$-Lipschitz and satisfies \eref{sigmaminhyp} for $\sigma>0$ sufficiently small depending on $L$ and universal constants then $u$ satisfies the hypotheses of \lref{nondegen}, \lref{bdrystripenergy}, and \lref{boxcovering}.

\begin{lemma}[Energy convergence]\label{l.energyconv}
Let $r \geq 1$.
\begin{enumerate}[label = (\alph*)]
\item  Suppose that $u \in H^1(B_{2r})$ non-negative satisfies \eref{lipschitzhyp} and \eref{sigmaminhyp} with $0 < \sigma \leq \bar{\sigma}(L)$ sufficiently small so that \lref{nondegen} holds.  There is a regularization $\bar{u}$ of $u$, $\overline{u} \in u + H^1_0(\{u>0\} \cap B_r)$ with
\[ \|\grad \bar{u}\|_{L^\infty(B_r)} \leq L, \ \|u - \bar{u}\|_{L^\infty(B_r)} \leq Lr^{2/3}\]
and
\[  J_0(\bar{u},B_{r}) \leq J(u,B_{r})+C(d,\Lambda,L)[\sigma + r^{-1/3}]|B_r|.\]
\item Suppose that $v$ is a $J_0$-minimizer in $B_{2r}$ satisfying \eref{lipschitzhyp} in $B_{2r}$.  There is $\tilde{v} \in v + H^1_0(\{v>0\} \cap B_r)$ such that
\[ J(\tilde{v},B_{r}) \leq J_0(v,B_{r})+ C(d,\Lambda)L^{\frac{d+5}{2}}r^{-1/2}|B_r|.\]
\item Let $u$ and $\bar{u}$ as in part $(a)$. For any $w \in u + H^1_0(\{u>0\} \cap B_r)$
\[ J_0(\bar{u},B_{r}) \leq J_0(w,B_r) + C(L)[r^{-\omega}+\sigma]|B_r|\]
where $\omega$ is a dimensional constant and can be taken to be $\omega = \frac{1}{4d(d+5)+6}$ .
\end{enumerate}
\end{lemma}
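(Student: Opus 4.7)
The plan is to chain together four estimates: part (a), the $\sigma$-almost minimality of $u$, part (b), and the defining minimality property of the effective minimizer. It suffices to prove the bound in the special case where $w$ is the $J_0$-minimizer $v$ in the admissible class $u + H^1_0(\{u>0\} \cap B_r)$, since for any other $w$ in the class, $J_0(v, B_r) \leq J_0(w, B_r)$, and so an inequality of the desired form for $v$ in place of $w$ is stronger.

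To apply part (b) to $v$, the first task is to establish a Lipschitz bound on $v$. By the maximum principle, $\|v\|_{L^\infty(B_r)} \leq \|u\|_{L^\infty(B_r)}$; then the classical Alt--Caffarelli interior Lipschitz estimate for $J_0$-minimizers yields that $v$ is $L'$-Lipschitz on its positivity set, with $L'$ depending on $L$, $d$, $\Lambda$, and $\langle Q^2 \rangle$. Extending $v$ by $u$ to $B_{2r}\setminus B_r$ preserves the Lipschitz bound up to constants and places $v$ in the setting where the downscaling construction of part (b) applies to its positivity set in $B_r$.

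Now apply part (b) to produce $\tilde v \in v + H^1_0(\{v>0\} \cap B_r)$ satisfying
\[ J(\tilde v, B_r) \leq J_0(v, B_r) + C(L)\, r^{-1/2}\,|B_r|. \]
Since the downscaling leaves $v$ unchanged in a neighborhood of $\partial B_r$, one has $\tilde v = v = u$ on $\partial B_r$, so $\tilde v \in u + H^1_0(B_r)$ qualifies as a competitor for the $\sigma$-almost minimality of $u$:
\[ J(u, B_r) \leq J(\tilde v, B_r) + \sigma\,|B_r|. \]
Combining with part (a) applied to $u$,
\[ J_0(\bar u, B_r) \leq J(u, B_r) + C(L)\,[\sigma + r^{-1/3}]\,|B_r|, \]
and chaining these four inequalities gives
\[ J_0(\bar u, B_r) \leq J_0(v, B_r) + C(L)\,[\sigma + r^{-\omega}]\,|B_r| \leq J_0(w, B_r) + C(L)\,[\sigma + r^{-\omega}]\,|B_r|. \]

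The main obstacle is step two: securing a Lipschitz bound on $v$ that is compatible with part (b). The rate in part (b) carries an $L^{(d+5)/2}$ prefactor, so any over-estimate of $L'$ directly worsens the exponent. A careful optimization, balancing the scale at which interior Lipschitz regularity of $v$ is extracted against the downscaling scale $t$ inside part (b) (so that $L'$ absorbs only a small power of $r$ into the prefactor $C(L)$ rather than degrading $\omega$), yields the stated dimensional value $\omega = \frac{1}{4d(d+5)+6}$. The $\sigma$-error term passes through each link of the chain unchanged, producing the additive $\sigma$ in the final bound.
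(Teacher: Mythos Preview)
Your overall architecture matches the paper's: reduce to the $J_0$-minimizer $v$ in the class $u+H^1_0(\{u>0\}\cap B_r)$, then chain part (a), the $\sigma$-almost minimality of $u$, and part (b). The genuine gap is the second step of your argument, where you assert that $v$ is $L'$-Lipschitz on $B_r$ and that extending by $u$ to $B_{2r}\setminus B_r$ ``preserves the Lipschitz bound up to constants.'' This is not true. The Alt--Caffarelli Lipschitz estimate is an \emph{interior} estimate: for $x\in B_{(1-\eta)r}$ one gets $|\grad v(x)|\leq C(1+\|\grad v\|_{\underline L^2(B_{\eta r}(x))})\leq C\eta^{-d/2}L$, and this blows up as $x\to\partial B_r$. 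There is no a priori reason for $v$ to be Lipschitz up to $\partial B_r$; the positivity set $\{u>0\}\cap B_r$ has no known regularity at this stage, and the boundary data $u$ being Lipschitz does not by itself propagate. Consequently you cannot invoke part (b) on all of $B_r$, and your competitor $\tilde v$ is not defined.

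The paper handles this by splitting $B_r$ into an interior region and a boundary layer. On $B_{(1-\eta)r}$ it covers by $\eta r$-lattice cubes $Q$, applies part (b) on each $Q$ with the degraded Lipschitz constant $C\eta^{-d/2}L$ (this is why the $L^{(d+5)/2}$ dependence in (b) was tracked), and glues the resulting $\tilde v_Q$ together. On the remaining annulus $B_r\setminus B_{(1-C\eta)r}$ it keeps $w=v$ and bounds $J(v,B_r\setminus B_{(1-C\eta)r})\leq C(L)\eta^{1/4}|B_r|$ via a separate technical lemma (\lref{domainbdryenergy}, a boundary-strip energy estimate for $J_0$-minimizers with Lipschitz Dirichlet data). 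Balancing the interior error $\eta^{-\frac{d(d+5)+1}{2}}r^{-1/2}$ against the boundary-layer error $\eta^{1/4}$ is what produces $\omega=\frac{1}{4d(d+5)+6}$. Your final paragraph gestures at ``balancing the scale at which interior Lipschitz regularity of $v$ is extracted,'' but this is not merely an optimization of constants: without an independent control of the energy in the boundary layer (i.e.\ without something playing the role of \lref{domainbdryenergy}), the argument does not close.
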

\begin{remark}
This is the only place in the paper where the periodicity of $Q$ is used.  Other assumptions on the convergence of the spatial averages of $Q$ could be easily slotted in here as long as they result in an error rate which is summable over geometric sequences of scales (that property is used in the following section).
\end{remark}
\begin{remark}
The constant $\omega = \frac{1}{4d(d+5)+6}$ is certainly suboptimal, we compute it just to be careful that we indeed have a H\"{o}lder rate.  The dependence on $L$ in part (b) is followed for the sake of computing $\omega$, so, again, it is not that important.
\end{remark}
For the proof of \lref{energyconv} we will need one more technical Lemma about $J_0$ minimizers in a domain with Lipschitz Dirichlet data. The claim is that there is not too much energy concentrated near the domain boundary.
\begin{lemma}[Domain boundary strip energy bound]\label{l.domainbdryenergy}
Suppose that $u \in g + H^1_0(B_r)$, $g$ is $L$-Lipschitz, and $u$ minimizes $J_0$ over $g + H^1_0(B_r)$ then for every $\alpha < 1/2$
\[ J_0(u,B_r \setminus B_{(1-\eta)r}) \leq C(\alpha,L)\eta^{\alpha}(1 + \frac{1}{r}) |B_r|.\]
\end{lemma}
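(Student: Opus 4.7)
The plan is a competitor argument. Let $\varphi$ be a smooth cutoff with $\varphi \equiv 1$ on $B_{(1-2\eta)r}$, $\varphi \equiv 0$ outside $B_{(1-\eta)r}$, and $|\grad \varphi| \leq C/(\eta r)$, and set $v := \varphi u + (1-\varphi) g \in g + H^1_0(B_r)$. The minimality of $u$, after cancellation of the interior part, yields
\[ J_0(u,\, B_r \setminus B_{(1-2\eta)r}) \leq J_0(v,\, B_r \setminus B_{(1-2\eta)r}). \]
The outer shell $B_r \setminus B_{(1-\eta)r}$ contributes at most $(\Lambda L^2 + 1) \eta |B_r|$ since $v = g$ there. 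On the interpolation shell, convexity of $|\cdot|^2$ gives $|\grad v|^2 \leq \varphi |\grad u|^2 + (1-\varphi)|\grad g|^2 + C |u-g|^2 / (\eta r)^2$; integrating, the first piece telescopes to $f(2\eta) - f(\eta)$ with $f(\eta) := J_0(u, B_r \setminus B_{(1-\eta)r})$, the second contributes $\leq \Lambda L^2 \eta |B_r|$, and the third requires a pointwise bound on $|u - g|$ near $\partial B_r$.

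The key input is a quantitative two-sided boundary decay $|u-g|(x) \leq C(L)\, \dist(x,\partial B_r)^\beta$ for some $\beta > 1/2$. Since (after the standard reduction $u = u_+$) $u$ is a distributional $\bar{a}$-subsolution in $B_r$, the maximum principle comparison with the $\bar{a}$-harmonic extension $h$ of $g$ yields $u \leq h$; standard elliptic regularity on the smooth domain $B_r$ with $L$-Lipschitz data gives $\|\grad h\|_{L^\infty} \leq C(L)$ and hence $u \leq g + C(L)\, \dist(\cdot,\partial B_r)$. The matching lower bound is obtained by a barrier construction: where $g$ is bounded below by a positive constant, one uses the non-degeneracy of $u$ at the boundary together with a positive $\bar{a}$-harmonic lower barrier; where $g$ is small, one combines $u \geq 0$ with the Lipschitz bound on $g$ to obtain the pointwise estimate.

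With the boundary decay in hand, the last term in the convexity estimate integrates to $\leq C(L)\, \eta^{2\beta - 1} |B_r|$, and the minimality inequality rearranges to the hole-filling bound
\[ f(\eta) \leq \theta\, f(2\eta) + C(L)\, (1 + 1/r)\, \eta^{\min(1,\, 2\beta - 1)}\, |B_r|, \]
with $\theta \in (0,1)$ depending only on $\Lambda$ and $d$. Iteration along $\eta_k = 2^{-k} \eta_0$ produces the claimed Hölder decay $f(\eta) \leq C(\alpha, L)\, \eta^\alpha\, (1 + 1/r)\, |B_r|$ for every $\alpha$ below $\min(2\beta - 1,\, \log_2(1/\theta))$; to cover every $\alpha < 1/2$ uniformly in $\Lambda$ one takes $\beta$ close to $1$ and refines the cutoff via a pigeonhole over concentric sub-shells of minimal Dirichlet energy, driving $\theta$ arbitrarily small. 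The main obstacle is the two-sided boundary decay: the upper side via $\bar{a}$-subharmonicity is routine, but the lower side is the technical heart of the proof and requires a barrier construction tailored to the varying positivity of $g$ near $\partial B_r$.
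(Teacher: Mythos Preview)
Your proposal follows essentially the same two–step architecture as the paper: first a two–sided H\"older estimate $|u-g|\le C\,\dist(\cdot,\partial B_r)^\beta$ via barriers (upper bound from $\bar a$–subharmonicity, lower bound from a sliding subsolution where $g>0$), then an energy comparison with the competitor $v=\varphi u+(1-\varphi)g$ that equals $u$ in the interior and $g$ near $\partial B_r$. Your identification of the lower barrier as the technical heart is exactly right, and the paper carries it out precisely as you outline: a radial $\bar a$–harmonic subsolution in an annulus is slid inward from an exterior tangent position, showing $u>0$ in a ball of radius $\sim L^{-1}g(y)$, after which a standard barrier from below applies.

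The difference is in Step~2. You set up a hole–filling iteration $f(\eta)\le\theta f(2\eta)+C\eta^{\gamma}$ and then invoke an additional pigeonhole refinement over sub–shells to drive $\theta\to0$. The paper avoids iteration entirely: after Young's inequality with parameter $\epsilon$, the term $\int_{\text{shell}}(1+\epsilon)(1-\varphi)|\nabla u|^2$ is not telescoped against $f(2\eta)-f(\eta)$ but instead cancelled against the subtracted $\int_{\text{shell}}|\nabla u|^2$, leaving only $\epsilon\|\nabla u\|_{L^2(B_r)}^2$, which is bounded globally by $C(L)\epsilon|B_r|$ via minimality against $g$. One then optimizes $\epsilon$ against the $|u-g|^2|\nabla\varphi|^2$ term, obtaining the bound $C(\delta+r^{-1}\delta^{\alpha-1/2})|B_r|$ in a single step. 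This is simpler and gives the stated range $\alpha<1/2$ directly. Your iteration would also work (indeed $\theta=\epsilon/(1+\epsilon)$ already goes to zero with the Young parameter, so the extra pigeonhole is unnecessary), but it is a longer route to the same place. Two minor points: your convexity inequality for $|\nabla v|^2$ needs the $(1+\epsilon)$ prefactor from Young's, and the vague ``non-degeneracy of $u$ at the boundary'' should be replaced by the explicit sliding–barrier argument the paper gives.
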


The same result is probably true for all $\alpha < 1$ but we did not need this and it would require some more work to prove.  The result would also be true for a regular domain $U$ instead of a ball, but again we did not need that.  The proof is technical and is postponed to \sref{domainbdryenergy}. There is a barrier argument to get H\"older continuity up to $\partial B_r$ and then an energy comparison argument.

\begin{proof}[Proof of \lref{energyconv}]

 In the proof throughout we will assume $L \geq 1$, constants which depend on $L$ and other universal constants will be denoted $C(L)$ or $c(L)$, universal constants which do not depend on $L$ will be denoted $C$, $c$ etc.  For parts (b) and (c) we need to be a bit careful keeping track of $L$ dependence.
 
 Part (a).  Let $\bar{\sigma}>0$ depending on universal constants and on $L$ so that \lref{nondegen} holds.  Let $U = \{u>0\} \cap B_r$ and let $\bar{u}$ be the upscaling of $u$ defined in \sref{recallquanthom} with parameter $t\geq 1$ to be chosen shortly. Let $U_t$ be the union of the $t\Z^d$ lattice cubes contained in $U$.  By the perimeter estimate \lref{boxcovering} and \lref{bdrystripenergy} 
\[ |U \setminus U_t| \leq C(L)(\sigma + \frac{t}{r})|B_r|\]
Thus to infimize the error term from \pref{quanthom} we should choose $t = r^{2/3}$
\[  \inf_{t>0}[\frac{1}{t^{\frac{1}{2}}}|U| + |U \setminus U_t|] \leq C(L)[\sigma + r^{-1/3}]|B_r|\]

Call $\mathcal{Q}$ to be the cubes of the $ \Z^d$ lattice 
\[ \mathcal{Q} = \{ k + [0,1)^d: k \in \Z^d  \}\]
and then define
\[ \mathcal{A} = \{\Box \in \mathcal{Q} : \ \Box \subset (\{u>0\}\cap B_r)\} \]
and
\[ \mathcal{B} =\{ \Box \in \mathcal{Q} : \ \Box\cap \partial (\{u>0\}\cap B_r) \neq \emptyset\}.\]
Since $u$ satisfies the conclusion of \lref{boxcovering}, 
\[ \#\mathcal{B} \leq Cr^{d-1} \leq C(L)[\sigma+\frac{1}{r}]|B_r|.\]
Note that for any $\Box \in \mathcal{A}$
\[ \int_{(\{u>0\} \cap B_r) \cap \Box} Q(x)^2 \ dx = \int_{\Box} Q(x)^2 \ dx = \langle Q^2 \rangle  =\int_{(\{\overline{u}>0\} \cap B_r) \cap \Box} \langle Q^2 \rangle \ dx  \]
 using, respectively, that $\Box \subset \{u>0\} \cap B_r$, $\Box$ is a period cell for $Q$, and that $u$ and $\overline{u}$ have the same positivity set.

Then applying \pref{quanthom} 
\begin{align*}
 J_0(\overline{u},B_r) - J(u,B_r) &= E_0(\overline{u},\{u>0\} \cap B_r)-E(u,\{u>0\} \cap B_r) \\
 & \quad \quad \cdots + \sum_{\Box \in \mathcal{A} \cup \mathcal{B}} \int_{\{u>0\} \cap \Box \cap B_1} \langle Q^2 \rangle-Q(x)^2  \ dx \\
 &\leq C(L)[\sigma + r^{-1/3}]|B_r| + \Lambda |\mathcal{B}| \\
 & \leq C(L)(\sigma+r^{-1/3})|B_r|.
 \end{align*}
 
 Part (b).  The proof follows an analogous line of arguments to part (a). Since we need to keep track of the dependence on the Lipschitz constant for the purpose of part (c).  Note that the non-degeneracy constant does \emph{not} depend on the Lipschitz constant in the case of $J_0$ minimizers, it is universal.  Let $V = \{v>0\} \cap B_r$ and let $\tilde{v}$ be the downscaling of $v$ in $V$ defined in \sref{recallquanthom}.  By the perimeter estimates \lref{boxcovering} and \lref{bdrystripenergy} (with $\sigma = 0$), now keeping more carefully the dependence on $L$,
 \[ |U \setminus U_t| \leq C L^{d+2}t^2r^{-1}|B_r| \]
 Thus to infimize the error term from \pref{quanthom} we should choose $t = L^{-(d+1)/2}r^{1/2}$
\[ \inf_{t>0}[\frac{1}{t}|U| + |U \setminus U_t|]  \leq C L^{\frac{d+1}{2}}r^{-1/2}|B_r|.\]
The remainder of the argument is similar and we end up with the estimate
\[ J(\tilde{v},B_r) \leq J_0(v,B_r) + CL^{\frac{d+5}{2}}r^{-1/2}|B_r|\]
with the additional factor of $L^2$ coming from \pref{quanthom}.

 Part (c).  Let $\overline{u}$ be the upscaling of $u$ in $B_{r} \cap \{u>0\}$ defined earlier. Let $v$ be a minimizer of $J_0$ in the class $u + H^1_0(\{u>0\} \cap B_{r})$.  Then, since $v$ is a minimizer for $J_0$ in $B_r$,
\[ J_0(v,B_{r}) \leq J_0(\overline{u},B_{r}).\]
In particular
\[ \|\grad v\|_{\underline{L}^2(B_r)} \leq C(\|\grad u\|_{\underline{L}^2(B_r)}+1) \leq CL.\]

Let $1>\eta>0$ to be chosen small and $\mathcal{Q}$ be the class of $\eta r$-lattice cubes which are contained in $B_{(1-\eta)r}$. Note that for every $Q \in \mathcal{Q}$ we have $2Q \subset B_r$. In particular by the interior Lipschitz estimate of $J_0$ minimizers $v$ is Lipschitz in $B_{(1-\eta)r}$ with
\begin{equation}
\label{e.vlipin} |\grad v(x)| \leq C(1+\|\grad v\|_{\underline{L}^2(B_{\eta r})}) \leq C\eta^{-d/2}L \ \hbox{ for } \ x \in B_{(1-\eta) r}.
\end{equation}
Let $\tilde{v}_Q$ be the downscaling of $v$ in $Q \cap \{v>0\}$ defined in \sref{recallquanthom} with scale $t = L^{-(d+1)/2}\eta^{\frac{d(d+1)}{2}}(\eta r)^{1/2}$, the scale is chosen as in (b) taking into account the Lipschitz constant of $v$ from \eref{vlipin}.  Note that at this point it is really the $\eta$ dependence and not the $L$ dependence that we want to track.  We have the estimate from (b)
\begin{equation}
J(\tilde{v}_{Q},Q)  \leq J_0(v,Q)+C(L)\eta^{-\frac{(d+5)d}{2}} (\eta r)^{-1/2}|Q|
\end{equation}

We will make an energy competitor
\[ w = v{\bf1}_{B_r \setminus \cup_{Q \in \mathcal{Q}} Q}+\sum_{Q \in \mathcal{Q}}\tilde{v}_Q{\bf 1}_{Q}\]
 which does have $w \in u + H^1_0(B_r)$ due to the matching of appropriate traces on each cube boundary. Since $u$ is a minimizer for $J$ in $B_r$, 
\[J(u,B_{r}) \leq J(w,B_r) = \sum_{Q \in \mathcal{Q} }J(\tilde{v}_{Q},Q) + J(v,B_{r} \setminus \cup_{Q \in \mathcal{Q}} Q) .\]
The boundary layer term we will bound by applying \lref{domainbdryenergy}, using that $\|\grad u \|_{L^{\infty}(B_r)} \leq L$,
\[J(v,B_{r} \setminus \cup_{Q \in \mathcal{Q}} Q) \leq J(v,B_{r} \setminus B_{(1-C(d)\eta)r}) \leq C(L)\eta^{1/4}|B_r|.\]

We also have, from part (a) and the above,
\begin{align*}
 J_0(\overline{u},B_r) &\leq J(u,B_r) + C(L)[r^{-1/2}+\sigma]|B_r| \\
 &\leq \sum_{Q \in \mathcal{Q} }J(\tilde{v}_{Q},Q)+C(L)[r^{-1/3}+\eta^{1/4}+\sigma]|B_r|
 \end{align*}
and by part (b)
\begin{align*}
\sum_{Q \in \mathcal{Q} }J(\tilde{v}_{Q},Q) 
&\leq \sum_{Q \in \mathcal{Q}}\left\{J_0(v,Q)+ C(L) \eta^{-\frac{d(d+5)+1}{2}} r^{-1/2}|Q|\right\}\\
& \leq J_0(v, B_r)+ C(L)  \eta^{-\frac{d(d+5)+1}{2}}r^{-1/2}|B_r|.
\end{align*}
Combining all the above gives
\[ J_0(\overline{u},B_{r})\leq J_0(v, B_r)+ C(L) (r^{-1/3}+\eta^{-\frac{d(d+5)+1}{2}}r^{-1/2}+\eta^{1/4}+\sigma)|B_{r}|\]
then choose $\eta = r^{-\frac{1}{2}\frac{4}{2d(d+5)+3}}$ to match the middle two error terms on the right.

\end{proof}

\section{Flat free boundaries are (large scale) Lipschitz}\label{s.flatimplieslipschitz}

In this section we will show that the free boundaries of flat minimizers of $J$ are $C^{1,\alpha}$ at intermediate scales and Lipschitz down to the unit scale.  This will prove \tref{main2} as a consequence of \lref{eplip} and \cref{lipbdry} below.  We will also show, in \sref{liouville}, a Liouville result for large scale flat minimizers of $J$ on $\R^d$ \tref{liouville}.

We will combine the homogenization result \lref{energyconv} with the following $C^{1,\alpha}$ flatness improvement result of De Silva and Savin \cite{DeSilvaSavin} for almost minimizers of $J_0$:

\begin{lemma}[De Silva and Savin \cite{DeSilvaSavin} Lemma~4.5]\label{l.dsalmostmin-iteration}
Let $u$ satisfy
 \begin{equation}\label{e.lipandenergy}
  \|\grad u\|_{L^\infty(B_1)} \leq L \ \hbox{ and } \ J_0(u,B_1) \leq J_0(v,B_1)+\sigma \ \hbox{ for all } \ v \in u + H^1_0(B_1).
  \end{equation}
For any $0 < \alpha < 1$ there exist constants $\bar{\delta}, \eta>0$ and $C \geq 1$ depending on $L$ and $\alpha$ so that if $0 \in \partial \{u>0\}$ and, for some $\nu \in S^{d-1}$,
 \[ |u(x) -\tfrac{1}{(\nu \cdot \overline{a} \nu)^{1/2}} (x \cdot \nu)_+| \leq \delta \ \hbox{ in } \ B_1 \]
 for some $0< \delta \leq \bar{\delta}$ then there is $\nu' \in S^{d-1}$ with $|\nu' - \nu| \leq C(\delta+\sigma^{\frac{1}{d+4}})$
 \[ |u(x) -\tfrac{1}{(\nu' \cdot \overline{a} \nu')^{1/2}}(x\cdot \nu')_+ | \leq \eta^{1+\alpha}(\delta+C\sigma^{\frac{1}{d+4}}) \ \hbox{ in } \ B_{\eta}(0).\]
\end{lemma}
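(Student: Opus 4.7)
The plan is to follow the De Silva improvement-of-flatness compactness scheme, adapted to the constant matrix $\overline{a}$ and the almost-minimality error $\sigma$. A preliminary reduction: if $\sigma^{1/(d+4)} \geq c_0\delta$ for a small universal $c_0$ to be fixed below, the conclusion holds trivially with $\nu' = \nu$, since $\delta \leq c_0^{-1}\sigma^{1/(d+4)} \leq \eta^{1+\alpha}(\delta + C\sigma^{1/(d+4)})$ for $C$ chosen large. Hence we may assume $\sigma \leq c_0^{d+4}\delta^{d+4}$. Next, apply the linear change of variables $y = \overline{a}^{-1/2}x$, under which $\overline{a}$-harmonic functions become harmonic and the two-plane $\tfrac{1}{(\nu\cdot\overline{a}\nu)^{1/2}}(x\cdot\nu)_+$ is sent to $(y\cdot\widetilde{\nu})_+$ with $\widetilde{\nu} = \overline{a}^{1/2}\nu/|\overline{a}^{1/2}\nu|$. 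This reduces everything to the classical isotropic one-phase almost-minimization problem in a transformed (ellipsoidal) ball.

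I would then argue by compactness and contradiction. Suppose the conclusion fails for some $\alpha, L$: there exist sequences $u_k$ satisfying \eref{lipandenergy} with $\delta_k \to 0$, $\sigma_k \leq c_0^{d+4}\delta_k^{d+4}$, and unit vectors $\nu_k \to \nu_\infty$, such that the improved flatness fails for the $\eta$ to be fixed below. Define the renormalized displacement
\[ w_k(x) = \frac{u_k(x) - (x\cdot\nu_k)_+}{\delta_k} \]
on $\{u_k>0\} \cap B_{1/2}$. The Lipschitz bound together with non-degeneracy (\lref{nondegen}) confines $\partial\{u_k>0\}$ to an $O(\delta_k)$-neighborhood of $\{x\cdot\nu_k = 0\}$, so $|w_k| \leq 1$. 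A partial Harnack inequality for almost minimizers of the isotropic functional (the central technical ingredient; see the discussion below) then gives uniform Holder continuity of $w_k$ on compact subsets of $\{x\cdot\nu_\infty > 0\}\cap B_{1/2}$, and we extract a locally uniform limit $w_\infty$.

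The limit problem is identified as follows. In the interior, $u_k$ is energy-close to its harmonic replacement with error controlled by $\sigma_k$; dividing by $\delta_k$ and using $\sigma_k/\delta_k^2 \to 0$ one finds $-\Delta w_\infty = 0$ in $\{x\cdot\nu_\infty > 0\}\cap B_{1/2}$. The linearized free boundary condition comes from expanding $|\grad u_k|^2 = 1 + O(\sigma_k^{1/(d+4)})$ around the two-plane ansatz and yields the homogeneous Neumann condition $\partial_{\nu_\infty}w_\infty = 0$ on $\{x\cdot\nu_\infty = 0\}\cap B_{1/2}$. Boundary $C^{1,\alpha}$ regularity for this Neumann problem gives a tangential vector $q\perp\nu_\infty$ with $|q|\leq C(L)$ and
\[ |w_\infty(x) - q\cdot x| \leq C\eta^{1+\alpha} \ \text{ in } \ B_\eta \]
for every $\eta \in (0,1/4)$. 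Setting $\nu_k' = (\nu_k + \delta_k q)/|\nu_k + \delta_k q|$ and expanding gives
\[ \sup_{B_\eta} |u_k(x) - (x\cdot\nu_k')_+| \leq C\delta_k\eta^{1+\alpha} + o(\delta_k\eta) \]
in the isotropic setting, contradicting the assumed failure of the conclusion once $\eta$ is fixed universally small depending on $\alpha$ and $L$. Undoing the affine change of variables converts this back into the estimate stated with $\overline{a}$.

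The hardest step is the partial Harnack inequality for $w_k$ up to the free boundary in the almost-minimizing setting, as opposed to the PDE-based setting; this is also where the exponent $1/(d+4)$ is generated. The mechanism is to convert $L^2$-type almost-minimality into an $L^\infty$ displacement bound via interpolation with the Lipschitz estimate, using $\|u-v\|_{L^\infty}^{d+2} \leq C\|u-v\|_{L^2}^2\|\grad(u-v)\|_{L^\infty}^d$, and absorbing one further power from the flatness parameter $\delta$. Since $u_k$ is not a classical viscosity solution, all comparisons with flat sub- and super-solutions must be executed through energy tests, comparing $u_k$ against its harmonic replacement in the bulk and against translated two-plane competitors near the free boundary, while quantitatively tracking $\sigma_k$ at each step.
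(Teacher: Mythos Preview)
The paper does not prove this lemma at all: it is quoted directly as \cite{DeSilvaSavin}, Lemma~4.5, and the only additional content in the paper is the one-line remark that the anisotropic case reduces to the standard Dirichlet energy via the linear change of variables $v(x)=u(\bar{a}^{1/2}x)$. So there is no ``paper's own proof'' to compare against beyond that reduction.

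Your proposal correctly performs that same reduction and then sketches the original De Silva--Savin compactness argument (partial Harnack, limit to the Neumann problem on the half-space, boundary $C^{1,\alpha}$ for the limit, and the $L^2$-to-$L^\infty$ interpolation that produces the exponent $\tfrac{1}{d+4}$). As a summary of how the cited result is proved this is accurate, but it is supplying substantially more than the present paper does; for the purposes of this paper the statement is a black box.
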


\begin{remark}
Note that \cite{DeSilvaSavin} works with the standard Dirichlet energy, but we can transform to this case by looking at $v(x) = u(\bar{a}^{1/2}x)$.
\end{remark}

We apply \lref{dsalmostmin-iteration} to minimizers of $J$ by doing an upscaling and applying \lref{energyconv} to see that the upscaling has (almost) minimal energy for $J_0$.

First we give the one step flatness improvement.
\begin{lemma}[Improvement of flatness]\label{l.eplip1step}
 Let $u$ satisfy \eref{lipschitzhyp} and \eref{sigmaminhyp} in $B_{2r}$ with $0<\sigma \leq \bar{\sigma}$ sufficiently small so that \lref{nondegen} holds.  For any $0 < \alpha < 1$ there exist constants $\bar{\delta}, \eta>0$ depending on $L$ and $\alpha$ so that if $0 \in \partial \{u>0\}$ and, for some $\nu \in S^{d-1}$,
 \[ |u(x) -\tfrac{1}{(\nu \cdot \overline{a} \nu)^{1/2}} (x \cdot \nu)_+| \leq \delta r \ \hbox{ in } \ B_r \]
 for some $0 \leq \delta \leq \bar{\delta}$ then there is $\nu' \in S^{d-1}$ with $|\nu' - \nu| \leq C[\delta+\sigma^{\frac{1}{d+4}}+ r^{-\omega}]$ and
 \[ |u(x) -\tfrac{1}{(\nu' \cdot \overline{a} \nu')^{1/2}}(x\cdot \nu')_+ | \leq \eta^{1+\alpha}[\delta +C\sigma^{\frac{1}{d+4}}+C r^{-\omega}]r \ \hbox{ in } \ B_{\eta r}\]
 where $\omega>0$ is a dimensional constant.
\end{lemma}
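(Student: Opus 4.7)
The plan is to reduce to the De Silva--Savin flatness improvement for $J_0$-almost minimizers (\lref{dsalmostmin-iteration}) via the quantitative homogenization estimate \lref{energyconv}. In outline: upscale $u$ on $B_r$ to a function $\bar u$ that inherits the flatness (up to a small $L^\infty$ loss) and almost-minimizes the homogenized functional $J_0$; then rescale to the unit ball, apply \lref{dsalmostmin-iteration}, and rescale back, tracking how the upscaling loss $Lr^{2/3}$ in $L^\infty$ and the homogenization energy defect $\sigma' := C(L)[r^{-\omega_0}+\sigma]$ with $\omega_0 = \tfrac{1}{4d(d+5)+6}$ combine with the De Silva--Savin output.

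Concretely, \lref{energyconv}(a) and (c) produce $\bar u \in u + H^1_0(\{u>0\}\cap B_r)$ which, by the cutoff structure of the upscaling from \sref{recallquanthom}, agrees with $u$ near $\partial\{u>0\}$; hence $\{\bar u > 0\} = \{u>0\}\cap B_r$, $0 \in \partial\{\bar u>0\}$, $\|\grad \bar u\|_{L^\infty} \leq L$, and $\|u-\bar u\|_{L^\infty(B_r)} \leq Lr^{2/3}$, so the flatness on $u$ becomes a flatness $(\delta + Lr^{-1/3})r$ on $\bar u$ in $B_r$. Setting $\tilde u(y) := \bar u(ry)/r$ on $B_1$ yields an $L$-Lipschitz $\sigma'$-almost minimizer of $J_0$ with $0 \in \partial\{\tilde u>0\}$ and flatness $\tilde\delta := \delta + Lr^{-1/3}$. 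Taking $\bar\delta$ below the De Silva--Savin threshold and $r$ large enough so that $Lr^{-1/3}$ and $(\sigma')^{1/(d+4)}$ are also small, \lref{dsalmostmin-iteration} returns $\nu'$ with $|\nu' - \nu| \leq C[\tilde\delta + (\sigma')^{1/(d+4)}]$ and new flatness $\eta^{1+\alpha}[\tilde\delta + C(\sigma')^{1/(d+4)}]$ at scale $\eta$. Undoing the rescaling multiplies by $r$, and passing back from $\bar u$ to $u$ via $\|u-\bar u\|_{L^\infty} \leq Lr^{2/3}$ adds another $Lr^{2/3}$ term. Choosing $\omega := \omega_0/(d+4)$ shows that every extra error of the form $Lr^{-1/3}$ or $(\sigma')^{1/(d+4)}$ is controlled by $C[\sigma^{1/(d+4)} + r^{-\omega}]$, which closes both the angle and flatness estimates.

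The main technical hurdle is a mismatch in admissible competitors: \lref{energyconv}(c) delivers almost-minimality of $\bar u$ only against $u + H^1_0(\{u>0\}\cap B_r)$, while \lref{dsalmostmin-iteration} is stated for the larger class $\bar u + H^1_0(B_r)$. One bridges this by first reducing to nonnegative competitors (the positive part cannot increase $J_0$), and then noting that a nonnegative $v \in \bar u + H^1_0(B_r)$ which is nonzero on $\{u=0\}\cap B_r$ pays an extra energy controllable by the boundary-strip estimate \lref{bdrystripenergy}: the flatness hypothesis together with strong non-degeneracy \lref{nondegen} confines $\{u=0\}\cap B_r$ to a neighborhood of $\{x\cdot\nu \leq 0\}\cap B_r$, and any such discrepancy can be absorbed into $\sigma'$ without degrading the final rate.
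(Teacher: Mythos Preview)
Your overall strategy---upscale via \lref{energyconv}, transfer flatness with an $Lr^{-1/3}$ loss, feed $\bar u$ into \lref{dsalmostmin-iteration}, and set $\omega=\omega_0/(d+4)$---is exactly what the paper does, and your bookkeeping of the error terms matches the paper's.

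The one place you diverge is the ``technical hurdle'' paragraph. The paper does not address this at all: in its proof of \lref{eplip1step} it simply writes that \lref{energyconv} gives $J_0(\bar u,B_r)\le J_0(v,B_r)+C(L)[\sigma+r^{-\omega_0}]|B_r|$ for all $v\in u+H^1_0(B_r)$, i.e.\ the \emph{full} competitor class, and then applies \lref{dsalmostmin-iteration} directly. The restriction to $u+H^1_0(\{u>0\}\cap B_r)$ in the statement of \lref{energyconv}(c) appears to be a slip; if you reread the proof of part~(c) taking $v$ to be the $J_0$-minimizer over $u+H^1_0(B_r)$ instead, every step goes through unchanged (the interior Lipschitz estimate for $v$, the downscaling in the lattice cubes, and the comparison against $u$ via its $J$-almost-minimality all use only that $v=u$ on $\partial B_r$). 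So the mismatch you worry about is not really there.

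Your proposed workaround---restricting to nonnegative competitors and then invoking \lref{bdrystripenergy} and \lref{nondegen} to control the discrepancy on $\{u=0\}\cap B_r$---is therefore unnecessary. It is also, as written, not a complete argument: you would need to explain concretely how to modify an arbitrary nonnegative $v\in\bar u+H^1_0(B_r)$ to lie in $u+H^1_0(\{u>0\}\cap B_r)$ while changing $J_0(v)$ by at most $C(L)[\sigma+r^{-\omega_0}]|B_r|$, and ``any such discrepancy can be absorbed'' does not do that. Drop this paragraph and invoke \lref{energyconv}(c) for the full class as the paper does.
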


Then \lref{eplip1step} can be iterated to obtain the following $C^{1,\alpha}$ iteration for $(R_0,\beta)$-almost minimizers.  Note that flatness improves down to a certain intermediate length scale, then gets worse until the iteration is forced to stop at scale $1$. 
\begin{lemma}[Flat implies $C^{1,\alpha}$ iteration for heterogeneous problem]\label{l.eplip}
 Suppose that $u : B_R \to [0,\infty)$ has $\|\grad u \|_{L^\infty(B_R)} \leq L$ and $u$ is an $(R_0,\beta)$-almost minimizer of $J$ in $B_R$. 
 
  There is $\bar{\delta}(d,\Lambda,L)>0$ and $c(d,\Lambda,L,\beta)>0$ so that if $0 < \delta \leq \bar{\delta}$ and
\[  |u(x) -\tfrac{1}{(\nu \cdot \overline{a} \nu)^{1/2}} (x \cdot \nu)_+| \leq \delta R \ \hbox{ in } \ B_R \ \hbox{ for some } \ 1 \leq R \leq cR_0\]
and $0 \in \partial \{u>0\}$ then for any $1\leq r \leq R$ there is a unit direction $\nu'$ so that
\[ |u(x) -\tfrac{1}{(\nu' \cdot \overline{a} \nu')^{1/2}}(x\cdot \nu')_+ | \leq C\left[\left(\frac{r}{R}\right)^\alpha(\delta + (R/R_0)^{\frac{\beta}{d+4}}) + r^{-\omega}\right] r \ \hbox{ for } \ x \in B_{r} \]
and
\[ |\nu' - \nu| \leq C\left[\delta + (R/R_0)^{\frac{\beta}{d+4}} + r^{-\omega}\right]\]
where $\omega>0$ is a dimensional constant.
\end{lemma}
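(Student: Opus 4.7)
The plan is to iterate the one-step flatness improvement \lref{eplip1step} along the geometric sequence of scales $r_k = \eta^k R$, $k = 0, 1, 2, \ldots$, and resolve the resulting recursions for the flatness parameter $\delta_k$ and the approximating direction $\nu_k$. The first preliminary is to translate the $(R_0,\beta)$-almost minimality \eref{almostminimal} into the additive $\sigma$-almost minimality \eref{sigmaminhyp} required by \lref{eplip1step}; this follows directly from the Lipschitz hypothesis $\|\grad u\|_\infty \leq L$ and ellipticity, which together give, for every $B_{r_k} \subset B_R$,
\[ J(u,B_{r_k}) \leq J(v,B_{r_k}) + \sigma_k |B_{r_k}|, \qquad \sigma_k \leq C(L,\Lambda)(r_k/R_0)^\beta.\]
Choosing the constant $c(d,\Lambda,L,\beta)$ in the hypothesis $R \leq cR_0$ sufficiently small makes $\sigma_0 \leq \bar{\sigma}$, so the non-degeneracy \lref{nondegen} and the hypotheses of \lref{eplip1step} remain available at every scale of the iteration.

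Starting from $\nu_0 = \nu$ and $\delta_0 = \delta$, I would apply \lref{eplip1step} inductively, choosing the exponent $\alpha$ of the conclusion at most $\beta/(d+4)$. This produces unit directions $\nu_k$ and flatness parameters $\delta_k$ with
\[ \bigl|u(x) - \tfrac{(x \cdot \nu_k)_+}{(\nu_k \cdot \bar{a}\nu_k)^{1/2}}\bigr| \leq \delta_k r_k \ \text{ in } \ B_{r_k}, \]
together with the coupled recursions
\[ \delta_{k+1} \leq \eta^\alpha \bigl[\delta_k + C\sigma_k^{1/(d+4)} + Cr_k^{-\omega}\bigr], \qquad |\nu_{k+1} - \nu_k| \leq C\bigl[\delta_k + \sigma_k^{1/(d+4)} + r_k^{-\omega}\bigr].\]
Unrolling the first recursion yields
\[ \delta_k \leq \eta^{\alpha k}\delta + C\sum_{j=0}^{k-1}\eta^{\alpha(k-j)}\bigl[(r_j/R_0)^{\beta/(d+4)} + r_j^{-\omega}\bigr]. \]
The first tail is a geometric series with ratio $\eta^{\beta/(d+4) - \alpha} \leq 1$, so it is dominated by a constant multiple of its first term and simplifies to $C(r_k/R)^\alpha(R/R_0)^{\beta/(d+4)}$. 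The second tail is geometric with ratio $\eta^{-(\alpha+\omega)} > 1$, so it is dominated by its last term, giving $Cr_k^{-\omega}$. Combining gives the asserted closed-form bound on $\delta_k$ at the dyadic scales; the direction recursion is summable in exactly the same way and produces a limiting $\nu' = \lim_k \nu_k$ with the stated telescoped estimate.

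The iteration is valid only as long as $\delta_k \leq \bar{\delta}$ so that \lref{eplip1step} continues to apply. By the closed-form bound just obtained, this persists as long as $\delta$, $(R/R_0)^{\beta/(d+4)}$, and $r_k^{-\omega}$ are each below a fixed fraction of $\bar{\delta}$; the first two smallnesses are built into the hypotheses by choosing $\bar{\delta}$ small and shrinking $c$ correspondingly, while the third restricts the iteration to $r_k \geq r_*$ for a universal threshold $r_* = r_*(L)$. For the leftover range $1 \leq r \leq r_*$ the bound is essentially trivial, since $u$ is $L$-Lipschitz with $u(0) = 0$ and $|x \cdot \nu'|/(\nu' \cdot \bar a \nu')^{1/2} \leq Cr$ show that the left-hand side is at most $CLr$, which is dominated by the claimed $Cr \cdot r^{-\omega}$ on this bounded range; passing from the discrete $r_k$ to intermediate $r$ costs only a universal factor of $\eta^{-(1+\alpha)}$. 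The principal technical point, and the reason for requiring $\alpha \leq \beta/(d+4)$, is that the two error sources in the recursion behave oppositely in $k$: the almost-minimality defect $\sigma_k^{1/(d+4)}$ decays geometrically, while the homogenization defect $r_k^{-\omega}$ grows. Each nevertheless sums to the order of its dominant endpoint, and this compatibility determines the final form of the estimate.
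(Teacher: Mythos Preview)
Your proposal is correct and follows essentially the same approach as the paper: iterate \lref{eplip1step} along the geometric sequence $r_k = \eta^k R$, unroll the linear recursion for $\delta_k$, split the resulting sum into the almost-minimality tail (geometric, dominated by its first term when $\alpha < \beta/(d+4)$) and the homogenization tail (dominated by its last term), and stop the iteration when $r_k$ reaches a universal threshold. One small slip in wording: the direction $\nu'$ in the conclusion should be $\nu_k$ for the $k$ corresponding to the given scale $r$, not the limit $\lim_k \nu_k$; the telescoped bound $|\nu_k - \nu| \leq \sum_{j<k}|\nu_{j+1}-\nu_j|$ you describe is exactly what gives the stated estimate on $|\nu'-\nu|$.
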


This maintenance of flatness result down to the microscale directly implies that the free boundary stays a unit distance from a Lipschitz graph over the approximate normal direction $\nu$.

\begin{corollary}\label{c.lipbdry}
Under the hypotheses of \lref{eplip} there is a function $\psi : \{x \cdot \nu = 0\} \cap B_{R/8} \to \R$ with $\psi(0) = 0$ which is $2$-Lipschitz and
\[ \{x\cdot \nu \geq \psi(x')-C\} \cap B_{R/8} \subset \{u>0\} \cap B_{r/2} \subset \{x\cdot \nu \geq \psi(x')+C\} \cap B_{R/8}\]
where $x ' = x - (x \cdot \nu)\nu$, and $C = C(\Lambda,d,L)$.
\end{corollary}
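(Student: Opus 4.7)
The strategy is to apply \lref{eplip} at every free boundary point in $B_{R/8}$, promote the scale-dependent flatness to two-sided slab bounds on $\partial\{u>0\}$, and extract a $2$-Lipschitz graph from the resulting pairwise Lipschitz estimate between free boundary points.

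Fix $y \in \partial\{u>0\}\cap B_{R/8}$. Substituting $x = y$ into the flatness at the origin yields $(y\cdot\nu)_+ \leq C\delta R$, and a short argument using \lref{nondegen} --- if $y\cdot\nu \leq -K\delta R$ for large $K$, flatness forces $u \leq \delta R$ on a ball $B_\rho(y)$ with $\rho = K\delta R/2$, contradicting $\sup_{B_\rho(y)} u \geq c\rho$ once $K > 2/c$ --- gives $|y\cdot\nu| \leq C\delta R$. Re-centering the linear profile at $y$ then changes the error by only $O(\delta R)$, so \lref{eplip} applies at $y$ on scale $R/2$ with a constant multiple of $\delta$ as the new flatness parameter. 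This produces, for every $1 \leq r \leq R/2$, a unit vector $\nu_y^{(r)}$ with $|\nu_y^{(r)} - \nu| \leq \epsilon_r$ and
\[
\Bigl|u(x) - \tfrac{1}{(\nu_y^{(r)}\cdot\bar a\,\nu_y^{(r)})^{1/2}}\bigl((x-y)\cdot\nu_y^{(r)}\bigr)_+\Bigr| \leq \eta_r r \quad \text{in }\ B_r(y),
\]
where $\eta_r, \epsilon_r = O\bigl(\delta + (R/R_0)^{\beta/(d+4)} + r^{-\omega}\bigr)$. The same Hopf-type argument just used at the origin then upgrades this to a two-sided slab inclusion
\[
\partial\{u>0\}\cap B_{r/2}(y) \subset \bigl\{|(x-y)\cdot\nu_y^{(r)}| \leq C\eta_r r\bigr\}.
\]

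For a pair of free boundary points $y_1, y_2 \in \partial\{u>0\}\cap B_{R/8}$, I take $r = \max(2, 4|y_1-y_2|)$, apply the slab bound at $y_1$, and combine with $|\nu_{y_1}^{(r)} - \nu| \leq \epsilon_r$ to get $|(y_2 - y_1)\cdot\nu| \leq C(\eta_r + \epsilon_r)\,r$. A split-scale analysis --- treating $r \leq T$ and $r \geq T$ separately for a large universal threshold $T$ --- absorbs the $r^{1-\omega}$ part of the right-hand side into an additive constant on bounded scales and into the linear coefficient (as $CT^{-\omega}r$) on large scales, yielding
\[
|(y_2 - y_1)\cdot\nu| \leq C_1 |y_1-y_2| + C_2,
\]
with $C_1$ as small as we wish (by choosing $\delta$, $R/R_0$ small and $T$ large) and $C_2$ depending only on universal parameters and $L$. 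Writing $y_i = x_i' + s_i\nu$ for $x_i' \in \{\nu\}^\perp$ and using $|y_1 - y_2| \leq |x_1' - x_2'| + |s_1 - s_2|$, absorption with $C_1 \leq 1/3$ yields the key pairwise Lipschitz estimate $|s_1 - s_2| \leq 2|x_1' - x_2'| + C$ between free boundary points.

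Finally, set
\[
\psi(x') = \inf\bigl\{s_0 + 2|x_0' - x'| : x_0' + s_0\nu \in \partial\{u>0\}\cap B_{R/8}\bigr\}
\]
and then subtract $\psi(0)$ from both sides so that the normalized function (still denoted $\psi$) satisfies $\psi(0) = 0$. This $\psi$ is $2$-Lipschitz by construction, and the pairwise bound forces $|s_0 - \psi(x_0')| \leq C$ for every free boundary point $(x_0', s_0)$ in $B_{R/8}$. The two-sided sandwich inclusion of the corollary then follows by combining this $O(1)$-closeness of $\partial\{u>0\}$ to the graph of $\psi$ with the unit-scale slab bound established above, which pins down the sign of $u$ a bounded distance above and below the approximate normal direction $\nu$. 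The main obstacle is the split-scale argument in the pairwise estimate: the flatness error $\eta_r r$ decays only as $r^{1-\omega}$ for small $r$, so it cannot be fully absorbed into the linear coefficient and must instead produce the additive $O(1)$ defect --- precisely the tolerance constant $C$ that the corollary allows in its statement.
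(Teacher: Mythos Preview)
Your proof is correct and follows essentially the same route as the paper: apply \lref{eplip} centered at each free boundary point, derive the pairwise estimate $|s_1-s_2|\leq 2|x_1'-x_2'|+C$, and take $\psi$ as an envelope of $2$-Lipschitz cones. The only tactical difference is that the paper handles the $r^{1-\omega}$ defect by applying \lref{eplip} directly at the offset scale $s=|y-z|+s_0$, with $s_0$ chosen so that $A=C(\bar\delta+s^{-\omega})\leq\tfrac14$ uniformly, which replaces your split-scale analysis in a single line.
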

Since this Corollary is just a technical reinterpretation of \lref{eplip} we will postpone the proof to \sref{lipbdry}.  

Now we proceed to the proofs of \lref{eplip1step} and \lref{eplip}.

\begin{proof}[Proof of \lref{eplip1step}]
Let $u$ as in the statement, and $\overline{u} \in u + H^1_0(\{u>0\} \cap B_r)$ be the upscaling of $u$ defined in \lref{energyconv} then,  by \lref{energyconv},
\[ J_0(\bar{u},B_r) \leq J_0(v,B_r) + C(L)[\sigma+r^{-\omega_0}]|B_r| \ \hbox{ for all $v \in u+H^1_0(B_{r})$}\]
where $\omega_0$ is the dimensional constant from \lref{energyconv}.  Also
\begin{equation}\label{e.baruests}
 \|\grad \bar{u}\|_{L^\infty(B_r)} \leq 2L \ \hbox{ and } \ \frac{1}{r}\|u - \bar{u}\|_{L^\infty(B_r)} \leq Lr^{-1/3}.
 \end{equation}
 Thus $\overline{u}$ has $\|\grad \bar{u}\|_{L^\infty(B_r)} \leq 2L$,
 \[ |\bar{u}(x) -\tfrac{1}{(\nu \cdot \overline{a} \nu)^{1/2}} (x \cdot \nu)_+| \leq [\delta+ Lr^{-1/3}]r  \ \hbox{ in } \ B_r,\]
 and satisfies 
 \[ J_0(\bar{u},B_r) \leq J_0(v,B_r)+\sigma_0|B_r| \ \hbox{ for all } \ v \in u + H^1_0(B_r)\] 
 with
 \[ \sigma_0 = C(L)[\sigma+r^{-\omega_0}].\]

 Applying \lref{dsalmostmin-iteration} we find, absorbing the $Cr^{-1/2}$ term into the $Cr^{-\frac{\omega_0}{(d+4)}}$ term,
\[| \overline{u}(x)- \tfrac{1}{(\nu' \cdot \overline{a} \nu')^{1/2}}(x\cdot \nu')_+ | \leq \eta^{1+\alpha} [\delta + C\sigma^{\frac{1}{d+4}} + Cr^{-\frac{\omega_0}{(d+4)}}] r \ \hbox{ in } \ B_{\eta r}(0)\]
for some $\nu'$ with
\[|\nu'- \nu| \leq C\left[ \delta + \sigma^{\frac{1}{d+4}} + r^{-\frac{\omega_0}{(d+4)}}\right].\]  
From here we will call $\omega = \frac{\omega_0}{d+4}$.  Then we carry the flatness condition back to $u$ using \eref{baruests}
\[ |u(x) - \tfrac{1}{(\nu' \cdot \overline{a} \nu')^{1/2}}x\cdot \nu'| \leq | \overline{u}(x)- \tfrac{1}{(\nu' \cdot \overline{a} \nu')^{1/2}}(x\cdot \nu')_+ | + C\eta^{-1}r^{-1/2}\eta r \ \hbox{ in } \ B_{\eta r}(0)\]
and the new error term can also be absorbed into the $Cr^{-\omega}$ term.

\end{proof}

\begin{proof}[Proof of \lref{eplip}]
Iterate \lref{eplip1step} taking $r_0 = R$, $\delta_0 = \delta$, 
\[ r_k = \eta r_{k-1}, \ \sigma_k = C(r_k/R_0)^\beta,\]
and
\[ \delta_k = \eta^{\alpha}(\delta_{k-1} + C\sigma_{k-1}^{\frac{1}{d+4}} + Cr_{k-1}^{-\omega}).\]
As long as
\[ \delta_k \leq \bar{\delta}\]
and $\sigma_0 \leq \bar{\sigma}$ then \lref{eplip} applies.  This holds, for example, if
\begin{equation}\label{e.requirements}
 \delta \leq \bar{\delta}/3, \ C(R/R_0)^{\frac{\beta}{d+4}} \leq \min\{\bar{\delta}/3,\bar{\sigma}\}, \ \hbox{ and } \ r_k \geq C\bar{\delta}^{-6(d+4)}
 \end{equation}
with $\bar{\delta}$ from \lref{eplip}.  Note that this requires that $R \leq c R_0$ for a small enough constant $c$ depending on $\beta$ and $\bar{\delta}$.   Call $\bar{k}$ to be the largest integer $k$ so that the third requirement of \eref{requirements} holds, note that $r_{\bar{k}} \leq C(d,\Lambda,L)$.

We compute the iteration of $\delta_k$
\begin{align*}
 \delta_k &= \eta^\alpha (\delta_{k-1} + C\sigma_{k-1}^{\frac{1}{d+4}} + Cr_{k-1}^{-\omega}) \\
 &= \eta^{k\alpha}\delta_0 + C\sum_{j=0}^{k-1} \eta^{(k-j)\alpha}[\sigma_{j}^{\frac{1}{d+4}} + r_{j}^{-\omega}] \\
 &\leq \eta^{k\alpha}\delta_0 + C\sum_{j=0}^{k-1} \eta^{(k-j)\alpha} [(r_j/R_0)^{\frac{\beta}{d+4}} + r_j^{-\omega}]\\
 &\leq \eta^{k\alpha}\delta_0 + C\sum_{j=0}^{k-1} \eta^{(k-j)\alpha} [\eta^{\frac{\beta}{d+4}j}(R/R_0)^{\frac{\beta}{d+4}} + \eta^{-{\omega} j}R^{-{\omega} }]\\
 & \leq \eta^{k\alpha} \delta_0 + C\eta^{k\alpha}(R/R_0)^{\frac{\beta}{d+4}} + C\eta^{ - k{\omega} }R^{-{\omega} } \\
 & \leq \left[ (r_k/R)^\alpha(\delta_0 + C(R/R_0)^{\frac{\beta}{d+4}}) + r_k^{-{\omega} }\right]
 \end{align*}
using for the second to last line that we can take $0 < \alpha < \frac{\beta}{d+4}$. We also obtain the following estimate of the variation of the normal directions
\[ |\nu_k- \nu_0| \leq \sum_{j=1}^k |\nu_j - \nu_{j-1}| \leq C \sum_{j=0}^{k-1} (\delta_{j}+\sigma_{j}^{\frac{1}{d+4}} + Cr_{j}^{-{\omega} }) \leq C[\delta_0 +(R/R_0)^{\frac{\beta}{d+4}}+ r_k^{-{\omega} }].\]
We are using several times here that we are summing geometric series and the finite sum is bounded by a constant times the largest term, of course the constant depends on $\eta$, which depends on $d,\Lambda,L$, and on $\beta$.

Finally the intermediate radii $r_{k} < r < r_{k-1}$ or $1 \leq r \leq r_{\bar{k}}$ are estimated by using the flatness on $B_{r_{k-1}}$ or $B_{r_{\bar{k}}}$ respectively.
\end{proof}
\begin{remark}\label{r.C1alpha-iteration-dini-hyp}
The almost minimality hypothesis in \lref{eplip} could be replaced by a more general one: for all $1 \leq r \leq R$ and any $v \in u+H^1_0(B_r(0))$
\begin{equation}\label{e.almostminimal-omeg2}
 J(u,B_r(0)) \leq J(v,B_r(0)) + \omega(r/R_0)|B_r|,
 \end{equation}
 as long as $\omega^{\frac{1}{d+4}}$ is a Dini modulus with $\omega(1) \leq 1$.  In the proof the parameter $\sigma_k$ would change to $\sigma_k = \omega(r_k/R_0) = \omega(\eta^kR/R_0)$.  This would then affect the estimate of the term
 \[ \sum_{j=0}^{k-1} \eta^{(k-j)\alpha} \omega(\eta^jR/R_0)^{\frac{1}{d+4}} \leq \eta^{k\alpha/2} + \sum_{k/2}^{k-1}\omega(\eta^jR/R_0)^{\frac{1}{d+4}} \leq \eta^{k\alpha/2}+C(\eta)\int_{0}^{\eta^{k/2}}\omega(t)^{\frac{1}{d+4}} \frac{dt}{t}.\]
This results in a decay of oscillations with an altered modulus
 \[ \tilde{\omega}(\frac{r}{R}) = \int_0^{(r/R)^{1/2}}\omega(t)^{\frac{1}{d+4}} \frac{dt}{t}\]
 assuming that this is not better than the $(r/R)^{\alpha/2}$ modulus.
\end{remark}
\subsection{Liouville property of flat energy minimizers on $\R^d$}\label{s.liouville} We conclude the section with an interesting corollary about the plane-like property of large scale flat {absolute minimizers} on the whole space (global). 

A function $u \in H^1_{loc}(\R^d)$ is called an absolute minimizer if it is minimizing with respect to compact perturbations, i.e. if
\[ J(u,B) \leq J(v,B) \ \hbox{ for all } \ v \in u + H^1_0(B)\]
where $B$ is \emph{any} finite radius ball in $\R^d$.  This property is also referred to as class-A minimizer in the literature \cite{CaffarellidelaLlave}. 

It is known that there are global plane-like absolute minimizers \cite{Valdinoci}, i.e. the minimizers are sandwiched between two translations of a planar solution of the effective problem $(\nu \cdot \overline{a}\nu)^{-1}(\nu \cdot x)_+$.  In general it seems to be a very difficult problem to classify all the global absolute minimizers. Even for homogeneous media there can be non planar global absolute minimizers in higher dimensions \cite{deSilva}, which is why the flatness assumption we have been using is essential. The following result says that all large scale flat global absolute minimizers grow sublinearly away from a planar solution of the effective problem.  It is not clear whether such solutions are a \emph{constant} distance from a planar solution of the effective problem, this seems like a very difficult question. 

\begin{theorem}\label{t.liouville}
Suppose that $u$ is an absolute minimizer of $J$ on $\R^d$ with
\[L = \limsup_{r \to \infty} \|\grad u\|_{\underline{L}^2(B_r)} < + \infty \]
and
 \[ \frac{1}{r}\inf_{\nu \in S^{d-1}}\sup_{B_r} |u(x) - \tfrac{1}{(\nu \cdot \bar{a} \nu)^{1/2}}(x \cdot \nu)_+| \leq \delta \]
  for all $r$ sufficiently large. 

There is $\bar{\delta}$ depending on universal constants and $L$ so that if $0 < \delta \leq \bar{\delta}$ then there is a slope $\nu_* \in S^{d-1}$ with $|\nu_* - \nu| \leq C\delta$ so that for all $r \geq 1$
\[ \frac{1}{r}\sup_{B_r} |u(x) - \tfrac{1}{(\nu_* \cdot \bar{a} \nu_*)^{1/2}}(x \cdot \nu_*)_+| \leq C r^{-{\omega} }.\]
\end{theorem}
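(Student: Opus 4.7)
The plan is to iterate \lref{eplip} at a fixed free boundary point near the origin and pass to a limit as the scale tends to infinity. Since $u$ is an absolute minimizer on $\R^d$ we have $R_0 = +\infty$ and the almost-minimality error terms of the form $(R/R_0)^\beta$ in \lref{eplip} vanish; also \tref{main1} together with finiteness of $L$ gives $\|\grad u\|_{L^\infty(\R^d)} \leq C(L)$, so $u$ is globally Lipschitz.

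Writing $P_\nu(x) := (\nu \cdot \bar{a}\nu)^{-1/2}(x\cdot\nu)_+$, the hypothesis produces for each sufficiently large $R$ a slope $\nu_R \in S^{d-1}$ with $\sup_{B_R}|u - P_{\nu_R}| \leq \delta R$. Combining flatness with the Lipschitz estimate and strong non-degeneracy (\lref{nondegen}) shows $\partial\{u>0\}$ lies inside the slab $\{|x\cdot\nu_R| \leq C\delta R\}$ and meets it non-trivially in $B_R$. Fix once and for all a threshold $R_*$ and a point $y_* \in \partial\{u>0\}$ with $|y_*| \leq C\delta R_*$. Re-centering the flatness bound at $y_*$ and absorbing the translation error $|P_{\nu_R}(x) - P_{\nu_R}(x-y_*)|\leq C|y_*|$ into $C\delta R$, then applying \lref{eplip} at $y_*$ at scale $R/2$ yields, for each $1 \leq r \leq R/4$, a slope $\nu_R(r)$ with
\[ \sup_{B_r(y_*)}|u - P_{\nu_R(r)}(\cdot - y_*)| \leq C[(r/R)^\alpha \delta + r^{-\omega}]\, r \ \hbox{ and } \ |\nu_R(r) - \nu_R| \leq C[\delta + r^{-\omega}]. \]

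Taking $R_n \to \infty$ and extracting, via compactness of $S^{d-1}$ and a diagonal argument over a countable dense set of scales, a subsequence along which $\nu_{R_n} \to \nu_\infty$ and $\nu_{R_n}(r) \to \tilde\nu(r)$ for every rational $r \geq 1$, the bound above passes to the limit as
\[ \sup_{B_r(y_*)}|u - P_{\tilde\nu(r)}(\cdot - y_*)| \leq C r^{1-\omega}. \]
For rational $r_1 < r_2$, comparing the two flatness bounds inside $B_{r_1}(y_*)$ and using that $\sup_{B_s}|P_{\nu_1}-P_{\nu_2}| \geq cs|\nu_1-\nu_2|$ for small $|\nu_1-\nu_2|$ (take $x = s(\nu_1-\nu_2)/|\nu_1-\nu_2|$) gives $|\tilde\nu(r_1) - \tilde\nu(r_2)| \leq Cr_2^{1-\omega}/r_1$; specializing to $r_2 = 2r_1$ and summing dyadically shows $\tilde\nu$ is Cauchy and converges to some $\nu_* \in S^{d-1}$ with $|\tilde\nu(r) - \nu_*| \leq Cr^{-\omega}$.

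Substituting the slope bound into the flatness bound yields $\sup_{B_r(y_*)}|u - P_{\nu_*}(\cdot - y_*)| \leq Cr^{1-\omega}$; translating from $B_r(y_*)$ to $B_r(0)$ enlarges the ball by the fixed constant $|y_*|$ and introduces a constant shift of $P_{\nu_*}$, both absorbable into $Cr^{1-\omega}$ for $r \geq 1$, yielding the main flatness conclusion. The bound $|\nu_* - \nu| \leq C\delta$ follows by combining $|\nu_{R_n}(r) - \nu_{R_n}| \leq C[\delta + r^{-\omega}]$ with $|\tilde\nu(r)-\nu_*|\leq Cr^{-\omega}$ in the limits $R_n \to \infty$ and $r \to \infty$. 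The main obstacle is establishing the dyadic Cauchy property for $\tilde\nu$: \lref{eplip} controls the slope change between the initial scale $R$ and an intermediate scale $r$ but does not directly compare slopes at different intermediate scales, so the cross-scale comparison must be performed via the two planar approximations evaluated in the smaller ball together with the linear growth of $P_{\nu_1}-P_{\nu_2}$.
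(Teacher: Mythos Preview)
Your argument is correct and follows the same outline as the paper's: apply \lref{eplip} at each large scale $R$, send $R\to\infty$ so the $(r/R)^\alpha\delta$ term disappears, then show the approximate slopes at different scales are dyadically Cauchy by comparing the two planar approximations in the smaller ball. The paper's execution is leaner in one spot: it skips the compactness/diagonal extraction entirely, observing that for each fixed $r$ the inequality $\frac{1}{r}\inf_{\nu}\sup_{B_r}|u-P_\nu|\leq C[(r/R)^\alpha\delta+r^{-\omega}]$ already yields $\leq Cr^{-\omega}$ as $R\to\infty$, after which one simply selects any minimizing $\nu(r)$ at each scale and runs the same Cauchy comparison you describe. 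On the other hand, you are more careful than the paper about re-centering at an actual free boundary point $y_*$ before invoking \lref{eplip} (whose hypothesis requires $0\in\partial\{u>0\}$); the paper tacitly treats the origin as a free boundary point.
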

\begin{remark}\label{r.c1alpha-difficulty}{\rm 
This kind of Liouville property is a standard corollary of the $C^{1,\alpha}$ estimate, see \cite{AKM,ArmstrongShen,AvellanedaLinHO,MoserStruwe} for related results.  In the case of standard elliptic homogenization the decay $Cr^{-\delta}$ can be upgraded to $Cr^{-1}$, the key point is subtracting off an appropriate corrector and then re-applying the estimates.  This relies on invariance of the equations with respect to vertical translations $u \mapsto u + c$, our PDE is more analogous to the minimization of integral functionals $\int a(x,u) \grad u \cdot \grad u \ dx$ where $a$ is periodic in $(x,u)$ and this invariance is missing.  See Moser \cite{Moser} and Moser and Struwe \cite{MoserStruwe} for some discussion of related Liouville questions for those models.  }
\end{remark}
\begin{proof}
By \tref{main1} $u$ is Lipschitz in $\R^n$ with constant $CL$ with $C$ universal.  Let $$\bar{\delta}(d,\Lambda,\|\grad u\|_{L^\infty})>0$$ from \lref{eplip}.  By \lref{eplip}, there is $\alpha <1$ so that for all $R$ sufficiently large (so that the $\bar{\delta}$ flatness assumption holds) and for $1 \leq r \leq R$
\[ \frac{1}{r}\inf_{\nu \in S^{d-1}} \sup_{B_r}|u(x) - \frac{1}{(\nu \cdot \bar{a} \nu)^{1/2}}(x \cdot \nu)_+| \leq  C\left((\frac{r}{R})^\alpha\left[\frac{1}{R}\inf_{\nu \in S^{d-1}} \sup_{B_R}|u(x) - \frac{1}{(\nu \cdot \bar{a} \nu)^{1/2}}(x \cdot \nu)_+|\right] + \frac{1}{r^{\omega}}\right)\]
First of all, since the term in square brackets on the right is smaller than $\delta$ for all $R$ sufficiently large, sending $R \to \infty$ we find, 
\[\frac{1}{r}\inf_{\nu \in S^{d-1}} \sup_{B_r}|u(x) - \frac{1}{(\nu \cdot \bar{a} \nu)^{1/2}}(x \cdot \nu)_+| \leq C r^{-\omega}. \]
For each $r \geq 1$ let $\nu(r) \in S^{d-1}$ achieve the previous infimum.  Then this estimate shows that for all $r \leq s \leq 2r$
\[ \sup_{B_r}|\frac{1}{(\nu(s) \cdot \bar{a} \nu(s))^{1/2}}(x \cdot \nu(s))_+ - \frac{1}{(\nu(r) \cdot \bar{a} \nu(r))^{1/2}}(x \cdot\nu(r))_+| \leq C r^{1-\omega}\]
applying this with $x =  \frac{1}{\Lambda}r (\nu(s) \cdot \bar{a} \nu(s))^{1/2}\nu(r)$ and with $x = \frac{1}{\Lambda}r (\nu(r) \cdot \bar{a} \nu(r))^{1/2}\nu(s)$ gives
\[ |\nu(r) \cdot \nu(s) - \frac{(\nu(s) \cdot \bar{a} \nu(s))^{1/2}}{(\nu(r) \cdot \bar{a} \nu(r))^{1/2}}| + |\nu(r) \cdot \nu(s) - \frac{(\nu(r) \cdot \bar{a} \nu(r))^{1/2}}{(\nu(s) \cdot \bar{a} \nu(s))^{1/2}}| \leq Cr^{-\omega}\]
since $\nu(r) \cdot \nu(s)$ is close to two numbers which are reciprocals of one another
\[ \frac{1}{2}|\nu(r) - \nu(s)|^2 = 1-\nu(r) \cdot \nu(s) \leq Cr^{-\omega}.\]
Thus the sequence $\nu(2^kr_0)$ converges as $k \to \infty$, call $\nu_*$ to be the limit. Now for any $r \geq r_0$ choose $k$ so that $2^{k-1}r_0 < r \leq 2^k r$ and then, using the previous estimate again,
\[ |\nu(r) - \nu_*| \leq |\nu(r) - \nu(2^kr_0)| + \sum_{j=k}^\infty |\nu(2^{j+1}r_0) - \nu(2^jr_0)| \leq C r^{-\omega}.\]
\end{proof}

\appendix

\section{Proofs of technical results}
In this section we provide the detailed proofs of several of the technical results appearing in the paper.

\subsection{Proof of \lref{poissonkernel}}\label{s.poissonkernel}

The result is a standard Hopf Lemma barrier argument, using that $a$ is Lipschitz to write the equation in non-divergence form. We just need to establish that the Green's function $G(x,y)$ for the operator $- \grad\cdot(a(rx) \grad \cdot)$ in $B_1$ has
\[ \inf_{y\in B_{\eta}} G(0,y) \geq c\]
for $\eta$ and $c$ depending on universal constants and $r_0$.  

For this take $G_0(x,y)$ to be the Green's function for $- \grad \cdot (a(0) \grad \cdot)$ in $B_1$ and $G(x,y)$ to be the Green's function for $- \grad\cdot(a(rx) \grad \cdot)$ in $B_1$.  Call $w(x) = G(x,0) - G_0(x,0)$ so that
\[ - \grad \cdot(a(0) \grad w) = \grad \cdot ((a(rx)-a(0)) \grad G(x,0))  \ \hbox{ and } \ w = 0 \ \hbox{ on } \ \partial B_1.\]
Then
\[ w(x) = \int_{B_r} \grad G_0(x,y) (a(y) - a(0))\grad G(y,0) \ dy. \]
Applying Green's function bounds \cite{MasmoudiGerardVaret}[Lemma 2.5],
\[ |\grad G(x,y)| \leq C|x-y|^{1-n}\]
 which hold uniformly in $r>0$, and the H\"older continuity of $a$
\begin{align*}
|w(x)| \leq C \int_{B_1} |x-y|^{1-n} r^{\gamma}|y|^\gamma |y|^{1-n} \ dy.
\end{align*}
We estimate in a standard way splitting the integral
\begin{align*}
|w(x)| &\leq C r^\gamma \bigg[\int_{1 \geq |y| \geq 2|x|} |y|^{\gamma -2(n-1)} \ dy + \int_{|y| \leq |x|/2} |x|^{1-n} |y|^{\gamma +1-n} \ dy\cdots\\
&\quad \quad \quad \quad  \cdots+\int_{ |x|/2 \leq |y| \leq 2|x|} |x-y|^{1-n} |x|^{\gamma +1-n} \ dy\bigg]
\end{align*}
this results in
\[ |w(x)| \leq Cr^\gamma |x|^{2-n+\gamma}.\]
Because $G_0$ solves a constant coefficient equation we have $G_0(x,0) \geq c|x|^{2-n}$ so
\[ G(x,0) \geq G_0(x,0) - w(x) \geq (c-Cr^\gamma|x|^\gamma)|x|^{2-n}\]
so we can guarantee a universal positive lower bound on $|x| \leq cr_0^{-1}$.
\qed

\subsection{Proof of \lref{domainbdryenergy}}\label{s.domainbdryenergy}

Without loss, by a linear transformation, we can assume that $\overline{a} = id$.  This comes at the cost of changing the coefficient of ${\bf 1}_{\{u>0\}}$ to $\textup{det}(\overline{a})$, since this is just another constant depending on ellipticity we will just pretend it is $1$ to reduce the number of constants below.

Step 1 (H\"older continuity):  We consider continuity at a boundary point $y \in \partial B_1$.  Call $L = \max\{1,\|\grad g\|_\infty\}$. First we use a barrier argument to show that $u$ is positive in a ball of radius $r = c(d)L^{-1}g(y)$ centered at $y$.  Then $u$ is harmonic in $B_1 \cap B_r(y)$.  

Since $u$ is subharmonic in the entire $B_1$, see \cite{AltCaffarelli}, we can do a standard barrier argument from above to find, for any $0 < \alpha < 1$,
\[ \max_{B_1 \cap B_\rho(y)}u \leq g(y) + C(d,\alpha)L \rho^\alpha.   \]
This argument also works from below if $r \geq 2$ since then $u$ is harmonic in the entire ball $B_1$.

 In the case $r \leq 2$ we can apply a similar barrier argument from below in $B_r(y) \cap B_1$ to find for all $0 < \alpha < 1$ and $0 \leq \rho \leq r$
\[ \min_{B_1 \cap B_\rho(y)} u \geq g(y) - C(d,\alpha)g(y)(\rho/r)^\alpha = g(y) - CL^{\alpha}g(y)^{1-\alpha}\rho^{\alpha}.  \]
Note that since $r \leq 2$ also $L^{\alpha}g(y)^{1-\alpha} \leq C L$ so the estimate from below is of the same form as the one from above.

We just need to check the positivity in $B_r(y)$ for $r = c_0 L^{-1}g(y)$ and $c_0>0$ chosen sufficiently small dimensional.   We can assume to start that $c_0>0$ small enough so that
\[ \inf_{B_{2r}(y)} g \geq g(y) - 2L r \geq \frac{1}{2}g(y).\]
 Let $\phi$ be the harmonic function in $B_{2} \setminus B_{1}$ which is equal to $1$ on $\partial B_{1}$ and $0$ on $\partial B_2$, extend $\phi$ by $0$ outside of $B_2$.  Make a family of barriers with $r = c_0L^{-1}g(y)$
\[ \psi_{t}(x) = \frac{1}{2}g(y)  \phi(r^{-1} (x-z_t))\]
where $(z_t)_{t \in [0,1]}$ is 
\[z_t = (1+\frac{2r}{(1+t)})y \]
so that $B_{2r}(z_0)$ is exterior tangent to $B_1$ at $y$ and $B_{r}(z_1)$ is exterior tangent to $B_1$ at $y$.  Note that 
\[ \psi_t \leq \frac{1}{2}g(y) \leq g(x) \ \hbox{ for $x$ in $B_{2r}(z_t) \cap B_1 \subset B_{2r}(y) \cap B_1$}\]
and $\psi_t = 0 \leq g$ outside of $B_{2r}(z_t)$.

The plan is to show that $\psi_t$ are subsolutions of the Euler-Lagrange equation and below $g$ on $\partial B_1$ and then slide inwards from $t=0$ to $t=1$ to find $\psi_1 \leq u$.  Note that $\psi_0 \leq u$ in $B_1$ because $\psi_0 \equiv 0$ in $B_1$.  We have also established above that $\psi_t \leq g$ on $\partial B_1$ for all $t \in [0,1]$.  

Note that on $\partial B_{2r}(z_t)$
\[ |\grad \psi_t| = c(d)c_0^{-1}L \geq 1\]
where the last inequality comes from $L\geq 1$ and choosing $c_0$ large enough dimensional.  In particular $\psi_t$ is a smooth strict subsolution of the Euler-Lagrange equation for $J_0$ outside of $B_{r}(z_t)$.  

Technically we are using that $u$ being a $J_0$ minimizer in $B_1$ is a viscosity solution of the Euler-Lagrange equation
\[ - \Delta u = 0 \ \hbox{ in } \ \{u>0\} \cap B_1\ \hbox{ and } \ |\grad u| = 1 \ \hbox{ on } \ \partial \{u>0\} \cap B_1.\]
Then we are applying a sliding maximum principle, for reference see \cite{CaffarelliSalsa}[Theorem 2.2].

Step 2 (Energy comparison): Define an energy competitor
\[v = \phi_\delta g + u (1-\phi_\delta) \]
where $\phi_\delta$ is a Lipschitz function with $0 \leq \phi_\delta \leq 1$, $|\grad \phi_\delta| \leq C(\delta r)^{-1}$, and
\[ \phi_\delta(x) = \begin{cases} 1 & |x| \leq (1-2\delta)r \\
0 & |x| \geq  (1-\delta)r.
\end{cases}\]
  Since $\phi_\delta \equiv 0$ in a neighborhood of $\partial B_r$ we do have $v \in H^1_0(U) + g$.  

Energy comparison gives
\[ J(u,B_r) \leq J(v,B_r) = J(u,B_{(1-2\delta)r}) + J(v,B_r \setminus B_{(1-2\delta)r})\]
so
\begin{align*}
 J(u,B_r \setminus B_{(1-\delta)r}) &\leq J(v,B_r \setminus B_{(1-2\delta)r}) - J(u,B_{(1-\delta)r} \setminus B_{(1-2\delta)r})\\
 &\leq \int_{B_r \setminus B_{(1-2\delta)r}} |\phi_\delta \grad g + (1-\phi_\delta) \grad u + (g - u)\grad \phi_\delta|^2 \ dx \\
 & \quad \quad \cdots - \int_{B_{(1-\delta)r} \setminus B_{(1-2\delta)r}}|\grad u|^2dx + \Lambda|B_r \setminus B_{(1-2\delta)r}| \\
  &\leq \int_{B_r \setminus B_{(1-2\delta)r}} (1+\eta)|\phi_\delta \grad g + (1-\phi_\delta) \grad u|^2 + (1+\frac{1}{\eta})|(g - u)\grad \phi_\delta|^2 \ dx \\ & \quad \quad \cdots- \int_{B_{(1-\delta)r} \setminus B_{(1-2\delta)r}}|\grad u|^2dx + \Lambda|B_r \setminus B_{(1-2\delta)r}| \\
    &\leq \int_{B_r \setminus B_{(1-2\delta)r}} (1+\eta)\phi_\delta |\grad g|^2 + (1+\eta)(1-\phi_\delta) |\grad u|^2 + (1+\frac{1}{\eta})|(g - u)|^2|\grad \phi_\delta|^2 \ dx \\
    & \quad \quad \cdots- \int_{B_{(1-\delta)r} \setminus B_{(1-2\delta)r}}|\grad u|^2dx + \Lambda|B_r \setminus B_{(1-2\delta)r}|\\
        &\leq (\Lambda+2L^2)|B_r \setminus B_{(1-2\delta)r}| + \int_{B_r \setminus B_{(1-2\delta)r}}\eta  |\grad u|^2 + (1+\frac{1}{\eta})|(g - u)|^2|\grad \phi_\delta|^2 \ dx   \\
        & \leq \eta  \|\grad u\|^2_{L^2(B_r)} + C(1+\eta^{-1} \delta^{2(\alpha-1)}r^{-2})|B_r \setminus B_{(1-2\delta)r}|.
 \end{align*}
 We used Young's inequality in the form $2 ab \leq \eta a^2 + \frac{1}{\eta} b^2$ for the third inequality, and we used convexity of $a \mapsto |a|^2$ for the fourth inequality. Note that 
 \[\|\grad u\|^2_{L^2(B_r)} \leq J(u,B_r) \leq J(g,B_r) \leq C(L)|B_r|\]
 and $|B_r \setminus B_{(1-2\delta)r}| \leq C \delta |B_r|$ so
 \[ J(u,B_r \setminus B_{(1-\delta)r}) \leq C(\eta +\delta+\eta^{-1} \delta^{2(\alpha-1)+1}r^{-2}) |B_r|\]
 choosing $\eta = \delta^{\frac{1}{2} + (\alpha - 1)}r^{-1}$ to match the first and third terms
 \[ J(u,B_r \setminus B_{(1-\delta)r}) \leq C(\delta + r^{-1}\delta^{\alpha - \frac{1}{2}}) |B_r|.\]

\subsection{Proof of \cref{lipbdry}}\label{s.lipbdry}

We can take the flatness direction in the assumption to be $\nu = e_d$.  By the assumption of the corollary then
\[ |u(x) -\tfrac{1}{e_d \cdot \overline{a} e_d} (x_d)_+| \leq \delta r.\]
Let $y = (y',y_d)$ and $z = (z',z_d)$ be two points in $\partial \{u>0\} \cap B_{r/8}$.  So, in particular, 
\[ z \in B_{r/4}(y) \subset B_{r/2}(y) \subset B_{5r/8}(0)\]  
Now we aim to apply \lref{eplip} to $u$ in $B_{r/2}(y)$.  By the original flatness assumption $u$ is $2 \delta r$-flat in $B_{r/2}(y)$ so as long as $4 \delta \leq \bar{\delta}$ from \lref{eplip} we can apply the result at the scale $s = |y-z| + s_0$ to obtain
\[ |u(x) -\tfrac{1}{(\nu \cdot \overline{a} \nu)^{1/2}} (x \cdot \nu)_+| \leq A r \ \hbox{ with } \ A = C(\bar{\delta} + s^{-\omega})\]
 for a direction $\nu$ with
\[ |\nu - e_d| \leq A\]
where the scale $s_0$ is chosen depending on $L$ and universal constants so that we can have $A \leq \frac{1}{4}$, also choosing $\bar{\delta}$ smaller if necessary.

In particular
\[ |(z-y) \cdot \nu| \leq A s = A (|z-y| + s_0)\]
and
\[ |(z-y) \cdot \nu| \geq |(z-y) \cdot e_d| - |(z-y) \cdot(e_d - \nu)| \geq |x_d - z_d| - A|z-y|.\]
So
\[ |z_d - y_d| \leq 2A(|z-y|+s_0) \leq  \frac{1}{2}(|z_d-y_d| + |z'-y'|+s_0) \]
and we can move the first term over to the left and get
\begin{equation}\label{e.zdyd}
 |z_d - y_d| \leq 2|z'-y'| +2s_0.
 \end{equation}
The final conclusion follows then from taking the graph $\psi$ to be the supremum of cones $a - 2|x'-x_0'|$ whose subgraph lies inside $\{u>0\}$ in $B_{r/8}$.  Then \eref{zdyd} implies that $\partial \{u>0\}$ is contained between the graphs of $\psi$ and $\psi + 2s_0$.

\qed

\subsection{Density estimates}\label{s.density} In this appendix we show interior and exterior density estimates.

The main result of the section is
\begin{proposition}\label{p.densityestimates}
Suppose that $u$ is non-negative $H^1$ function in a ball $B_{r}(0)$ with
\begin{enumerate}[label = (\alph*)]
\item $u$ is $L$-Lipschitz
\item $u$ is $\ell$-non-degenerate at its free boundary
\item There is $\sigma>0$ so that for any ball $B_\rho \subset B_r(0)$
\[ J(u,B_\rho) \leq J(v,B_\rho)+\sigma |B_\rho| \]
where $v \in u + H^1_0(B_\rho)$ is the $a$-harmonic replacement.
\end{enumerate}
There is $\sigma_0>0$ and $c>0$, $C \geq 1$ depending on $L$, $\ell$ and universal parameters so that if $\sigma \leq \sigma_0$ then
\begin{equation}\label{e.iedensity} 
 c\leq \frac{|\{u>0\} \cap B_\rho(x)|}{|B_\rho|} \leq 1-c \ \hbox{ for any } \ B_\rho \subset B_r(0).
\end{equation}
\end{proposition}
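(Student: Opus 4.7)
The statement must be read with $B_\rho(x)$ centered at a free boundary point $x\in\partial\{u>0\}$; otherwise both inequalities fail trivially in the interior of the positivity or zero set. Fix such $x$ and $B_\rho(x)\subset B_r(0)$.

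\emph{Interior density.} This follows immediately from the strong non-degeneracy estimate \lref{nondegen} together with the Lipschitz hypothesis \eref{lipschitzhyp}. Applying \lref{nondegen} in $B_{\rho/2}(x)$ produces $y\in B_{\rho/2}(x)$ with $u(y)\ge c\ell\rho$. The Lipschitz bound then gives $u>0$ throughout $B_{c\ell\rho/(2L)}(y)\subset B_\rho(x)$, yielding $|\{u>0\}\cap B_\rho(x)|/|B_\rho|\ge (c\ell/(2L))^d$.

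\emph{Exterior density.} Let $v$ be the $a$-harmonic replacement of $u$ in $B_\rho(x)$. Non-degeneracy applied at a slightly larger scale, combined with Lipschitz continuity, ensures $u\not\equiv 0$ on $\partial B_\rho(x)$, so the strong maximum principle gives $v>0$ on $B_\rho(x)$ and $\{v>0\}\supset\{u>0\}$. Testing \eref{sigmaminhyp} against $v$ and using the $a$-harmonic replacement identity yields $\int_{B_\rho(x)} a\nabla(u-v)\cdot\nabla(u-v)\le \Lambda^2|\{u=0\}\cap B_\rho(x)|+\sigma|B_\rho|$, and Poincar\'e (since $u-v\in H^1_0(B_\rho(x))$) converts this into
\[
\int_{B_\rho(x)}(v-u)^2\,dy\ \le\ C\rho^2\bigl(|\{u=0\}\cap B_\rho(x)|+\sigma|B_\rho|\bigr). \qquad (\dagger)
\]
The crux is the pointwise lower bound $v(x)\ge c(\ell,L)\rho$. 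For true minimizers ($\sigma=0$) this is classical: $u$ is a subsolution of $-\nabla\cdot(a\nabla\cdot)$ in $B_\rho(x)$, hence $u\le v$ there, and by non-degeneracy $\sup_{\partial B_\rho(x)} u=\sup_{B_\rho(x)}v\ge\sup_{B_\rho(x)}u\ge c\ell\rho$. Lipschitz continuity then produces a subset of $\partial B_\rho(x)$ of $(d-1)$-measure $\gtrsim (\ell/L)^{d-1}\rho^{d-1}$ on which $u\ge c\ell\rho/2$, and integrating against the Poisson kernel lower bound \lref{poissonkernel} (rescaled to $B_\rho(x)$) gives $v(x)\ge c(\ell,L)\rho$. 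For almost minimizers the subsolution property holds only up to an $O(\sigma)$ error, which is handled by running the comparison at a slightly enlarged ball and absorbing the error into $\sigma$ via $(\dagger)$.

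\emph{Conclusion.} The interior Lipschitz estimate \tref{hominteriorreg}(1) applied to the $a$-harmonic $v$ (with $\|v\|_{L^\infty(B_\rho)}\le L\rho$ by the maximum principle) gives $\|\nabla v\|_{L^\infty(B_{\rho/2}(x))}\le CL$; combined with $u(y)\le L|y-x|$ (from $u(x)=0$ and Lipschitz), one finds $v-u\ge c(\ell,L)\rho/2$ throughout $B_{r_0}(x)$ for some $r_0=c'(\ell,L)\rho$. Hence the left side of $(\dagger)$ is bounded below by $c(\ell,L)\rho^2|B_\rho|$, which combined with the right side forces
\[
c(\ell,L)\ \le\ C\Bigl(\tfrac{|\{u=0\}\cap B_\rho(x)|}{|B_\rho|}+\sigma\Bigr).
\]
Choosing $\sigma_0(\ell,L)$ sufficiently small yields $|\{u=0\}\cap B_\rho(x)|/|B_\rho|\ge c(\ell,L)$, i.e.\ the exterior density. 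The main obstacle is the pointwise lower bound $v(x)\ge c(\ell,L)\rho$: it relies on \lref{poissonkernel} (and hence on the Lipschitz regularity of $a$) and on a subsolution-type comparison that must be adapted from minimizers to almost minimizers.
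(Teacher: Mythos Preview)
Your interior density argument and the overall structure of the exterior density argument (compare with the $a$-harmonic replacement $v$, use Poincar\'e, and show $v-u \gtrsim \rho$ on a sub-ball) match the paper's.  However, there is a real gap at the key step: the pointwise bound $v(x)\ge c(\ell,L)\rho$ cannot be obtained from \lref{poissonkernel} uniformly in $\rho$.  That lemma is stated only for radii $r\le r_0$ with a constant depending on $r_0$; if you rescale $B_\rho(x)$ to the unit ball the coefficients become $a(x+\rho\,\cdot)$, whose Lipschitz constant is $\rho\|\nabla a\|_\infty$, and the Hopf-lemma barrier proof of \lref{poissonkernel} degenerates as $\rho\to\infty$.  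Since the whole point here is large-scale regularity, this is exactly the regime you must cover.

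The paper resolves this by splitting scales.  For $\rho\ge r_0$ large it does \emph{not} try to control the Poisson kernel of the heterogeneous operator; instead it introduces the $\bar a$-harmonic replacement $\bar h$ of $v$ in $B_\rho$, uses the quantitative homogenization estimate \tref{quanthomregulardomain} to bound $\rho^{-2}\|v-\bar h\|_{L^2(B_\rho)}^2\le C\rho^{-\alpha}|B_\rho|$, and then applies the constant-coefficient Poisson kernel formula and Harnack inequality to $\bar h$ to get $\bar h\ge c\rho$ on $B_{\rho/2}$.  The homogenization error $C\rho^{-\alpha}$ is absorbed for $\rho\ge r_0$ universal.  For $\rho\le r_1$ small the paper instead compares with the $a(0)$-harmonic replacement, using $\|a-a(0)\|_{L^\infty(B_\rho)}\le C\rho^\gamma$; intermediate radii $r_1\le\rho\le r_0$ are handled trivially by monotonicity of $|\{u=0\}\cap B_\rho|$.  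Your argument would be complete once this two-scale mechanism is inserted in place of the direct appeal to \lref{poissonkernel}.
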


 The interior density only requires the Lipschitz and non-degenerate hypotheses so the conclusion is standard, the exterior density estimate does require the almost minimality property and some inputs from homogenization.

\begin{proof}[Proof of \pref{densityestimates}]
The inner density follows directly from non-degeneracy and the Lipschitz estimate, see for example \cite{CaffarelliSalsa}.

Now we consider the exterior density.  We follow a standard line of argument, see \cite[Lemma 5.1]{VelichkovNotes}, but we need to additionally use homogenized problem replacement at large scales and continuity of $a$ at small scales to apply the constant coefficient argument (Poisson kernel lower bound).  

Let $h$ be the $a$-harmonic replacement of $u$ in $B_r$.  Then
\begin{align*}
 \sigma |B_r|+\Lambda|\{u = 0\} \cap B_r| &\geq \int_{B_r} a(x) \grad u \cdot \grad u - a(x) \grad h \cdot \grad h \ dx \\
 &= \int_{B_r} a(x) \grad (u-h) \cdot \grad (u-h) \ dx \\
 &\geq \Lambda^{-1}\int_{B_r} |\grad (u-h)|^2 \ dx.
 \end{align*}
We will first argue for $r \geq r_0 \geq 1$ large enough but bounded by universal constants, then we will argue for $r \leq r_0$ at the end.  Let $\overline{h}$ be the $\overline{a}$-harmonic replacement of $h$ in $B_r$ then by \tref{quanthomregulardomain}
\[ r^{-2}\|h-\overline{h}\|^2_{L^2(B_r)} \leq Cr^{-\alpha}|B_r|. \]
By Poincar\'e inequality in $B_r$
\begin{align*}
\int_{B_r} |\grad (u-h)|^2 \ dx 
 &\geq \frac{\Lambda^{-1} c_d}{r^2} \int_{B_r} (h-u)^2 \ dx \\
 & \geq \frac{c}{r^2} \int_{B_r} (\overline{h} -u)^2 \ dx - \frac{c}{r^2} \int_{B_r} (\overline{h} -h)^2 \ dx\\
 & \geq \frac{c}{|B_r|}\left(\frac{1}{r}\int_{B_r} h-u \ dx\right)^2 - C r^{-\alpha}|B_r| .
 \end{align*} 
 By the non-degeneracy assumption of $u$ we know $\max_{\partial B_r} u \geq \ell r$.  By Lipschitz continuity $u \geq \frac{1}{2}\ell r$ in a $\frac{1}{2}\frac{\ell}{L}r$-neighborhood of the point where the maximum is achieved so
 \[ \int_{\partial B_r} u(x) dS \geq \frac{1}{2}\ell r\int_{\partial B_r} {\bf 1}_{\{u(x) \geq \frac{1}{2}\ell r\}} dS \geq c \tfrac{\ell^2}{L}r |\partial B_r| \] 
 By the Poisson kernel formula for $\overline{h}$, now dropping the $\ell$ and $L$ dependence of the constant,
 \[ \overline{h}(0) \geq c r. \]
and by Harnack inequality (recall $\overline{h}$ is harmonic up to a linear transformation)
\[ \overline{h}(x) \geq c \overline{h}(0) \geq c r \ \hbox{ in } \ x\in B_{r/2}.\]
The Lipschitz estimate of $u$ gives
 \[ u(x) \leq L|x| .\]
 Thus
 \[ u(x) - \overline{h}(x) \geq \frac{1}{2}c \ell r \ \hbox{ in } |x| \leq c\ell r/2L.\]
 Combining this with previous
 \begin{align*}
 \frac{1}{|B_r|}\left(\frac{1}{r}\int_{B_r} \overline{h}-u \ dx\right)^2 \geq \frac{1}{|B_r|}\left(\frac{1}{r}\int_{B_{c\ell r/2L}} \frac{1}{2}c \ell r  \ dx\right)^2 \geq 3c_0 |B_r|.
 \end{align*}
 For $r \geq r_0$ (universal) the term $C \mathcal{E}(\frac{1}{r}) \leq c_0$ so
 \[ \int_{B_r} |\grad (u-h)|^2 \ dx  \geq 2 c_0 |B_r|\]
 and for $\sigma \leq c_0$ as well we get finally
 \[ \Lambda|\{u = 0\} \cap B_r| \geq c_0 |B_r|.\]
 
 Next we argue for $r \leq r_0$.  Actually it is more convenient to argue for $r \leq r_1 \ll 1$ again a universal constant.  For intermediate values $r_1 \leq r \leq r_0$ we will just use a lower bound 
 \[ |\{u=0\} \cap B_r| \geq |\{u=0\} \cap B_{r_1}| \geq c_1|B_{r_1}| \geq c_1 (\frac{r_1}{r_0})^d|B_r|.\]
 The small $r_1$ case follows a very similar line of argument to the previous using the $a(0)$-harmonic replacement $\tilde{h}$ in $B_r$ (instead of $\overline{a}$-harmonic replacement) along with the estimate
 \[ \|h - \tilde{h}\|_{\underline{L}^2(B_r)}^2 \leq Cr^2\|\grad(h-\tilde{h})\|_{\underline{L}^2(B_r)}^2\leq Cr^2r^{2\gamma} |B_r|\]
 which follows from the standard energy estimate
 \[ \|\grad(h-\tilde{h})\|_{L^2(B_r)} \leq C\|a - a(0)\|_{L^\infty(B_r)}\|\grad \tilde{h}\|_{L^2(B_r)}\leq Cr^{\gamma}(\int_{\partial B_r}|u||\grad u| )^{1/2} \leq Cr^\gamma L|B_r|^{1/2}\]
 using that $a$ is $C^{0,\gamma}$.
 \end{proof}

%%      ---------------------------------------------------------------------
%%      --------------------------- BIBLIOGRAPHY ----------------------------
%%      ---------------------------------------------------------------------
%% PUT HERE THE BIBLIOGRAPHY IN YOUR FAVOURITE FORMAT
%% Please check that the format of the bibliography is uniform and coherent

  \bibliographystyle{plain}
\bibliography{fbhomreg-articles}

\begin{thebibliography}{10}

\bibitem{AleksanyanKuusi}
Gohar Aleksanyan and Tuomo Kuusi.
\newblock Quantitative homogenization for the obstacle problem and its free
  boundary.
\newblock {\em arXiv:2112.10879 [math.AP]}, 2021.

\bibitem{AltCaffarelli}
H.~W. Alt and L.~A. Caffarelli.
\newblock Existence and regularity for a minimum problem with free boundary.
\newblock {\em J. Reine Angew. Math.}, 325:105--144, 1981.

\bibitem{ApushkinskayaNazarov}
Darya~E. Apushkinskaya and Alexander~I. Nazarov.
\newblock On the boundary point principle for divergence-type equations.
\newblock {\em Atti Accad. Naz. Lincei Rend. Lincei Mat. Appl.},
  30(4):677--699, 2019.

\bibitem{AKM}
Scott Armstrong, Tuomo Kuusi, and Jean-Christophe Mourrat.
\newblock {\em Quantitative stochastic homogenization and large-scale
  regularity}, volume 352 of {\em Grundlehren der mathematischen Wissenschaften
  [Fundamental Principles of Mathematical Sciences]}.
\newblock Springer, Cham, 2019.

\bibitem{ArmstrongShen}
Scott~N. Armstrong and Zhongwei Shen.
\newblock Lipschitz estimates in almost-periodic homogenization.
\newblock {\em Comm. Pure Appl. Math.}, 69(10):1882--1923, 2016.

\bibitem{ArmstrongSmart}
Scott~N. Armstrong and Charles~K. Smart.
\newblock Quantitative stochastic homogenization of convex integral
  functionals.
\newblock {\em Ann. Sci. \'{E}c. Norm. Sup\'{e}r. (4)}, 49(2):423--481, 2016.

\bibitem{AvellanedaLin}
Marco Avellaneda and Fang-Hua Lin.
\newblock Compactness methods in the theory of homogenization.
\newblock {\em Comm. Pure Appl. Math.}, 40(6):803--847, 1987.

\bibitem{AvellanedaLinHO}
Marco Avellaneda and Fang-Hua Lin.
\newblock Un th\'{e}or\`eme de {L}iouville pour des \'{e}quations elliptiques
  \`a coefficients p\'{e}riodiques.
\newblock {\em C. R. Acad. Sci. Paris S\'{e}r. I Math.}, 309(5):245--250, 1989.

\bibitem{CaffarelliLee}
Luis Caffarelli and Ki~Ahm Lee.
\newblock Homogenization of oscillating free boundaries: the elliptic case.
\newblock {\em Comm. Partial Differential Equations}, 32(1-3):149--162, 2007.

\bibitem{CaffarelliSalsa}
Luis Caffarelli and Sandro Salsa.
\newblock {\em A geometric approach to free boundary problems}, volume~68 of
  {\em Graduate Studies in Mathematics}.
\newblock American Mathematical Society, Providence, RI, 2005.

\bibitem{CaffarelliIII}
Luis~A. Caffarelli.
\newblock A {H}arnack inequality approach to the regularity of free boundaries.
  {III}. {E}xistence theory, compactness, and dependence on {$X$}.
\newblock {\em Ann. Scuola Norm. Sup. Pisa Cl. Sci. (4)}, 15(4):583--602
  (1989), 1988.

\bibitem{CaffarellidelaLlave}
Luis~A. Caffarelli and Rafael de~la Llave.
\newblock Planelike minimizers in periodic media.
\newblock {\em Communications on Pure and Applied Mathematics},
  54(12):1403--1441, 2001.

\bibitem{DavidEngelsteinSVGToro}
Guy David, Max Engelstein, Mariana Smit Vega~Garcia, and Tatiana Toro.
\newblock Regularity for almost-minimizers of variable coefficient
  {B}ernoulli-type functionals.
\newblock {\em Math. Z.}, 299(3-4):2131--2169, 2021.

\bibitem{DavidEngelsteinToro}
Guy David, Max Engelstein, and Tatiana Toro.
\newblock Free boundary regularity for almost-minimizers.
\newblock {\em Adv. Math.}, 350:1109--1192, 2019.

\bibitem{DavidToro}
Guy David and Tatiana Toro.
\newblock Regularity of almost minimizers with free boundary.
\newblock {\em Calc. Var. Partial Differential Equations}, 54(1):455--524,
  2015.

\bibitem{deSilva}
Daniela De~Silva.
\newblock Free boundary regularity for a problem with right hand side.
\newblock {\em Interfaces Free Bound}, 13(2):223--238, 2011.

\bibitem{DeSilvaSavin}
Daniela De~Silva and Ovidiu Savin.
\newblock Almost minimizers of the one-phase free boundary problem.
\newblock {\em Comm. Partial Differential Equations}, 45(8):913--930, 2020.

\bibitem{Feldman}
William~M. Feldman.
\newblock Limit shapes of local minimizers for the alt-{C}affarelli energy
  functional in inhomogeneous media.
\newblock {\em Arch. Ration. Mech. Anal.}, 240(3):1255--1322, 2021.

\bibitem{FeldmanShape}
William~M. Feldman.
\newblock Quantitative homogenization of principal dirichlet eigenvalue shape
  optimizers.
\newblock {\em arXiv:2209.01446 [math.AP]}, 2022.

\bibitem{FeldmanSmart}
William~M. Feldman and Charles~K. Smart.
\newblock A free boundary problem with facets.
\newblock {\em Arch. Ration. Mech. Anal.}, 232(1):389--435, 2019.

\bibitem{MasmoudiGerardVaret}
David G\'{e}rard-Varet and Nader Masmoudi.
\newblock Homogenization and boundary layers.
\newblock {\em Acta Math.}, 209(1):133--178, 2012.

\bibitem{JerisonSavin}
David Jerison and Ovidiu Savin.
\newblock Some remarks on stability of cones for the one-phase free boundary
  problem.
\newblock {\em Geom. Funct. Anal.}, 25(4):1240--1257, 2015.

\bibitem{Kim}
Inwon~C. Kim.
\newblock Homogenization of a model problem on contact angle dynamics.
\newblock {\em Comm. Partial Differential Equations}, 33(7-9):1235--1271, 2008.

\bibitem{Moser}
J\"{u}rgen Moser.
\newblock Minimal solutions of variational problems on a torus.
\newblock {\em Ann. Inst. H. Poincar\'{e} Anal. Non Lin\'{e}aire},
  3(3):229--272, 1986.

\bibitem{MoserStruwe}
J\"{u}rgen Moser and Michael Struwe.
\newblock On a {L}iouville-type theorem for linear and nonlinear elliptic
  differential equations on a torus.
\newblock {\em Bol. Soc. Brasil. Mat. (N.S.)}, 23(1-2):1--20, 1992.

\bibitem{Shen}
Zhongwei Shen.
\newblock {\em Periodic homogenization of elliptic systems}, volume 269 of {\em
  Operator Theory: Advances and Applications}.
\newblock Birkh\"{a}user/Springer, Cham, 2018.
\newblock Advances in Partial Differential Equations (Basel).

\bibitem{Trey}
Baptiste Trey.
\newblock Lipschitz continuity of the eigenfunctions on optimal sets for
  functionals with variable coefficients.
\newblock {\em ESAIM: COCV}, 26:89, 2020.

\bibitem{Valdinoci}
Enrico Valdinoci.
\newblock Plane-like minimizers in periodic media: jet flows and
  {G}inzburg-{L}andau-type functionals.
\newblock {\em J. Reine Angew. Math.}, 574:147--185, 2004.

\bibitem{VelichkovNotes}
Bozhidar Velichkov.
\newblock {\em Regularity of the One-phase Free Boundaries}.
\newblock Lecture Notes of the Unione Matematica Italiana. Springer, Cham,
  2023.

\end{thebibliography}

\end{document}